\numberwithin{equation}{section}
\newcommand{\orcid}[1]{\href{https://orcid.org/#1}{\textsc{orc}i\textsc{d}}}
\title[]{Hyperbolic entropy for harmonic measures on singular holomorphic foliations}
\author{Fran\c cois Bacher}
\address{Universit\'e de Lille, 
Laboratoire de math\'ematiques Paul Painlev\'e, 
CNRS U.M.R. 8524,  
59655 Villeneuve d'Ascq Cedex, 
France.}
\email{francois.bacher@univ-lille.fr}
\date{\today}
\keywords{Singular holomorphic foliation; Hyperbolic entropy; Ergodic Theory; Poincar\'{e} metric; Harmonic measures} 
\def\restriction#1#2{\mathchoice
              {\setbox1\hbox{${\displaystyle #1}_{\scriptstyle #2}$}
              \restrictionaux{#1}{#2}}
              {\setbox1\hbox{${\textstyle #1}_{\scriptstyle #2}$}
              \restrictionaux{#1}{#2}}
              {\setbox1\hbox{${\scriptstyle #1}_{\scriptscriptstyle #2}$}
              \restrictionaux{#1}{#2}}
              {\setbox1\hbox{${\scriptscriptstyle #1}_{\scriptscriptstyle #2}$}
              \restrictionaux{#1}{#2}}}
\def\restrictionaux#1#2{{#1\,\smash{\vrule height .8\ht1 depth .85\dp1}}_{\,#2}}
\theoremstyle{plain}
\newtheorem{thm}{Theorem}[section]
\newtheorem{lem}[thm]{Lemma}
\newtheorem{prop}[thm]{Proposition}
\newtheorem{cor}[thm]{Corollary}
\newtheorem*{thm*}{Theorem}
\newtheorem*{conj*}{Conjecture}
\theoremstyle{definition}
\newtheorem{defn}[thm]{Definition}
\newtheorem*{exmp*}{Example}
\theoremstyle{remark}
\DeclareMathOperator{\diam}{diam}
\DeclareMathOperator{\Leb}{Leb}
\DeclareMathOperator{\sing}{sing}
\DeclareMathOperator{\Vol}{Vol}
\DeclareMathOperator{\Aut}{Aut}
\newcommand{\cjg}[1]{\overline{#1}}
\newcommand{\adh}[1]{\overline{#1}}
\newcommand{\PC}{P}
\newcommand{\eps}{\varepsilon}
\newcommand{\fol}{\mathscr{F}}
\newcommand{\leafatlas}{\mathscr{L}}
\newcommand{\nsplfol}{\left(\mani{M},\leafatlas\right)}
\newcommand{\plfol}{\left(\mani{M},\leafatlas,\mani{E}\right)}
\newcommand{\ddc}{dd^{\text{c}}}
\newcommand{\set}[1]{\mathbb{#1}}
\newcommand{\leaf}{L}
\newcommand{\leafu}[1]{\leaf_{#1}}
\newcommand{\norm}[1]{\left\Vert#1\right\Vert}
\newcommand{\intcc}[2]{\left[#1,#2\right]}
\newcommand{\intoo}[2]{\left(#1,#2\right)}
\newcommand{\intco}[2]{\left[#1,#2\right)}
\newcommand{\intent}[2]{\left\llbracket#1,#2\right\rrbracket}
\newcommand{\Cmod}[1]{\left\vert#1\right\vert}
\newcommand{\proj}[1]{\set{P}^{#1}}
\newcommand{\mani}[1]{#1}
\newcommand{\manis}[2]{\mani{#1}\setminus\mani{#2}}
\newcommand{\dhimpsing}[2]{\dhimpnov(#2,\mani{#1})}
\newcommand{\rD}[1]{#1\set{D}}
\newcommand{\DR}[1]{\set{D}_{#1}}
\newcommand{\adhDR}[1]{\adh{\set{D}}_{#1}}
\newcommand{\wo}[2]{#1\backslash#2}
\newcommand{\dhimp}[2]{\dhimpnov(#1,#2)}
\newcommand{\dhermfnov}[1]{d_{\leafu{#1}}}
\newcommand{\dhermf}[3]{\dhermfnov{#1}(#2,#3)}
\newcommand{\dhimpnov}{d}
\newcommand{\dhimpsnov}[1]{\dhimpnov_{#1}}
\newcommand{\dhimps}[3]{\dhimpsnov{#1}(#2,#3)}
\newcommand{\dPC}[2]{\dPCnov(#1,#2)}
\newcommand{\dPCp}[2]{\dPCnov\left(#1,#2\right)}
\newcommand{\dPCs}[3]{d_{\PC,#1}(#2,#3)}
\newcommand{\dPCnov}{d_{\PC}}
\newcommand{\metPC}{g_{\PC}}
\newcommand{\metm}[1]{g_{\mani{#1}}}
\newcommand{\textfol}{holomorphic foliation}
\newcommand{\textsingfol}{singular \textfol{}}
\newcommand{\textsingfols}{\textsingfol{}s}
\newcommand{\wt}[1]{\widetilde{#1}}
\newcommand{\foldPnC}[2]{\fol_{#1}(\proj{#2})}
\begin{document}

\theoremstyle{plain}

\begin{abstract}  Let $\fol=\plfol$ be a Brody-hyperbolic \textsingfol{} on a compact complex manifold~$\mani{M}$. Suppose that~$\fol$ has isolated singularities and that its Poincar\'e metric is complete. This is the case for a very large class of singularities, namely, non-degenerate and saddle-nodes in dimension~$2$. Let~$\mu$ be an ergodic harmonic measure on~$\fol$. We show that the upper and lower local hyperbolic entropies of~$\mu$ are leafwise constant almost everywhere. Moreover, we show that the entropy of~$\mu$ is at least~$2$.
  \end{abstract}

\maketitle

\section{Introduction}

The dynamical theory for laminations by Riemann surfaces has recently received much attention. In particular, a lot of progress have been focused on the case of hyperbolic leaves. For singular holomorphic foliations on~$\proj{n}$, this is a typical setup. Indeed, every polynomial vector field on~$\set{C}^n$ induces a \textsingfol{} that can be compactified onto~$\proj{n}$. This foliation is always singular. For $d,n\in\set{N}$ with $n\geq2$, denote by $\foldPnC{d}{n}$ the space of \textsingfols{} of degree~$d$ on~$\proj{n}$. As is shown by Lins~Neto and Soares~\cite{LNS}, a generic foliation $\fol\in\foldPnC{d}{n}$ has only non-degenerate singularities. This result is based on a previous analysis of the Jouanolou foliations~\cite{Jou}. By the results of Lins~Neto~\cite{LN,LN2} and Glutsyuk~\cite{Glu}, such a foliation is hyperbolic if $d\geq2$ and even Brody-hyperbolic in the sense of~\cite{DNSII}. In the case $n=2$, Nguy\^{e}n also uses the integrability of the holonomy cocycle~\cite{Nguholo} to compute the Lyapunov exponent~\cite{NguLyap} of a generic foliation in $\foldPnC{d}{2}$. We refer the reader to the survey articles~\cite{surDinhSib,surForSib,surVANG18,surVANG21} for more details about these questions.

Solving heat equations with respect to a harmonic current, Dinh, Nguy\^{e}n and Sibony prove in~\cite{DNS12} ergodic theorems for laminations. In particular, they obtain an effective and geometric analog of Birkhoff's Theorem. Somehow, the comparison relies on considering the time to be the hyperbolic distance in a uniformization of a leaf by the Poincar\'e disk~$\set{D}$. Therefore, in a series of two articles~\cite{DNSI,DNSII}, the three authors study the modulus of continuity of the Poincar\'e metric of compact laminations and of \textsingfols{} with linearizable singularities. They also introduce various notions of entropy for hyperbolic laminations. These concepts, together with Lyapunov exponents~\cite{NguOse}, could bring a fruitful bridge between the dynamics of foliations and the dynamics of maps. Our works~\cite{Bac1,Bac2,Bac3} are devoted to generalize some of their results about the Poincar\'e metric, the heat diffusions and the topological entropy to foliations with non-degenerate singularities. Here, we are interested in the measure-theoretic hyperbolic entropy.

Let us introduce some notations to state our results. Let $\fol=\plfol$ be a \textsingfol{}. Here, $\mani{M}$ is a compact complex manifold, $\leafatlas$ is a foliated atlas of~$\manis{M}{E}$ and~$\mani{E}$ is the singular set of~$\fol$. Suppose that the leaves of~$\fol$ are hyperbolic Riemann surfaces and for $x\in\manis{M}{E}$, fix $\phi_x\colon\set{D}\to\leafu{x}$ a uniformization of the leaf~$\leafu{x}$. Such a map is unique up to precomposing by a rotation. Let~$\metm{M}$ be a Hermitian metric on~$\mani{M}$ and denote by~$\dhimpnov$ the induced distance on~$\mani{M}$. Since we consider the hyperbolic distance in~$\set{D}$ to be a time parametrizing~$\leafu{x}$, 
\[\label{defBowenintro}d_R(x,y)=\inf_{\theta\in\set{R}}\sup_{\xi\in\adhDR{R}}\dhimp{\phi_x(\xi)}{\phi_y(e^{i\theta}\xi)},\quad x,y\in\manis{M}{E},\]
for $R>0$, can be thought as a \emph{Bowen distance} up to time~$R$~\cite[pp.~581--582]{DNSI}. Here,~$\DR{R}$ denotes the disk of center~$0$ and hyperbolic radius~$R$. The infimum over~$\theta$ means that we consider distances between the two closest parametrizations of the leaves~$\leafu{x}$ and~$\leafu{y}$. That is, we consider the distance up to reparametrization. Therefore, we obtain a notion of Bowen balls and of \emph{topological entropy} $h(\fol)$, which is a number that measures heuristically the exponential growth with~$R$ of the amount of Bowen balls needed to cover~$\manis{M}{E}$. The more precise definition of $h(\fol)$ will be given in Section~\ref{secdefent}. There are some cases for which it is known the topological entropy is finite. Namely, for smooth compact laminations without singularities~\cite{DNSI} and foliations with only non-degenerate singularities on compact complex surfaces~\cite{DNSII,Bac3}. Here, we show a lower bound for the entropy.

If moreover~$\fol$ is endowed with a harmonic measure~$\mu$, one can ask more weakly how many balls are needed to cover a subset of measure at least $1-\delta$, for $\delta\to0$, and we obtain a \emph{measure-theoretic entropy} $h(\mu)$. It is clear that $h(\mu)\leq h(\fol)$. To study these numbers, Dinh, Nguy\^{e}n and Sibony~\cite{DNSI} introduce other notions of entropy, and in particular \emph{local upper and lower entropies} $h^{\pm}(\mu,x)$, that measure the exponential decay with $R$ of the $\mu$-measure of the Bowen balls centered in~$x$. In the case without singularities, they can show the following.

\begin{thm}[Dinh--Nguy\^{e}n--Sibony~{\cite[Theorem~4.2]{DNSI}}]\label{nshpmconstant} Let $\fol=\nsplfol{}$ be a smooth compact lamination by hyperbolic Riemann surfaces and~$\mu$ be a harmonic measure on~$\fol$. Then, the local upper and lower entropies $x\mapsto h^{\pm}(\mu,x)$ are leafwise constant. In particular, if~$\mu$ is ergodic, they are constant $\mu$-almost everywhere and we denote them by $h^{\pm}(\mu)$. Moreover, $h(\mu)\geq2$.
\end{thm}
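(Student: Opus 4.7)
The plan proceeds in three phases: leafwise constancy of the local entropies, ergodic reduction, and a Poincar\'e-area estimate on leafwise Bowen balls that yields $h(\mu)\geq 2$.

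\emph{Phase 1 (leafwise constancy).} For $y\in\leafu{x}$, write $y=\phi_x(a)$ and choose $\phi_y=\phi_x\circ M_a$, where $M_a\in\Aut(\set{D})$ sends $0$ to $a$; set $s:=d_{\set{D}}(0,a)$. Since $M_a$ is a hyperbolic isometry, it maps $\adhDR{R}$ onto the hyperbolic ball of radius $R$ around $a$, which is sandwiched between $\adhDR{R-s}$ and $\adhDR{R+s}$. Substituting $\phi_y=\phi_x\circ M_a$ in the definition of $d_R$ yields a comparison between $B_R(y,\epsilon)$ and $B_{R\pm s}(x,\epsilon)$ that differs only by a bounded shift in $R$, up to a controlled change of $\epsilon$ coming from uniform continuity of the Hermitian metric $\metm{M}$. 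Dividing $-\log\mu$ by $R$, this shift becomes negligible as $R\to\infty$, so $h^{\pm}(\mu,x)=h^{\pm}(\mu,y)$. Since the Borel function $x\mapsto h^{\pm}(\mu,x)$ is leafwise constant, ergodicity of $\mu$ forces it to be $\mu$-almost everywhere constant.

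\emph{Phase 2 (lower bound $h(\mu)\geq 2$).} The goal is the estimate
\[\mu(B_R(x,\epsilon))\leq C_\epsilon\, e^{-2R}\]
for $\mu$-a.e.\ $x$, which gives $h^{-}(\mu,x)\geq 2$ and hence $h(\mu)\geq 2$ by the standard covering argument: if every Bowen ball has $\mu$-mass $\lesssim e^{-2R}$, at least $e^{2R}$ of them are needed to cover a set of fixed positive measure. For each leaf $\leafu{y}$ meeting $B_R(x,\epsilon)$, the triangle inequality for $d_R$ --- valid because $d_R$ is preserved under common rotations of parametrizations --- bounds the $d_R$-diameter of the trace $\leafu{y}\cap B_R(x,\epsilon)$ by $2\epsilon$. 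Applying the M\"obius argument of Phase 1 internally on $\leafu{y}$, every $\eta\in\set{D}$ whose image lies in the trace must satisfy $\Cmod{\eta}\lesssim\epsilon e^{-R}$: indeed, at a boundary point $\xi\in\partial\DR{R}$ with $1-\Cmod{\xi}\sim e^{-R}$, a direct computation yields $d_{\set{D}}(\xi,M_\eta(\xi))\gtrsim\Cmod{\eta}\,e^R$. Since a hyperbolic disk of small radius $r$ has Poincar\'e area of order $r^2$, the leafwise trace has Poincar\'e area $O(\epsilon^2 e^{-2R})$, the exponent $2$ reflecting the complex dimension $1$ of the leaves. Since $d_R\geq d$ (the ambient Hermitian distance), for $\epsilon$ small enough $B_R(x,\epsilon)$ lies in a flow box $U\ni x$; disintegrating the harmonic measure on $U$ as an integral of harmonic densities against the Poincar\'e area on plaques and a transverse measure, and using Harnack's inequality to control the densities on the leafwise Bowen ball slices, turns the total mass $\mu(B_R(x,\epsilon))$ into a uniformly bounded transverse integral of leafwise areas, each of order $\epsilon^2 e^{-2R}$.

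The main obstacle is the leafwise Poincar\'e-area estimate of Phase 2: the M\"obius deformation $M_\eta$ must be analyzed uniformly on $\partial\DR{R}$, and the infimum over rotations $e^{i\theta}$ in the definition of $d_R$ must be shown not to improve the bound beyond an absorbable constant. The transverse disintegration of $\mu$ and the Harnack comparison are standard in the smooth compact setting where harmonic densities are uniformly well-behaved; their adaptation to singular foliations --- the main theme of the rest of this paper --- is significantly more delicate.
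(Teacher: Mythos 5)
The statement is cited from Dinh--Nguy\^en--Sibony~\cite[Theorem~4.2]{DNSI} without a reproduced proof, so the natural comparison is with the DNS strategy that the paper summarizes and extends: prove leafwise constancy for the \emph{transversal} entropies $\wt{h}^{\pm}$ (using quasi-invariance of $\mu$ at the transverse level), then prove the plaquewise identity $h^{\pm}=\wt{h}^{\pm}+2$. You instead try to prove leafwise constancy of $h^{\pm}$ directly by a M\"obius comparison, and this is where there is a genuine gap. Substituting $\phi_y=\phi_x\circ M_a$ into $d_R(y,z)$ and changing variable $\xi'=M_a\xi$ turns the rotation $e^{i\theta}$ applied to $\phi_z$ into the automorphism $e^{i\theta}M_a^{-1}$, which is \emph{not} a rotation: it sends $0$ to $-e^{i\theta}a$. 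Hence $z\in B_R(y,\eps)$ does not yield $z\in B_{R-s}(x,\eps)$; it yields that the leafwise companion $z':=\phi_z(-e^{i\theta}a)$, a point of $\leafu{z}$ at Poincar\'e distance $s=d_{\set{D}}(0,a)$ from $z$, lies in $B_{R-s}(x,\eps)$. The Bowen balls around $x$ and $y$ are therefore related by a $z$-dependent leafwise translation, not by a set inclusion up to a shift of $R$. To convert this into a comparison of $\mu$-masses you must control how $\mu$ transforms under such translations, which is precisely the quasi-invariance encoded by the disintegration~\eqref{desintmu} and the Harnack inequality for the leafwise densities $f_t$. Your phrase ``a controlled change of $\eps$ coming from uniform continuity of the Hermitian metric'' addresses a metric issue, not this measure-theoretic one, so Phase~1 as written does not close.

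This gap also explains why the DNS route goes through $\wt{h}^{\pm}$: the transverse measure $\nu$ simply does not see leafwise shifts, so leafwise constancy of $\wt{h}^{\pm}$ is essentially free, and the full harmonic-density contribution is isolated into the single plaquewise estimate $h^{\pm}=\wt{h}^{\pm}+2$. Your Phase~2 is the ``upper half'' of that plaquewise estimate ($\mu(B_R(x,\eps))\lesssim\eps^2e^{-2R}$), and the outline --- triangle inequality for $d_R$, the M\"obius displacement $d_{\set{D}}(\xi,M_\eta\xi)\gtrsim\Cmod{\eta}e^R$ near $\partial\DR{R}$, the intermediate-value argument to pass from Poincar\'e displacement to ambient displacement inside a flow box, disintegration and Harnack --- is the standard one and is sound in the smooth compact setting. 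But note two further loose ends: (i) the conclusion $h(\mu)\geq 2$ via ``at least $e^{2R}$ balls are needed'' requires the constant $C_\eps$ to be uniform on a set of positive measure (a point the compactness of $\mani{M}$ makes available but which you do not establish), and (ii) if you instead invoke $h^-(\mu)\leq h(\mu)$ (Theorem~\ref{orderentropies}), that route presupposes the $\mu$-a.e.\ constancy of $h^{\pm}$ and therefore inherits the Phase~1 gap.
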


Here, we improve their results for \textsingfols{}. 

\begin{thm}\label{mainthm}  Let $\fol=\plfol$ be a Brody-hyperbolic singular holomorphic foliation on a compact complex manifold~$\mani{M}$. Let~$\mu$ be an ergodic harmonic measure on~$\fol$. Suppose that the singularities of~$\fol$ are isolated and that the Poincar\'e metric of~$\fol$ is complete. Then, $x\mapsto h^{\pm}(\mu,x)$ are constant $\mu$-almost everywhere. Moreover, $h(\mu)\geq2$.
\end{thm}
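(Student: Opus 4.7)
The plan is to adapt the proof of Theorem~\ref{nshpmconstant} by Dinh--Nguy\^en--Sibony to the singular setting, invoking the completeness of the Poincar\'e metric and the regularity results near non-degenerate and saddle-node singularities to transfer the geometric estimates from the smooth compact case to sets of nearly full $\mu$-measure.

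For the leafwise constancy of $h^{\pm}(\mu,\cdot)$, fix $x\in\manis{M}{E}$ and $y=\phi_x(w_0)$ on the same leaf. With the normalization $\phi_y=\phi_x\circ T_{w_0}$, where $T_{w_0}\in\Aut(\set{D})$ sends~$0$ to~$w_0$, the change of variable $\eta=T_{w_0}\xi$ in the definition of $d_R(y,\cdot)$ replaces $\adhDR{R}$ by the hyperbolic disk of radius~$R$ centered at~$w_0$, which differs from $\adhDR{R}$ only on a region of bounded hyperbolic width $r_0=d_{\set{D}}(0,w_0)$. A careful bookkeeping in the spirit of~\cite{DNSI} gives $|\log\mu(B_R(x,\eps))-\log\mu(B_R(y,\eps))|=O_{\eps,w_0}(1)$ as $R\to\infty$, so that $h^{\pm}(\mu,x)=h^{\pm}(\mu,y)$. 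Ergodicity of~$\mu$ then promotes this leafwise constancy into $\mu$-a.e.\ constancy.

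For the lower bound $h(\mu)\geq2$, I would establish the pointwise inequality $h^{-}(\mu,x)\geq2$ for $\mu$-a.e.~$x$ and conclude via the Brin--Katok-type inequality of~\cite{DNSI}. This reduces to showing $\mu(B_R(x,\eps))\leq C_\eps\,e^{-2R}$ for~$x$ in a set of full measure. In a flow box disjoint from~$\mani{E}$, harmonicity of~$\mu$ yields a decomposition $\mu=\int h_t\,\omega_t\,dm(t)$ with~$h_t$ harmonic on the plaque and~$\omega_t$ a leafwise area form. The intersection of $B_R(x,\eps)$ with each leaf is, up to the single rotation fixed by the $\inf_\theta$ in the definition of $d_R$, contained in a tube around the orbit parametrized by $\adhDR{R}$ whose Euclidean area near the boundary of~$\set{D}$ shrinks like~$e^{-2R}$, the factor~$2$ reflecting the real dimension of the leaf. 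Integrating against the transverse measure then delivers the bound.

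The main obstacle will be to control the geometry and the harmonic densities near the singular set~$\mani{E}$. The uniformizations~$\phi_x$ may bring $\adhDR{R}$ arbitrarily close to~$\mani{E}$, where the Poincar\'e metric degenerates and flow box coordinates break down. Here, the completeness of the Poincar\'e metric is essential: it guarantees that no plaque reaches~$\mani{E}$ in finite Poincar\'e time, and combined with the quantitative regularity of the Poincar\'e metric near non-degenerate and saddle-node singularities from~\cite{DNSII,Bac1,Bac2,Bac3}, it enables uniform estimates on compact sets exhausting~$\manis{M}{E}$. Concretely, I would exhaust~$\manis{M}{E}$ by compact sets~$K_n$ with $\mu(K_n)\to1$ on which the needed estimates are uniform, and apply the ergodic theorem for harmonic measures of~\cite{DNS12} to the indicator of~$K_n$ to show that a $\mu$-typical leaf spends the correct asymptotic proportion of hyperbolic time in~$K_n$. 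The smooth-case arguments then apply on those time intervals, and excursions near~$\mani{E}$ contribute at most $o(R)$, which is absorbed in the entropy asymptotics.
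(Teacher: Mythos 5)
Your high-level plan — reduce to the non-singular case, use the ergodic theorem of~\cite{DNS12} to control the hyperbolic time a typical leaf spends near~$\mani{E}$, and absorb the excursions into the asymptotics — is the right starting point, but the proposal has two genuine gaps where the hard work is hidden.

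First, the direct change-of-variables argument for leafwise constancy of~$h^{\pm}$ does not go through. If $y=\phi_x(w_0)$ and $\phi_y=\phi_x\circ T_{w_0}$, substituting $\xi'=T_{w_0}\xi$ gives
\[d_R(y,z)=\inf_\theta\sup_{\xi'\in T_{w_0}(\adhDR{R})}\dhimp{\phi_x(\xi')}{\phi_z(e^{i\theta}T_{w_0}^{-1}\xi')},\]
and the reparametrization $e^{i\theta}T_{w_0}^{-1}$ is a general automorphism of~$\set{D}$, not a rotation; so this is not a Bowen ball centered at~$x$ in the sense of $d_{R'}$. The claim $\lvert\log\mu(B_R(x,\eps))-\log\mu(B_R(y,\eps))\rvert=O_{\eps,w_0}(1)$ thus requires controlling $\phi_x\circ\tau$ against $\phi_x$ for general $\tau\in\Aut(\set{D})$, uniformly over~$\adhDR{R}$ — which is exactly the plaquewise analysis that is hard in the singular setting. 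This is why both~\cite{DNSI} and the paper instead prove leafwise constancy of the transversal entropies $\wt{h}^{\pm}$ (which only concern the transverse measure~$\nu$, hence change trivially under a leafwise reparametrization) and then establish $h^{\pm}=\wt{h}^{\pm}+2$ $\mu$-a.e.; the resulting conclusion is only $\mu$-a.e.\ constancy, not the stronger leafwise constancy you assert.

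Second, and more seriously, the proof of $\mu(B_R(x,\eps))\leq C_\eps e^{-2R}$ skips the key combinatorial estimate. The plaquewise slice $B_R(x,\eps)\cap(\set{D}\times\{t\})$ is not a tube or a single Euclidean disk of radius~$e^{-R}$ once singularities are present: during each excursion of $\phi_x(\adhDR{R})$ near~$\mani{E}$, the uniformization can bring hyperbolically distant points $\mani{M}$-close, so the plaquewise Bowen ball can split into many Euclidean components. Merely knowing that the total hyperbolic time near~$\mani{E}$ is $o(R)$ does not by itself bound the number or total area of these components; an a priori bound like $e^{CR}$ components, each of area~$e^{-2R}$, would be useless. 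The paper closes this gap by showing that the plaquewise Bowen ball is a \emph{set with prescribed steps}: distances between its points must fall into a union of $N$ small intervals indexed by the excursion times $(R_{i,1},R_{i,2})$, and Lemma~\ref{mesepspresc} then bounds the Lebesgue measure by $\eps_{N,2}^n\cdot3^{nN}\prod_i(\eps_{i,2}/\eps_{i,1})^n$, which the Birkhoff control turns into $e^{-2R+O(\delta R)}$. Without this step (or a substitute for it), the claim that "excursions ... are absorbed in the entropy asymptotics" is not justified. The lower bound $\mu(B_R(x,\eps))\gtrsim e^{-2R}$ in your sketch is the easy direction; the upper bound is the content of Lemmas~\ref{estimLebtheta} and~\ref{estimLebzeta}, and that is what your proposal is missing.
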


As we show in Section~\ref{secdefent}, if~$\fol$ is Brody-hyperbolic, the assumption on the Poincar\'e metric is only local near the singularities. Therefore, by local works of \mbox{Canille}~\mbox{Martins} and Lins~Neto~\cite{MLN}, our result can be applied to a large class of singularities.

\begin{cor}\label{maincor}  Let $\fol=\plfol$ be a Brody-hyperbolic singular holomorphic foliation on a compact complex manifold~$\mani{M}$. Let~$\mu$ be an ergodic harmonic measure on~$\fol$. Suppose that all the singularities $p\in\mani{E}$ of~$\fol$ are of one of the following type.
  \begin{enumerate}
  \item In a chart centered at~$p$, $\fol$ is generated by a vector field $X=\sum_{j=k}^{+\infty}X_j$, with the $X_j$ homogeneous vector fields of degree $j$, for $j\geq k$, and $X_k$ admitting an isolated singularity. In particular, if $k=1$,~$p$ is a non-degenerate singularity.
  \item The point~$p$ is a saddle-node singularity of $\fol$ (in ambient dimension~$2$).
  \end{enumerate}
  Then, the conclusions of Theorem~\ref{mainthm} hold.
\end{cor}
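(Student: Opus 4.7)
The plan is to reduce Corollary~\ref{maincor} to Theorem~\ref{mainthm} by verifying its two non-obvious hypotheses (isolation of singularities and completeness of the Poincar\'e metric) for each of the two classes of singularities. Brody-hyperbolicity and the existence of the ergodic harmonic measure~$\mu$ are given directly.

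The first step is to check that every singular point $p\in\mani{E}$ of each listed type is isolated. In case~(1), this is part of the assumption: by hypothesis, the leading homogeneous part~$X_k$ of the vector field~$X$ generating~$\fol$ near~$p$ has an isolated singularity at~$0$, and a standard local argument then shows that $p$ is isolated for~$X$ itself (a non-isolated singularity would force, after blowing up, a full branch along which~$X_k$ vanishes, contradicting its isolation). In case~(2), saddle-nodes in ambient dimension~$2$ are isolated by definition. Hence, in both cases, all singularities are isolated, so the set~$\mani{E}$ is finite because~$\mani{M}$ is compact.

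The second step is completeness of the Poincar\'e metric of~$\fol$. Here I would invoke the announcement in the paragraph following Theorem~\ref{mainthm}: since~$\fol$ is Brody-hyperbolic, the Poincar\'e metric is automatically complete away from any compact neighborhood of~$\mani{E}$, so completeness is a purely local question at each $p\in\mani{E}$. For this local part, I would quote the results of Canille~Martins and Lins~Neto~\cite{MLN}: they show that the Poincar\'e metric of a leaf accumulating at a singularity of the quasi-homogeneous type in~(1), respectively at a saddle-node in dimension~$2$ in~(2), is complete near~$p$ (the leaf takes infinite hyperbolic time to reach~$p$). Gluing the local completeness around each of the finitely many singularities with the global completeness outside a neighborhood of~$\mani{E}$ yields completeness of the Poincar\'e metric of~$\fol$ on all of $\manis{M}{E}$.

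With both hypotheses verified, Theorem~\ref{mainthm} applies and gives the two conclusions: the local hyperbolic entropies $x\mapsto h^{\pm}(\mu,x)$ are constant $\mu$-a.e.\ and $h(\mu)\geq 2$. The main (and essentially only) obstacle is the second step, i.e.\ making precise the local completeness of the Poincar\'e metric at each type of singularity. For case~(1) one must carefully interpret the quasi-homogeneous result of~\cite{MLN} in a form that applies uniformly to all leaves approaching~$p$, including the separatrices when they exist; for case~(2), the analysis of the strong and weak manifolds of a saddle-node leads to the classical computation showing that leaves along the central direction leave every compact subset in infinite Poincar\'e time. Once these local estimates are cited, the reduction to Theorem~\ref{mainthm} is immediate.
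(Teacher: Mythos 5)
Your proposal is correct and follows essentially the same route as the paper: reduce to Theorem~\ref{mainthm} by checking isolation of the singularities and completeness of the Poincar\'e metric, quoting Canille~Martins--Lins~Neto~\cite{MLN} (the paper's Theorem~\ref{thmCMLN}) for the local completeness at each singularity, and passing from local to global completeness via Brody-hyperbolicity and compactness (the paper makes this precise in Lemma~\ref{metPCcompletelocimpglob}, which compares $\eta$ with the local $\eta_a$). The only small difference is that you spell out the (easy, standard) verification that each singularity type is isolated, which the paper leaves implicit.
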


Let us explain the method of our proof. First, we follow the idea of Dinh, Nguy\^{e}n and Sibony of considering transversal entropies $\wt{h}^{\pm}(\mu,x)$. The main advantage is that it is not so difficult to show that they are leafwise constant using the quasi-invariance of~$\mu$. As in the non-singular case, we reduce our problem to proving $h^{\pm}=\wt{h}^{\pm}+2$. Somehow, this means showing that the plaquewise entropy is always~$2$. That is, we have to determine the size of the intersection of a Bowen ball with a plaque. In the non-singular case, estimates on the distance between the identity and $\tau\in\Aut(\set{D})$ on~$\DR{R}$ show that this intersection is essentially a disk of radius $\simeq e^{-R}$. Indeed, if there is a~$\xi\in\DR{R}$ such that $\dPC{\xi}{\tau(\xi)}$ is big, then there are intermediate points where it is not so big to be in the same plaque, and still bigger than~$\eps$. In that case, the Poincar\'e distance and the distance in~$\mani{M}$ are comparable. This simple argument does not work anymore in our context because of the singularities. When the above distance is not so big, we could be near the singularity. In that case, we can not conclude anything. On the other hand, when we are outside neighbourhoods of the singularities, the Poincar\'e distance between two points of the same leaf could be big and the distance in~$\mani{M}$ very small if they do not belong to the same plaque.

Let us describe how we overcome this difficulty. First, we use \mbox{Dinh}, \mbox{Nguy\^{e}n} and \mbox{Sibony}'s \mbox{Birkhoff} type Theorem~\cite{DNS12} to control the time spent near the singularities. Next, we prove thorough estimates on the distance between the identity and an automorphism~$\tau$ of~$\set{D}$ and find a large amount of points where the distances between them is big enough but not too big. Then, this enables to show that the distances between two points of the intersection of a Bowen ball with a plaque correspond in some sense to large subsets of the disk which are spent near the singular set. Since this time is small, this should not happen often. More precisely, we show somehow that this intersection contains at most~$e^{\delta R}$ disks of radius $e^{-R}$, for~$\delta$ comparable to the proportion of time spent near the singular set. Since this~$\delta$ can be chosen arbitrarily small, we have our result. The last argument relies on studying what we call \emph{subsets of $\set{R}^n$ with prescribed steps}. That is, subsets for which it is known that the distance between any two points is in some small union of intervals. These look like what we obtain when constructing the Cantor ternary set by induction.

The article is organized as follows. In Section~\ref{secdefent}, we begin by recalling some basic facts about the Poincar\'e metric. We show that Corollary~\ref{maincor} is indeed a consequence of Theorem~\ref{mainthm}. Then, we define the different entropies we shall use. We recall some results of the three authors to reduce to studying the plaquewise entropy and showing Proposition~\ref{hpm=hpmT+2}. In Section~\ref{secproof}, we prove this proposition. We begin by reducing to two lemmas that will allow to work in one single plaque. Next, we study briefly subsets of $\set{R}^n$ with prescribed steps. We finish by introducing a fine geometric setup and proving the two lemmas.

\subsection*{Notation} Throughout this paper, we denote by $\set{D}$ the unit disk of $\set{C}$, and $\rD{r}$ the open disk of radius $r\in\set{R}_+^*$ for the standard Euclidean metric of $\set{C}$. For $R\in\set{R}_+^*$, we also denote by $\DR{R}$ the open disk of hyperbolic radius $R$ in $\set{D}$, so that $\DR{R}=\rD{r}$ with $r=\frac{e^R-1}{e^R+1}$, or if $r\in\intco{0}{1}$, with $R=\ln\frac{1+r}{1-r}$.

If $\fol{}=\plfol{}$ is a \textsingfol{} and $x\in\manis{M}{E}$, we denote by~$\leafu{x}$ the leaf of $\fol{}$ through $x$. Moreover, if $\leafu{x}$ is hyperbolic, we denote by $\phi_{x}\colon\set{D}\to\leafu{x}$ a uniformization of $\leafu{x}$ such that $\phi_x(0)=x$. Given a Hermitian metric $\metm{M}$ on~$\mani{M}$, we denote by $\dhimpnov$ the distance induced by $\metm{M}$ and by $\set{B}(a,\rho)$ the ball of center~$a$ and radius~$\rho$. We use the same notation for a ball in $\set{R}^n$. Similarly, we denote by $\dPCnov$ the Poincar\'{e} distance if one is given a Poincar\'{e} metric denoted $\metPC{}$ on the disk or on a leaf. We are also given Bowen distances~$d_R$ on $\manis{M}{E}$, and denote by $B_R(x,\eps)$ the ball of center~$x$ and radius~$\eps$ for the distance~$d_R$. 

Finally, we denote by $\Leb(B)$ the Lebesgue measure of a Borel set $B\subset\set{R}^n$. We use $C$, $C'$, $C''$, etc. to denote positive constants which may change from a line to another.

\subsection*{Acknowledgments} The  author is supported by the Labex CEMPI (ANR-11-LABX-0007-01) and by the project QuaSiDy (ANR-21-CE40-0016).

\section{Hyperbolic entropy for foliations}\label{secdefent}

\subsection{Leafwise Poincar\'e metric}

In all this section, we let $\fol=\plfol$ be a \textsingfol{} on a compact complex manifold $\mani{M}$. Suppose that $\mani{M}$ is endowed with a Hermitian metric $\metm{M}$ and for $x\in\manis{M}{E}$, consider
\begin{equation}\label{eqdefeta}\eta(x)=\sup\left\{\norm{\alpha'(0)}_{\metm{M}};\,\alpha\colon\set{D}\to\leafu{x}~\text{holomorphic}~\text{such}~\text{that}~\alpha(0)=x\right\}.\end{equation}
Above, $\norm{v}_{\metm{M}}$ is the norm of a vector $v\in T_x\leafu{x}$ with respect to the Hermitian metric $\metm{M}$. That is, $\norm{v}_{\metm{M}}=\left(g_{\mani{M},x}(v,v)\right)^{1/2}$. The map~$\eta$ was introduced by Verjovsky in~\cite{Ver}. It is designed to satisfy the following facts.

\begin{prop}\label{propeta}
  \begin{enumerate}
  \item For $x\in\manis{M}{E}$, $\eta(x)<+\infty$ if and only if the leaf $\leafu{x}$ is hyperbolic, that is, it is uniformized by the Poincar\'e disk $\set{D}$.

   \item If $\leafu{x}$ is hyperbolic, we have $\eta(x)=\norm{\phi'(0)}_{\metm{M}}$, where $\phi\colon\set{D}\to\leafu{x}$ is any uniformization of $\leafu{x}$ such that $\phi(0)=x$.
  \item If $\leafu{x}$ is hyperbolic, then $\frac{4\metm{M}}{\eta^2}$ induces the Poincar\'e metric on $\leafu{x}$.
  \end{enumerate}
\end{prop}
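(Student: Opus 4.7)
The plan is to derive all three assertions from a single application of the Schwarz lemma, combined with the uniformization theorem. First I would suppose that $\leafu{x}$ is hyperbolic and fix a uniformization $\phi\colon\set{D}\to\leafu{x}$ with $\phi(0)=x$. Given any holomorphic $\alpha\colon\set{D}\to\leafu{x}$ with $\alpha(0)=x$, the universal covering property of $\phi$ lets me lift $\alpha$ to a holomorphic $\tilde\alpha\colon\set{D}\to\set{D}$ with $\tilde\alpha(0)=0$ and $\alpha=\phi\circ\tilde\alpha$. The classical Schwarz lemma gives $\Cmod{\tilde\alpha'(0)}\leq1$, hence by the chain rule $\norm{\alpha'(0)}_{\metm{M}}\leq\norm{\phi'(0)}_{\metm{M}}$, with equality when $\tilde\alpha$ is a rotation. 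This immediately yields assertion~(2) and the forward direction of~(1); it also shows that $\norm{\phi'(0)}_{\metm{M}}$ is independent of the chosen uniformization, since any two uniformizations sending $0$ to $x$ differ by a rotation.

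For the converse direction of~(1), I would assume that $\leafu{x}$ is not hyperbolic, so that there exists a non-constant holomorphic map $\psi\colon\set{C}\to\leafu{x}$ with $\psi(0)=x$ (take a universal covering when $\leafu{x}$ is covered by $\set{C}$, and the inclusion of an affine chart when $\leafu{x}=\set{P}^1$). The rescalings $\alpha_R(\xi)=\psi(R\xi)$ are then holomorphic from $\set{D}$ into $\leafu{x}$ with $\alpha_R(0)=x$ and $\norm{\alpha_R'(0)}_{\metm{M}}=R\norm{\psi'(0)}_{\metm{M}}\to+\infty$, so $\eta(x)=+\infty$.

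For~(3), I would work in local holomorphic coordinates at $x$. The Poincar\'e metric on $\set{D}$ is $\rho=\frac{4|dz|^2}{(1-|z|^2)^2}$, which satisfies $\norm{\partial_z|_0}_\rho^2=4$. The leafwise Poincar\'e metric $\metPC$ on $\leafu{x}$ is characterized by $\phi^*\metPC=\rho$, so evaluating at $0$ gives $\norm{\phi'(0)}_{\metPC}^2=4$. Since $T_x\leafu{x}$ is a complex line, the restrictions of $\metPC$ and of $\metm{M}$ to it are proportional, and comparing the $\metPC$-norm and the $\metm{M}$-norm of $\phi'(0)$ via assertion~(2) forces the proportionality factor to be $4/\eta(x)^2$. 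Repeating this computation at every other point $y\in\leafu{x}$, using a uniformization obtained by pre-composing $\phi$ with an element of $\Aut(\set{D})$ sending $0$ to $\phi^{-1}(y)$, yields the pointwise identity $\metPC=\frac{4}{\eta^2}\metm{M}$ on the entire leaf.

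The substantive content is concentrated in the Schwarz-lift argument of (1)--(2); once that is in place, the rest reduces to a routine pointwise comparison of two Hermitian forms on a complex line, and I do not anticipate any real obstacle.
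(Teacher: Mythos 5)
Your proof is correct and complete. The paper states Proposition~\ref{propeta} without proof, treating it as a classical fact going back to Verjovsky's construction of~$\eta$; your Schwarz-lemma lift argument for~(1)--(2), the $\psi(R\xi)$ rescaling for the converse of~(1), and the pointwise proportionality of Hermitian forms on the complex line $T_y\leafu{x}$ for~(3) constitute precisely the standard argument that the paper implicitly relies on, and I see no gaps.
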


Here, we are interested in the case of hyperbolic leaves. We need to specify our global setting. Let us introduce some terminology, for which we follow~\cite[Definition~3.1]{DNSII}.

\begin{defn} \label{defBrody}If all the leaves of $\fol$ are hyperbolic, we say that $\fol$ is \emph{hyperbolic}. If moreover there exists a constant $c_0>0$ such that $\eta(x)<c_0$ for all $x\in\manis{M}{E}$, we say that~$\fol$ is \emph{Brody-hyperbolic}.

  We say \emph{the Poincar\'e metric of $\fol$ is complete} if $\fol$ is hyperbolic and $\frac{4\metm{M}}{\eta^2}$ is complete as a Hermitian metric on $\manis{M}{E}$. We refer the reader to~\cite[Property~{\bf P.2.}]{MLN} for more details.
\end{defn}

In full generality, it is not known whether the map~$\eta$ is continuous. In fact, we can only say that it is lower semi-continuous~\cite[Theorem~20]{surForSib}. However, with the asumption that the Poincar\'e metric is complete, Lins Neto~\cite[Theorem~A]{LN} shows the continuity of~$\eta$. For discussions about this regularity, see~\cite[Section~4]{surVANG21} and the references therein. See also~\cite{DNSI,DNSII,Bac1} for a modulus of continuity with more restrictive singularities.

As mentioned before, there are cases when we know the Poincar\'e metric is complete.

\begin{thm}[Canille~Martins--Lins~Neto~{\cite[Theorems~1 and~2]{MLN}}]\label{thmCMLN} Let~$\fol$ be a local \textsingfol{} near $0\in\set{C}^n$, with singularity at~$0$ of any type listed in Corollary~\ref{maincor}. Then, the Poincar\'e metric of~$\fol$ is complete at~$0$.
\end{thm}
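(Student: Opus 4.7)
The goal is to show completeness of the Poincar\'e metric of $\fol$ at an isolated singularity $p=0$ satisfying either hypothesis (1) or (2) of Corollary~\ref{maincor}. By Proposition~\ref{propeta}(3) the Poincar\'e metric equals $4\metm{M}/\eta^{2}$, so completeness at $p$ amounts to showing that every leafwise arc converging to $p$ has infinite Poincar\'e length. The plan is to prove a pointwise upper bound
\[\eta(x)\leq C\|x\|\,\psi(\|x\|)\quad\text{for }x\text{ near }p,\]
where $\psi\colon\set{R}_+^*\to\set{R}_+^*$ is positive with $\int_{0^+}\frac{dr}{r\,\psi(r)}=+\infty$; the model choice is $\psi(r)=\log(1/r)$. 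Since $\metm{M}$ controls radial displacement inside a leaf, such a bound forces the leafwise Poincar\'e length of any curve tending to $p$ to diverge. By the definition~\eqref{eqdefeta} of $\eta$, producing the bound is equivalent to showing that every holomorphic $\alpha\colon\set{D}\to\leafu{x}$ with $\alpha(0)=x$ satisfies $\|\alpha'(0)\|_{\metm{M}}\leq C\|x\|\psi(\|x\|)$, which I would obtain via a Schwarz--Pick estimate after embedding the nearby leaves into a suitable auxiliary hyperbolic manifold.

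For type~(1), I would use the blow-up $\pi\colon\wt{\mani{M}}\to\mani{M}$ at $p$, with exceptional divisor $E\cong\proj{n-1}$, and analyze the lifted foliation $\wt{\fol}=\pi^{*}\fol$. The hypothesis that $X_k$ has an isolated singularity at $0$ implies that the singular set of $\wt{\fol}$ on $E$ is finite, consisting of the zeros on $\proj{n-1}$ of the projectivization of $X_k$, i.e.\ the directions in which $X_k$ is radial. Away from these finitely many points, either $\wt{\fol}$ is generically transverse to $E$ (dicritical case) or $E$ is invariant (non-dicritical case). In the dicritical case, the leaves near a regular point of $\wt{\fol}\cap E$ are small disks transverse to $E$ and the complement $\wt{\mani{M}}\setminus E$ is locally hyperbolic, so a Schwarz--Pick estimate applies directly. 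In the non-dicritical case, $E\setminus\sing(\wt{\fol})$ is itself a union of leaves with an explicit hyperbolic metric, and the Brody-hyperbolicity of $\fol$ allows one to extend this metric transversely. Pushing the resulting bound back down through $\pi$ gives the desired estimate on $\eta$, using $\|\pi(q)\|\to 0$ as $q\to E$.

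For a saddle-node in case~(2), the ambient dimension is $2$ and Dulac's theorem provides local coordinates $(x,y)$ in which $\fol$ is defined, up to a unit, by a vector field close to
\[X_{0}=x^{k+1}\der{}{x}+y(1+\lambda x^{k})\der{}{y},\]
for some integer $k\geq 1$ and some $\lambda\in\set{C}$, together with holomorphic Martinet--Ramis normalizations in each Stokes sector. The strong separatrix $\{x=0\}$ is convergent and is a leaf of $\fol$. In a sector, the change of variable $w=1/(kx^{k})$ straightens the $x$-direction, exhibiting the other leaves of $X_{0}$ as graphs of explicit exponential functions of $w$ over a half-plane. The Poincar\'e density of a half-plane is explicit, and pushing it back through the sectorial normalization (whose transitions across Stokes rays are tangent to the identity to infinite order along the separatrices) yields the Schwarz--Pick bound on $\eta$; the analogous analysis near $\{y=0\}$ covers the weak direction.

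The main technical obstacle is the induction in case~(1): after one blow-up, the new singularities on $E$ need not themselves satisfy the hypothesis of Corollary~\ref{maincor}, so the argument cannot be iterated naively. Following Canille~Martins and Lins~Neto, one circumvents a full resolution by constructing, directly at $p$, a family of Brody-hyperbolic parametrizations built from a sectorial first integral of $X_k$ together with the Euler vector field $R=\sum_{j}z_{j}\der{}{z_{j}}$, which is transverse to $\fol$ outside $p$ precisely because $X_{k}$ has an isolated singularity. Producing uniform estimates on the derivatives of these parametrizations, in the presence of possibly divergent formal normalizations, is the delicate analytic point; combined with the Brody-hyperbolicity of the ambient foliation, it yields the required uniform bound on $\eta$ near $p$.
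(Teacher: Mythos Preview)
The paper does not prove this theorem at all: it is stated as a citation of Theorems~1 and~2 of Canille~Martins--Lins~Neto~\cite{MLN}, with no argument given beyond the attribution. Its only role in the paper is to feed into Lemma~\ref{metPCcompletelocimpglob} and thereby justify that Corollary~\ref{maincor} follows from Theorem~\ref{mainthm}. So there is nothing in the paper to compare your proposal against.

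What you have written is not a proof but a plausible outline of how one might try to reprove the Canille~Martins--Lins~Neto result. Several points in it are genuinely delicate and not settled by your sketch: in case~(1) you correctly flag that a single blow-up does not reduce to the same class of singularities, but the workaround you describe (sectorial first integrals of $X_k$ plus the Euler field) is asserted rather than carried out, and the uniformity of the derivative bounds is exactly the hard analytic content of~\cite{MLN}. In case~(2) the sectorial normalization argument is roughly the right shape, but controlling $\eta$ uniformly across Stokes sectors and near the weak separatrix (which may be formal) again requires real work. If your intent was to reproduce the paper's treatment of this statement, the correct move is simply to cite~\cite{MLN} and not attempt a proof.
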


Therefore, Corollary~\ref{maincor} is a consequence of the following, the proof of which is essentially the same as~\cite[Proposition~3.3]{DNSII}.

\begin{lem}\label{metPCcompletelocimpglob} Let $\fol=\plfol$ be a Brody-hyperbolic \textsingfol{} on a compact complex manifold. Suppose that all $a\in\mani{E}$ admit a neighbourhood~$U_a$ on which the Poincar\'e metric of $\restriction{\fol}{U_a}$ is complete at $a$. Then, the Poincar\'e metric of~$\fol$ is complete.
\end{lem}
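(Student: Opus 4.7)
The plan is to reduce global completeness of the Poincar\'e metric to the local completeness assumption by establishing, near each singularity $a\in\mani{E}$, a two-sided comparison between the global Verjovsky function $\eta$ and its analog $\eta_{\mathrm{loc}}$ attached to the local foliation $\restriction{\fol}{U_a}$. Once such a comparison is in hand, any path with finite global Poincar\'e length that would escape towards $\mani{E}$ can be localized near a singularity and contradicts the local completeness of $\restriction{\fol}{U_a}$ there.

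First I would use Brody-hyperbolicity to control global uniformizations. Composing $\phi_x$ with an automorphism of $\set{D}$ sending $0$ to $\xi$ and differentiating gives the Schwarz--Pick identity $\norm{d\phi_x(\xi)}_{\metm{M}}=\eta(\phi_x(\xi))/(1-\Cmod{\xi}^2)\leq c_0/(1-\Cmod{\xi}^2)$; integrating along a Poincar\'e radius then bounds the Hermitian distance from $x=\phi_x(0)$ to $\phi_x(\xi)$ by $(c_0/2)\dPCp{0}{\xi}$. Choose a neighbourhood $V_a\Subset U_a$ with $\dhimp{V_a}{\manis{M}{U_a}}\geq\delta>0$ and set $R_0=2\delta/c_0$. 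Then for every $x\in V_a$, the image $\phi_x(\DR{R_0})$ is a connected subset of $\leafu{x}\cap U_a$ containing $x$, hence is contained in the local leaf $\leafu{x}^{\mathrm{loc}}$, the connected component of $\leafu{x}\cap U_a$ through $x$.

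Next I would prove the key inequalities $\eta_{\mathrm{loc}}\leq\eta\leq C\eta_{\mathrm{loc}}$ on $V_a\setminus\{a\}$. The first follows from the inclusion $\leafu{x}^{\mathrm{loc}}\subset\leafu{x}$, which makes the holomorphic disks $\set{D}\to\leafu{x}^{\mathrm{loc}}$ sending $0$ to $x$ form a subfamily of those used to define $\eta(x)$. For the second, the restriction $\restriction{\phi_x}{\DR{R_0}}\colon\DR{R_0}\to\leafu{x}^{\mathrm{loc}}$ lifts, by simple connectedness of $\DR{R_0}$, through the universal covering $\phi_x^{\mathrm{loc}}\colon\set{D}\to\leafu{x}^{\mathrm{loc}}$ to a holomorphic $\wt\phi_x\colon\DR{R_0}\to\set{D}$ with $\wt\phi_x(0)=0$. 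Recalling $\DR{R_0}=\rD{r_0}$ with $r_0=\tanh(R_0/2)$, Schwarz's lemma gives $|\wt\phi_x'(0)|\leq 1/r_0$; differentiating $\phi_x=\phi_x^{\mathrm{loc}}\circ\wt\phi_x$ at $0$ then yields $\eta(x)\leq\eta_{\mathrm{loc}}(x)/r_0$, so one may take $C=1/r_0$. Consequently the corresponding Poincar\'e metrics satisfy $C^{-2}\cdot 4\metm{M}/\eta_{\mathrm{loc}}^2\leq 4\metm{M}/\eta^2\leq 4\metm{M}/\eta_{\mathrm{loc}}^2$ on $V_a\setminus\{a\}$, so any path in a single local leaf inside $V_a\setminus\{a\}$ has global Poincar\'e length at least $C^{-1}$ times its local one.

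To conclude, let $\gamma\colon\intco{0}{T}\to\manis{M}{E}$ be a leafwise path of finite global Poincar\'e length $T$. Brody-hyperbolicity bounds its Hermitian length by $c_0T/2$, so by compactness $\gamma$ converges in $\mani{M}$ to some $p$. If $p\in\manis{M}{E}$, the lower semi-continuity of $\eta$ together with $\eta\leq c_0$ makes $4\metm{M}/\eta^2$ comparable to $\metm{M}$ on a neighbourhood of $p$ in $\manis{M}{E}$, so $\gamma$ converges to $p$ also for the global Poincar\'e metric and is not divergent in $\manis{M}{E}$. If instead $p=a\in\mani{E}$, eventually $\gamma$ lies in $V_a$ and, by continuity in its leaf, in a single local leaf $\leafu{x}^{\mathrm{loc}}$; the comparison above then forces this tail to have finite local Poincar\'e length, contradicting the hypothesis that the Poincar\'e metric of $\restriction{\fol}{U_a}$ is complete at $a$. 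The main obstacle is the inequality $\eta\leq C\eta_{\mathrm{loc}}$: the immediate direction $\eta_{\mathrm{loc}}\leq\eta$ goes the wrong way for transferring local completeness into global completeness, and it is the Schwarz-lemma upper bound on $\eta/\eta_{\mathrm{loc}}$, underpinned by Brody-hyperbolicity guaranteeing a uniform hyperbolic neighbourhood of $x$ inside $\leafu{x}^{\mathrm{loc}}$, that carries the whole argument.
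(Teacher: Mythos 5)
Your proof is correct and follows essentially the same route as the paper: both hinge on establishing the one-sided comparison $\eta\leq C\,\eta_{\mathrm{loc}}$ near each singularity by using Brody-hyperbolicity to find a uniform radius $r_0$ with $\phi_x(\rD{r_0})\subset U_a$. Your Schwarz-lemma lift through the local universal cover is just an unrolled proof of the extremal characterization of $\eta_a$ that the paper invokes directly (testing $\zeta\mapsto\phi_x(r_0\zeta)$ in the supremum defining $\eta_a$), and your final path argument simply spells out the step the paper dismisses as "not difficult to see".
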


\begin{proof} Let $\eta_a$ denote the~$\eta$ map of the foliation $\restriction{\fol}{U_a}$. We will show that there exists a constant $C_a>0$ such that $\eta\leq C_a\eta_a$ on some neighbourhood $V_a\subset U_a$ of~$a$. Since~$\mani{M}$ is compact and~$\fol$ is Brody-hyperbolic, it is not difficult to see that it implies the result. Let $\rho>0$ be such that the ball $\set{B}(a,\rho)$ is contained in~$U_a$ and consider $V_a=\set{B}\left(a,\frac{\rho}{2}\right)$. Fix $x\in V_a$ and $\phi_x\colon\set{D}\to\leafu{x}$ a uniformization of $\leafu{x}$ such that $\phi_x(0)=x$. That way, $\eta(x)=\norm{\phi_x'(0)}_{\metm{M}}$. Since $\fol$ is Brody-hyperbolic, there exists a radius $r_0\in\intoo{0}{1}$, independent on~$x$, such that $\phi_x(\rD{r_0})\subset U_a$. More precisely, the hyperbolic radius $R_0=\ln\frac{1+r_0}{1-r_0}$ corresponding to $r_0$ can be chosen to be equal to $\frac{\rho}{c_0}$, for $c_0>0$ as in Definition~\ref{defBrody}. By the extremal condition~\eqref{eqdefeta} of~$\eta_a$, it follows that $\eta_a(x)\geq r_0\norm{\phi_x'(0)}_{\metm{M}}=r_0\eta(x)$.
\end{proof}

\subsection{Bowen distance and various notions of entropy}

From now on, suppose that $\fol$ is hyperbolic. Also, denote by $\dhimpnov{}$ the distance on $\mani{M}$ induced by the ambient Hermitian metric $\metm{M}$. For $x\in\manis{M}{E}$, fix $\phi_x\colon\set{D}\to\leafu{x}$ a uniformization of $\leafu{x}$ such that $\phi_x(0)=x$. The idea of Dinh, Nguy\^{e}n and Sibony~\cite[pp.~581--582]{DNSI} is to consider the Poincar\'e distance in $\set{D}$ to be a canonical time. More precisely, for $R\geq0$, consider the Bowen distance
\[d_R(x,y)=\inf\limits_{\theta\in\set{R}}\sup\limits_{\xi\in\adhDR{R}}\dhimp{\phi_x(\xi)}{\phi_y(e^{i\theta}\xi)},\quad x,y\in\manis{M}{E}.\]
It measures the distance between the orbits of $x$ and $y$ up to time $R$. It is clear that it is independent on the choice of $\phi_x$. This enables us to define various notions of entropy. For $x\in\manis{M}{E}$, $R\geq0$ and $\eps>0$, denote by $B_R(x,\eps)=\{y\in\mani{M}\,;\,d_R(x,y)<\eps\}$ the \emph{Bowen ball} of radius~$\eps$ and center~$x$ up to time $R$. For $Y\subset\manis{M}{E}$, $R\geq0$, $\eps>0$ and $F\subset Y$, we  say that $F$ is \emph{$(R,\eps)$-dense} in $Y$ if $Y\subset\cup_{x\in F}B_R(x,\eps)$. Denote by $N(Y,R,\eps)$ the minimal cardinality of an $(R,\eps)$-dense subset in $Y$. The \emph{topological hyperbolic entropy} of $Y$ is defined as
\[h(Y)=\sup\limits_{\eps>0}\limsup\limits_{R\to+\infty}\frac{1}{R}\log N(Y,R,\eps).\]
For $Y=\manis{M}{E}$, we denote it by $h(\fol)$. If $\mani{M}$ is compact, it is not difficult to prove that it does not depend on the choice of $\metm{M}$. A similar and equivalent definition can be made with maximal $(R,\eps)$-separated sets, but we do not need it. The interested reader can see~\cite[Section~3]{DNSI} for more details.

Now, we introduce some entropies for harmonic measures following~\cite[Section~4]{DNSI}. Basically, harmonic measures are quasi-invariant measures by the foliation~$\fol$. Here, we only use~\eqref{desintmu} below, which can be seen as a local expression of this quasi-invariance. That is, a harmonic measure can be locally disintegrated into a transverse positive Radon measure times a leafwise positive harmonic density. We refer the reader to~\cite{DNS12,surForSib,Gar,surVANG21} for a definition and a link with $\ddc$-closed currents.

To define measure-theoretic entropy, the idea is to consider analogous quantities as those emerging from Brin--Katok Theorem. Let~$\mu$ be a harmonic probability measure on~$\fol$. All our harmonic measures will be supposed to have total mass~$1$. For $R\geq0$, $\eps>0$ and $\delta\in\intoo{0}{1}$, let $N(R,\eps,\delta)$ be the smallest integer $N$ such that there exists $x_1,\dots,x_N\in\manis{M}{E}$ with $\mu\left(\cup_{i=1}^NB_R\left(x_i,\eps\right)\right)\geq1-\delta$. Define the \emph{hyperbolic entropy of~$\mu$} as
\[h(\mu)=\sup\limits_{\delta\in\intoo{0}{1}}\sup\limits_{\eps>0}\limsup\limits_{R\to+\infty}\frac{1}{R}\log N(R,\eps,\delta).\]
To study this quantity, Dinh, Nguy\^{e}n and Sibony consider \emph{local upper and lower hyperbolic entropies} $h^{\pm}(\mu,x)$, for $x\in\manis{M}{E}$. Namely,
\[\begin{aligned}h^+(\mu,x,\eps)=\limsup\limits_{R\to+\infty}-\frac{1}{R}\log\mu\left(B_R(x,\eps)\right),&\qquad h^+(\mu,x)=\sup_{\eps>0}h^+(\mu,x,\eps);\\
h^-(\mu,x,\eps)=\liminf\limits_{R\to+\infty}-\frac{1}{R}\log\mu\left(B_R(x,\eps)\right),&\qquad h^-(\mu,x)=\sup_{\eps>0}h^-(\mu,x,\eps).\end{aligned}\]
In the context of a non-singular foliation, they prove that these are leafwise constant (see Theorem~\ref{nshpmconstant}). This enables them to obtain a link between all these entropies.

\begin{thm}[Dinh--Nguy\^{e}n--Sibony~{\cite[Proposition~4.5]{DNSI}}]\label{orderentropies} Let $\fol=\nsplfol{}$ be a (non-singular) hyperbolic holomorphic foliation on a complex manifold~$\mani{M}$ (we do not assume that~$\mani{M}$ is compact). Let $\mu$ be a harmonic measure on~$\fol$. If the quantities $x\mapsto h^{\pm}(\mu,x)$ are constant $\mu$-almost everywhere, then,
  \[h^-(\mu)\leq h(\mu)\leq h^+(\mu)\leq h(\fol).\]
\end{thm}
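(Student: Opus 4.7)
The plan is to prove the three inequalities separately by Brin--Katok style counting arguments, using Egorov's theorem to promote almost-everywhere pointwise bounds to uniform bounds on large-measure sets. Let $\alpha^{\pm}$ denote the $\mu$-a.e.\ constant values of $h^{\pm}(\mu,\cdot)$, and recall that $\eps\mapsto h^{\pm}(\mu,x,\eps)$ is non-increasing, with pointwise limit $h^{\pm}(\mu,x)$ as $\eps\to 0$. For the leftmost inequality $h(\mu)\leq h^+(\mu)$, fix $\eps,\delta,\gamma>0$. Since the $\limsup$ defining $h^+(\mu,\cdot,\eps/2)$ is bounded above by $\alpha^+$ pointwise a.e., Egorov yields an $R_0$ and a set $Y$ with $\mu(Y)\geq 1-\delta/2$ on which $\mu(B_R(x,\eps/2))\geq e^{-R(\alpha^++\gamma)}$ uniformly for $R\geq R_0$. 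A maximal $(R,\eps)$-separated subset $F\subset Y$ then gives pairwise disjoint $\eps/2$-Bowen balls of $\mu$-mass at least $e^{-R(\alpha^++\gamma)}$, forcing $|F|\leq e^{R(\alpha^++\gamma)}$; by maximality the $\eps$-balls around $F$ cover $Y$, so $N(R,\eps,\delta)\leq|F|\leq e^{R(\alpha^++\gamma)}$ for $R\geq R_0$.

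For the rightmost inequality $h^+(\mu)\leq h(\fol)$, I would fix $\eps,\gamma>0$, choose $(R,\eps/2)$-dense sets $F_R\subset\manis{M}{E}$ of cardinality $\leq e^{R(h(\fol)+\gamma/2)}$ for $R$ large, and set $A_R=\{x : \mu(B_R(x,\eps))\geq e^{-R(h(\fol)+\gamma)}\}$. Any point of a ``large'' ball $B_R(x_i,\eps/2)$ (one whose $\mu$-mass exceeds $e^{-R(h(\fol)+\gamma)}$) automatically lies in $A_R$, since $B_R(x_i,\eps/2)\subset B_R(y,\eps)$ whenever $y\in B_R(x_i,\eps/2)$. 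The total mass of the remaining ``small'' balls is at most $|F_R|\cdot e^{-R(h(\fol)+\gamma)}\leq e^{-R\gamma/2}$, which is summable in $R$. Borel--Cantelli then gives that $\mu$-a.e.\ $x$ lies in $A_R$ for all but finitely many $R$, hence $h^+(\mu,x,\eps)\leq h(\fol)+\gamma$; letting $\eps\to 0$ and $\gamma\to 0$ concludes.

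The middle inequality $h^-(\mu)\leq h(\mu)$ is the most delicate, because $h^-$ is a liminf: Egorov only uniformizes a bound on $\mu(B_R(x,2\eps))$ in terms of the $\eps$-dependent quantity $h^-(\mu,x,2\eps)$, rather than $\alpha^-$ directly. My plan is to first exploit the monotone convergence $h^-(\mu,x,2\eps)\nearrow\alpha^-$ as $\eps\to 0$ to pick, for each $\gamma>0$, a radius $\eps_0>0$ and a set $Y$ with $\mu(Y)\geq 1/2$ on which $h^-(\mu,x,2\eps_0)\geq\alpha^--\gamma$. Egorov then yields $R_0$ and $Y'\subset Y$ of positive measure on which $\mu(B_R(x,2\eps_0))\leq e^{-R(\alpha^--2\gamma)}$ uniformly for $R\geq R_0$. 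Given any $(R,\eps_0)$-cover of a set of $\mu$-mass $\geq 1-\delta$, each Bowen ball meeting $Y'$ is contained in $B_R(y,2\eps_0)$ for some $y\in Y'$ and thus has mass at most $e^{-R(\alpha^--2\gamma)}$; since the balls meeting $Y'$ together carry at least $\mu(Y')-\delta$ of the mass, counting gives $N(R,\eps_0,\delta)\geq(\mu(Y')-\delta)\,e^{R(\alpha^--2\gamma)}$, and taking $\limsup$ then $\gamma\to 0$ closes the chain. The principal technical hurdle across all three steps is reconciling the $\eps$-dependent local entropies $h^{\pm}(\mu,x,\eps)$ with the intrinsic quantities $h^{\pm}(\mu,x)$, a reconciliation that in the $h^-$ case requires the preliminary choice of $\eps_0$ before Egorov can be brought to bear.
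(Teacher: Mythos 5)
Your argument is correct and is the standard Brin--Katok argument (Egorov for $h(\mu)\leq h^+(\mu)$ and $h^-(\mu)\leq h(\mu)$, Borel--Cantelli for $h^+(\mu)\leq h(\fol)$), which is exactly how Dinh--Nguy\^en--Sibony prove their Proposition~4.5; the paper itself gives no independent proof and simply observes that the DNSI proof only uses a.e.\ constancy of $h^{\pm}(\mu,\cdot)$. The only points you leave tacit --- Borel measurability of $x\mapsto\mu(B_R(x,\eps))$, reducing the continuous parameter $R$ to an integer sequence for Borel--Cantelli, and fixing $\delta<\mu(Y')$ in the third step so that the lower bound is nontrivial --- are all routine and consistent with the cited source.
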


Under these hypotheses, the above theorem is in fact implicitly proved, for the authors state it in the setup of Theorem~\ref{nshpmconstant}. However, they only use that $h^{\pm}(\mu,x)$ are constant. Let us define another notion of entropy that they introduce to prove their results.

\subsection{Local transversal entropy} Let $x\in\manis{M}{E}$. If $\eps>0$ is sufficiently small, then there exists a flow box $U\simeq\set{D}\times\set{T}$ such that $\set{B}(x,\eps)\subset U$. Denote by $\pi_{\set{T}}\colon U\to\set{T}$ the projection on the second coordinate. Consider a disintegration of the measure $\mu$ in~$U$. That is,
\begin{equation}\label{desintmu}\mu=\int_{\set{T}}\left(\int_{\set{D}}f_t(z)\metPC(z,t)\right)d\nu(t),\end{equation}
where $\nu$ is a finite positive Radon measure on $\set{T}$ and the $f_t$, for $t\in\set{T}$, are positive harmonic functions on~$\set{D}$ with $f_t(0)=1$. Such a decomposition is unique up to changing the $f_t$ on a $\nu$-negligible set and exists due to~\cite[Propositions~2.3 and~5.1]{DNS12}. Define
\[\begin{aligned}\wt{h}^+(\mu,x,\eps)=\limsup\limits_{R\to+\infty}-\frac{1}{R}\log\nu\left(\pi_{\set{T}}\left(B_R(x,\eps)\right)\right),&\qquad \wt{h}^+(\mu,x)=\sup_{\eps>0}\wt{h}^+(\mu,x,\eps);\\
\wt{h}^-(\mu,x,\eps)=\liminf\limits_{R\to+\infty}-\frac{1}{R}\log\nu\left(\pi_{\set{T}}\left(B_R(x,\eps)\right)\right),&\qquad \wt{h}^-(\mu,x)=\sup_{\eps>0}\wt{h}^-(\mu,x,\eps).\end{aligned}\]
It is not difficult to show that $\wt{h}^{\pm}(\mu,x)$ do not depend on the choice of the flow box $U$. The following is implicitly proved by the three authors. They state it with more restrictive hypotheses but their proof still works without changing anything.
\begin{thm}[Dinh--Nguy\^{e}n--Sibony~{\cite[Theorem~4.2]{DNSI}}]\label{hpmTconstant} Let $\fol=\nsplfol$ be a Brody-hyperbolic holomorphic foliation on a complex manifold~$\mani{M}$ and~$\mu$ be a harmonic measure on~$\fol$. Then, the quantities $x\mapsto \wt{h}^{\pm}(\mu,x)$ are leafwise constant. In particular, if~$\mu$ is ergodic, they are constant $\mu$-almost everywhere and we denote them by $\wt{h}^{\pm}(\mu)$.
\end{thm}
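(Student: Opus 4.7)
The plan is to establish the leafwise constancy of $x\mapsto\wt{h}^{\pm}(\mu,x)$ by a purely geometric comparison of Bowen balls at nearby points of a common leaf. The ergodic conclusion is then automatic, since any measurable leafwise constant function is $\mu$-almost everywhere constant when $\mu$ is ergodic. Since leaves are path-connected, it is enough to prove the following local statement and then chain along a path in the leaf: if $x$ and $y$ lie in the same plaque of some flow box $U\simeq\set{D}\times\set{T}$ and if the leafwise Poincar\'e distance $R_0:=\dPC{x}{y}$ is small enough, then $\wt{h}^{\pm}(\mu,x)=\wt{h}^{\pm}(\mu,y)$, both computed with this flow box $U$. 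The independence of $\wt{h}^{\pm}(\mu,\cdot)$ from the choice of flow box, recalled in the paper just before the theorem, then propagates the equality from plaque to plaque along a finite covering of any path in the leaf.

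To prove the local claim, let $a\in\set{D}$ satisfy $\phi_x(a)=y$, so $R_0=\dPC{0}{a}$. Up to a rotation which can be absorbed into the infimum defining the Bowen distance, I may write $\phi_y=\phi_x\circ\tau$ with $\tau\in\Aut(\set{D})$ and $\tau(0)=a$. The main step is the following geometric inclusion: for every $z\in B_R(y,\eps)$ there exists a point $z'\in B_{R-R_0}(x,\eps)$ with $\pi_{\set{T}}(z')=\pi_{\set{T}}(z)$. Given such $z$, pick $\theta$ realising $d_R(y,z)<\eps$, set $\sigma(w):=e^{i\theta}\tau^{-1}(w)\in\Aut(\set{D})$, and let $z':=\phi_z(\sigma(0))=\phi_z(-e^{i\theta}a)$. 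The triangle inequality for the Poincar\'e metric yields $\adhDR{R-R_0}\subset\tau(\adhDR{R})$, so the change of variable $\tilde\xi=\tau(\xi)$ in the definition of $d_R(y,z)$ gives
\[\sup_{\tilde\xi\in\adhDR{R-R_0}}\dhimp{\phi_x(\tilde\xi)}{\phi_z(\sigma(\tilde\xi))}<\eps.\]
Since $\phi_z\circ\sigma$ is a uniformization of $\leafu{z}$ sending $0$ to $z'$, it coincides with $\phi_{z'}$ up to a rotation (again absorbed into the infimum), so the left-hand side dominates $d_{R-R_0}(x,z')$, whence $z'\in B_{R-R_0}(x,\eps)$.

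It remains to check that $\pi_{\set{T}}(z')=\pi_{\set{T}}(z)$. Brody-hyperbolicity gives a Lipschitz bound on the uniformizations, uniform in $w\in\mani{M}$, namely $\dhimp{\phi_w(u_1)}{\phi_w(u_2)}\leq C\dPC{u_1}{u_2}$ for $u_1,u_2\in\set{D}$, obtained from the elementary estimate $\norm{\phi_w'(u)}_{\metm{M}}\leq c_0/(1-|u|^2)$. Hence $\dhimp{z}{z'}\leq C\dPC{0}{-e^{i\theta}a}=CR_0$. Choosing $R_0$ and $\eps$ small compared to the plaque structure of $U$, the points $z$ and $z'$ lie in the same plaque of $\leafu{z}\cap U$, and therefore have the same transversal coordinate. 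The inclusion $\pi_{\set{T}}(B_R(y,\eps))\subset\pi_{\set{T}}(B_{R-R_0}(x,\eps))$ follows. Passing to the $\nu$-measure, taking $-\frac{1}{R}\log$, and using $\frac{R-R_0}{R}\to1$ yields $\wt{h}^{\pm}(\mu,y,\eps)\geq\wt{h}^{\pm}(\mu,x,\eps)$. The symmetric argument swapping $x$ and $y$ gives the reverse inequality, and the $\sup$ over small $\eps>0$ yields $\wt{h}^{\pm}(\mu,x)=\wt{h}^{\pm}(\mu,y)$.

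The main obstacle is ensuring that the auxiliary point $z'$ lies in the same plaque as $z$: this is exactly where Brody-hyperbolicity intervenes, through a Lipschitz bound on the family of uniformizations that is uniform across leaves. Without this uniformity, the drift $\dhimp{z}{z'}$ could be uncontrolled even for small $R_0$, and the reduction of $d_R(y,\cdot)$ to $d_{R-R_0}(x,\cdot)$ through a common transversal coordinate would collapse.
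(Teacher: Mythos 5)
Your proof is correct and matches the approach in~\cite{DNSI} to which the paper defers: you compare Bowen balls at nearby leaf points by precomposing with an automorphism of~$\set{D}$, use Brody-hyperbolicity (i.e.\ the uniform bound $\eta\leq c_0$) to get a uniform Lipschitz control $\dhermf{z}{z}{z'}\leq\tfrac{c_0}{2}\dPC{z}{z'}$ ensuring that $z$ and the auxiliary point $z'=\phi_z(\sigma(0))$ stay in the same plaque, and you deduce the transversal inclusion $\pi_{\set{T}}(B_R(y,\eps))\subset\pi_{\set{T}}(B_{R-R_0}(x,\eps))$, from which the equality of $\wt h^{\pm}$ follows by symmetry and the substitution $R\mapsto R-R_0$. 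All the computations (the inclusion $\adhDR{R-R_0}\subset\tau(\adhDR{R})$, the identification of $\phi_z\circ\sigma$ with $\phi_{z'}$ up to a rotation, the chaining along a path using independence of the flow box) check out, so this is essentially the proof the paper refers to.
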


Note that there is no compactness assumption on~$\mani{M}$. Therefore, we can apply it to $\left(\manis{M}{E},\leafatlas\right)$ in the singular case. Now, Theorem~\ref{mainthm} wil be a consequence of Theorems~\ref{orderentropies} and~\ref{hpmTconstant}, combined with the following, the proof of which occupies the next section and needs some detours.

\begin{prop}\label{hpm=hpmT+2} Let $\fol=\plfol$ be a Brody-hyperbolic singular holomorphic foliation on a compact complex manifold~$\mani{M}$. Let also~$\mu$ be an ergodic harmonic measure on~$\fol$. Suppose that the singularities of~$\fol$ are isolated and that the Poincar\'e metric of~$\fol$ is complete in the sense of Definition~\ref{defBrody}. Then, for $\mu$-almost every $x\in\manis{M}{E}$,
  \[h^\pm(\mu,x)=\wt{h}^{\pm}(\mu,x)+2.\]
  In particular, $x\mapsto h^{\pm}(\mu,x)$ are constant $\mu$-almost everywhere and $h^-(\mu)\geq2$.
\end{prop}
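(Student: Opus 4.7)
The plan is to reduce the equality $h^\pm(\mu,x)=\wt h^\pm(\mu,x)+2$ to plaquewise area estimates on the Bowen balls, and then to attack those estimates by combining a Birkhoff-type control on the time spent near the singular set with fine estimates on the distance between the identity and automorphisms of the disk. Fix a $\mu$-generic $x_0\in\manis{M}{E}$ and a flow box $U\simeq\set{D}\times\set{T}$ around it (disjoint from $\mani{E}$) with disintegration~\eqref{desintmu}. Fubini then yields
\[\mu(B_R(x_0,\eps))=\int_{\pi_{\set{T}}(B_R(x_0,\eps))}\left(\int_{\pi_{\set{T}}^{-1}(t)\cap B_R(x_0,\eps)}f_t(z)\,\metPC(z,t)\right)d\nu(t).\]
Since each $f_t$ is positive harmonic with $f_t(0)=1$, Harnack's inequality gives $f_t\asymp 1$ on fixed small Poincar\'e disks around $0$ uniformly in $t$. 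Taking $-\tfrac{1}{R}\log$ and letting $R\to+\infty$, the proposition reduces to proving that for $\mu$-a.e. $x_0$ and $\nu$-typical $t$, the Poincar\'e area of $\pi_{\set{T}}^{-1}(t)\cap B_R(x_0,\eps)$ is $e^{-(2+o(1))R}$. Together with Theorem~\ref{hpmTconstant} this yields the constancy statement and, since $\wt h^-\geq 0$, the bound $h^-(\mu)\geq 2$.

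The lower bound $\gtrsim e^{-2R}$ is the easy direction: if $z\in\set{D}$ has Poincar\'e distance $\leq ce^{-R}$ to $0$ for a suitable constant $c$, then reparametrizing $\phi_{x_0}$ via the M\"obius automorphism sending $0$ to $z$ (together with a suitable rotation $e^{i\theta}$) shows that $y=\phi_{x_0}(z)\in B_R(x_0,\eps)$; the set of such $z$ has Poincar\'e area $\asymp e^{-2R}$, so $\mu_t(\pi_{\set{T}}^{-1}(t)\cap B_R(x_0,\eps))\gtrsim e^{-2R}$ for $t=\pi_{\set{T}}(x_0)$ and, by continuity in the flow box, for $t$ in a neighbourhood.

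For the upper bound, any $y$ in the plaquewise intersection produces a reparametrization $\psi$ relating $\phi_{x_0}$ and $\phi_y$ such that $\dhimp{\phi_{x_0}(\xi)}{\phi_y(\psi(\xi))}<\eps$ for all $\xi\in\adhDR{R}$. Outside a small neighbourhood $\set{B}(\mani{E},\rho)$ of the singular set, Brody-hyperbolicity together with the completeness assumption on the Poincar\'e metric force $\metm{M}$ and $\tfrac{4\metm{M}}{\eta^2}$ to be uniformly equivalent, so at each such $\xi$ the $\eps$-constraint pushes $\psi(\xi)$ to within Poincar\'e distance $O(\eps)$ of $\xi$. The key ergodic input is then the Dinh--Nguy\^en--Sibony Birkhoff-type theorem~\cite{DNS12}: for $\mu$-a.e. $x_0$, the Poincar\'e proportion of $\xi\in\DR{R}$ with $\phi_{x_0}(\xi)\in\set{B}(\mani{E},\rho)$ converges as $R\to+\infty$ to $\mu(\set{B}(\mani{E},\rho))$, which tends to $0$ as $\rho\to 0$; the ``bad'' region in $\DR{R}$ thus has Poincar\'e proportion at most some $\delta=\delta(\rho)\to 0$.

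The main obstacle, and the technical heart of Section~\ref{secproof}, is converting this time-average statement into a covering estimate on the admissible reparametrizations: the plaquewise intersection should be covered by at most $e^{\delta R}$ Poincar\'e disks of radius $\simeq e^{-R}$. Following the strategy announced in the introduction, I plan to combine sharp estimates on how far an element $\tau\in\Aut(\set{D})$ can deviate from the identity on $\adhDR{R}$ with a combinatorial study of \emph{subsets of $\set{R}^n$ with prescribed steps}, that is, subsets whose pairwise distances lie in a prescribed union of intervals of total length at most $\delta$. Each admissible $\psi$ is encoded by finitely many real parameters; the bound on the bad set, combined with the sharp $\Aut(\set{D})$ estimates, forces any two admissible parameter vectors to have pairwise distances restricted to such a prescribed-steps set, and a Cantor ternary-type induction then bounds the Lebesgue measure of such a set by a factor $e^{\delta R}$ compared to the naive $e^{-2R}$ volume. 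This yields $\mu_t(\pi_{\set{T}}^{-1}(t)\cap B_R(x_0,\eps))\leq Ce^{(\delta-2)R}$, and letting $\rho\to 0$ (so $\delta\to 0$) completes the upper bound.
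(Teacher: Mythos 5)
Your proposal follows essentially the same route as the paper: reduce via the disintegration~\eqref{desintmu} and Harnack's inequality to a plaquewise Lebesgue estimate, get the lower bound by small M\"obius perturbations, and get the upper bound by combining the Dinh--Nguy\^en--Sibony Birkhoff-type theorem (controlling the hyperbolic time spent near $\mani{E}$) with fine estimates on $\dPCnov(\xi,\tau(\xi))$ for $\tau\in\Aut(\set{D})$ and a Cantor-type induction on \emph{prescribed-steps} sets. One point you gloss over is a dimension mismatch: the prescribed-steps structure lives on the set of admissible automorphisms $\tau_\zeta\circ r_\theta$, which is $3$-dimensional in $(\zeta,\theta)$, whereas the plaquewise intersection is $2$-dimensional, and the latter does not itself carry prescribed steps (two nearby $\zeta$'s can be admissible with distant rotation parameters $\theta$, so no restriction on $\Cmod{\zeta_1-\zeta_2}$ alone follows). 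This is why the paper proves two separate estimates, Lemma~\ref{estimLebtheta} (a $1$-dimensional bound $e^{-R}\lesssim\Leb(C_\eps)\lesssim e^{-(1-\delta)R}$ on the rotation slice) and Lemma~\ref{estimLebzeta} (a $3$-dimensional bound $e^{-3R}\lesssim\Leb(B_\eps)\lesssim e^{-3(1-\delta)R}$), and recovers the $2$-dimensional plaquewise measure $\Leb(A_\eps)$ by a Fubini-type sandwich $\Leb(B_\eps)/\sup\Leb(C_{2\eps})\leq\Leb(A_\eps)\leq\Leb(B_{2\eps})/\inf\Leb(C_\eps)$. Your phrase about the ``naive $e^{-2R}$ volume'' suggests you intended to run the Cantor covering directly on the plaque, which fails for the reason above; once the argument is instead run on the $3$-dimensional parameter set and then divided by the rotation-slice measure, it matches the paper.
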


\section{Proof of Proposition~\ref{hpm=hpmT+2}}\label{secproof}

\subsection{First reduction} Let $\fol=\plfol$ be a Brody-hyperbolic singular holomorphic foliation on a compact complex manifold~$\mani{M}$ and~$\mu$ be an ergodic harmonic measure on~$\fol$. Suppose that the singularities of~$\fol$ are isolated and that the Poincar\'e metric of~$\fol$ is complete. First, we intend to reduce the proof to two lemmas, which together will give estimates to the measure of plaquewise Bowen balls. More precisely, we estimate the quantity of automorphisms~$\tau$ of the disk, such that $\phi\circ\tau$ is close to~$\phi$, for some given~$\phi$ uniformization of a leaf. These estimates will take some time to be proven, but we already show how to obtain Proposition~\ref{hpm=hpmT+2} from them.

  \begin{lem}\label{estimLebtheta} For $\delta>0$ and $\mu$-almost every $x\in\manis{M}{E}$, there exists $\eps_0>0$ satisfying the following. For $\eps>0$ sufficiently small, there exists $C>0$ such that for all sufficiently large $R>0$ and $y\in B_R(x,\eps_0)$,
    \[C^{-1}e^{-R}\leq\Leb\left(\left\{\theta\in\intcc{-\pi}{\pi};~\dhimps{\adhDR{R}}{\phi_y\circ r_{\theta}}{\phi_y}<\eps\right\}\right)\leq Ce^{-(1-\delta)R}.\]
    Here, $r_{\theta}\in\Aut(\set{D})$ denotes the rotation of angle~$\theta$.
%      Moreover $\left\{\theta\in\set{R};~\phi_y\circ r_{\theta}\in B_R(\phi_y,\eps)\right\}$ contains a ball centered at~$0$ of radius $C^{-1}e^{-R}$.
    \end{lem}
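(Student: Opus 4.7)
The plan is to prove the lower bound directly from Brody-hyperbolicity and to deduce the upper bound from the Dinh--Nguyên--Sibony Birkhoff ergodic theorem, combined with a counting argument on ``approximate symmetries'' of $\phi_y$.

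For the lower bound, I would observe that the circular arc $s \mapsto e^{is}\xi$ from $\xi$ to $e^{i\theta}\xi$ in $\set{D}$ has Poincaré length $\frac{2|\xi|}{1-|\xi|^2}|\theta| \leq |\theta|\sinh R$ for $\xi \in \adhDR{R}$. Brody-hyperbolicity provides the uniform bound $\eta \leq c_0$, so by Proposition~\ref{propeta} the image arc under $\phi_y$ has Hermitian length at most $\frac{c_0 \sinh R}{2}|\theta|$. Whenever $|\theta| \leq \frac{2\eps}{c_0 \sinh R}$, this quantity is below $\eps$ uniformly in $\xi$ and in $y$, producing a set of Lebesgue measure at least $\frac{4\eps}{c_0 \sinh R} \geq C^{-1} e^{-R}$ for $R$ large.

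For the upper bound, I would first set up a control on the time spent near the singular set. Given $\delta > 0$, I fix a small $\delta' > 0$ (to be chosen in terms of $\delta$) and a neighborhood $W$ of $\mani{E}$ with $\mu(W) < \delta'$ and $\eta \geq \eta_0 > 0$ on $\mani{M} \setminus W$; the latter uses continuity of $\eta$ outside $\mani{E}$, guaranteed by completeness of the Poincaré metric~\cite{LN}. Applying the Birkhoff-type ergodic theorem of~\cite{DNS12} in a form adapted to weighted integrals along boundary circles would give, for $\mu$-a.e.\ $x$ and $R$ large, that the ``bad'' set $E_W := \{\theta \in [-\pi,\pi] : \phi_x(e^{i\theta}\xi_R) \in W\}$---for a fixed $\xi_R$ with $|\xi_R| = \tanh(R/2)$---has Lebesgue measure $\leq 4\pi\delta'$. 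A continuity argument would then transfer this bound to any $y \in B_R(x,\eps_0)$ with $\eps_0$ small. After choosing $\xi_R$ so that $\phi_y(\xi_R) \notin W$ and $\eps$ so small that $\set{B}(\phi_y(\xi_R),2\eps) \cap W = \emptyset$, the set under consideration is then contained in $\gamma^{-1}(\set{B}(\phi_y(\xi_R),\eps))$ for the curve $\gamma(\theta) = \phi_y(e^{i\theta}\xi_R)$, and on this set $\gamma$ has Hermitian speed at least $\eta_0 \sinh R / 2$. Each connected component therefore has Lebesgue length $O(e^{-R})$.

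The main obstacle will then be to bound the \emph{number} of such components by $O(e^{\delta R})$. A naive count---total Hermitian length of $\gamma$ off $E_W$, which is $O(e^R)$, divided by a per-transit length $\gtrsim \eps$---yields only $O(e^R)$ transits, missing the target by exactly the factor $e^{(1-\delta)R}$ one needs to save. To gain it I plan to exploit the self-similar structure of the set $A$ announced in the introduction: if $\theta_j, \theta_k \in A$, then by the triangle inequality $\theta_j - \theta_k$ is itself an approximate symmetry of $\phi_y$ on a slightly smaller hyperbolic disk, with doubled tolerance. Iterating, the difference sets $A - A$, $(A - A) - (A - A)$, and so on, are confined to a Cantor-like union of intervals whose total measures are controlled by Birkhoff bounds on successive enlargements of $E_W$. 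This is the ``subsets of $\set{R}^n$ with prescribed steps'' argument advertised in the introduction, and making it quantitative in a fine geometric setup---combining the plaque structure, the lower bound $\eta \geq \eta_0$, and the Birkhoff proportions---will be the central technical step.
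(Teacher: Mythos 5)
Your lower bound is essentially the paper's argument and is correct. The high-level direction for the upper bound is also right: use the Birkhoff-type theorem of Dinh--Nguy\^en--Sibony to control the time near the singular set, then exploit the structure of the set of approximate symmetries via a ``prescribed steps'' measure bound. But the proposal breaks down at exactly the places you flag as the central technical step, and there are two concrete gaps in how you have set it up.

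First, the single-circle formulation does not follow from the ergodic theorem. You define $E_W$ on one circle $\partial\DR{R}$ and assert $\Leb(E_W)\leq 4\pi\delta'$, but the convergence $m_{x,R}\to\mu$ only controls a weighted integral over the whole disk; a single circle can have arbitrarily large intersection with $W$. The paper instead works radius-by-radius: it defines $I_j^{\sing}\subset[0,R]$ as the set of hyperbolic radii $R'$ for which more than a fixed fraction ($\pi/3$) of the circle $\partial\DR{R'}$ lands in the neighborhood $U_j^{\sing}$, and bounds the \emph{measure in $R'$} of $I_j^{\sing}$ by $O(\delta R)$ via Lemma~\ref{estimintegral}. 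This is essential because, for two $\theta_1,\theta_2\in A$ with $\theta:=\Cmod{\theta_1-\theta_2}$, the radius at which $\dPCp{\xi}{e^{i\theta}\xi}$ sits in the target window $\intcc{\eps_1}{\eps_2}$ depends on $\theta$ (roughly $1-\Cmod{\xi}^2\sim\eps^{-1}\Cmod{\sin(\theta/2)}$, Lemma~\ref{distrotid}); you cannot fix a single $\xi_R$ in advance.

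Second, the ``prescribed steps'' structure is used in a single shot, not through iterated difference sets. The paper shows that $A$ itself satisfies: for any $\theta_1,\theta_2\in A$, $\Cmod{\theta_1-\theta_2}\in\cup_i\intcc{\eps_{i,1}}{\eps_{i,2}}$, where the intervals come from $I_1^{\sing}$; then Lemma~\ref{mesepspresc} (a ball-covering induction) gives $\Leb(A)\lesssim \eps_{N,2}\,3^N\prod_i(\eps_{i,2}/\eps_{i,1})$. Your iterated sets $(A-A)-(A-A)$, etc., play no role. Crucially, Lemma~\ref{mesepspresc} requires the separation condition $\eps_{i,2}<\eps_{i-1,1}/2$. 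That separation is exactly what the nested neighborhoods $\rho_1<\rho_2<\rho_3$ in Lemma~\ref{lemsetupsing} are engineered to deliver: working with three thresholds forces consecutive bad radius-intervals $\intoo{R_{i,2}}{R_{i,1}}$ to be at least $4$ apart (in hyperbolic distance), which translates into the required geometric decay between consecutive steps. Your proposal does not mention any such separation device, and without it the $3^N\prod_i(\eps_{i,2}/\eps_{i,1})$ estimate would not close. So while your sketch correctly names the ingredients, the mechanism that turns ``small proportion of time near $\mani{E}$'' into ``at most $e^{O(\delta R)}$ components'' is precisely what is missing, and the route you outline (one fixed circle plus iterated differences) would not produce it.
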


    We need a similar result for a general automorphism in $\Aut(\set{D})$. For $\zeta\in\set{D}$, denote by $\tau_{\zeta}\in\Aut(\set{D})$ defined by $\tau_{\zeta}(\xi)=\frac{\xi+\zeta}{1+\cjg{\zeta}\xi}$. We need some preparation.

\begin{lem}\label{flowboxUV} Given $x\in\manis{M}{E}$, there exist flow boxes $\set{D}\times\set{T}\simeq U\subset V$ containing~$x$ and $r_0\in\intoo{0}{1}$ such that
  \[U\subset\bigcup\limits_{t\in\set{T}}\phi_t\left(\rD{\frac{r_0}{3}}\right)\quad\text{and}\quad\bigcup\limits_{y\in U}\phi_y(\rD{r_0})\subset V.\]
%  Moreover, there exist a measurable section $y\mapsto\phi_y$, $y\in\manis{M}{E}$, such that $\phi_{\phi_t(\zeta)}=\phi_t\circ\tau_{\zeta}$, $t\in\set{T}$, $\zeta\in\rD{r_0}$. 
\end{lem}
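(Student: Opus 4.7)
The plan is to combine Brody-hyperbolicity ($\eta\leq c_0$) with a positive lower bound on~$\eta$ on a compact neighbourhood of~$x$ disjoint from~$\mani{E}$, so as to obtain a two-sided comparison between the ambient metric~$\metm{M}$ and the leafwise Poincar\'e metric on that neighbourhood. Note that completeness of the Poincar\'e metric is not used here; only Brody-hyperbolicity and the fact that $x\notin\mani{E}$.

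First, since~$x\notin\mani{E}$ and~$\mani{E}$ is closed, I would choose a flow box $V_0\simeq\set{D}\times T_0$ through~$x$ whose closure is contained in~$\manis{M}{E}$. On the compact set~$\adh{V_0}$, the lower semi-continuous function~$\eta$ attains its minimum $\eta_0$, which is positive because every leaf is hyperbolic. Combined with Brody-hyperbolicity, Proposition~\ref{propeta} then gives
\[\frac{\eta_0}{2}\norm{v}_{\PC}\leq\norm{v}_{\metm{M}}\leq\frac{c_0}{2}\norm{v}_{\PC}\]
for every leafwise tangent vector~$v$ based at a point of~$\adh{V_0}$. Consequently, the $\metm{M}$- and Poincar\'e-lengths of any leafwise curve contained in~$\adh{V_0}$ are comparable up to uniform constants.

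Next, I would fix $r_0\in\intoo{0}{1}$ small and choose nested flow boxes $U=\rD{s_1}\times T\subset V=\rD{s_2}\times T\subset V_0$, with $0<s_1<s_2<1$ and $T\subset T_0$ a small transversal through~$x$, as follows. By the upper-bound half, the image $\phi_y(\rD{r_0})$ has $\metm{M}$-diameter at most $\tfrac{c_0}{2}\ln\frac{1+r_0}{1-r_0}$, provided the Poincar\'e geodesics from~$0$ out to~$\rD{r_0}$ stay in~$\adh{V_0}$. Taking~$r_0$ small enough and then~$s_2$ and~$T$ well inside~$V_0$, I can guarantee that for every $y\in U$, the plaque-like region $\phi_y(\rD{r_0})$ is contained in the plaque of~$V$ through~$y$, which yields the second inclusion. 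For the first inclusion, I use the lower-bound half: the plaque of~$U$ through $t=\pi_T(y)$ has Poincar\'e diameter at most $\tfrac{2}{\eta_0}$ times its $\metm{M}$-diameter; shrinking~$s_1$ and~$T$ further makes this quantity smaller than $\ln\frac{1+r_0/3}{1-r_0/3}$, so every $y\in U$ is of the form $\phi_t(\xi)$ with $|\xi|<r_0/3$, i.e.~$y\in\phi_t(\rD{r_0/3})$.

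The main technical obstacle is the simultaneous calibration of~$r_0$,~$s_1$,~$s_2$ and the transversal~$T$: the first inclusion bounds how large plaques of~$U$ can be (via $r_0/3$ and thus via~$\eta_0$), while the second inclusion bounds how~$V$ must extend around~$U$ to contain each $\phi_y(\rD{r_0})$ (via~$c_0$). The two-sided comparison on~$\adh{V_0}$ is precisely what makes these competing smallness conditions compatible. This is essentially the argument already used in~\cite[Proposition~3.3]{DNSII} and appearing in~Lemma~\ref{metPCcompletelocimpglob}, transposed from the global Brody setting to a local one.
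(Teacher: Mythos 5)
Your proposal is correct and follows essentially the same approach as the paper's own (very terse) proof: first fix a larger flow box, use Brody-hyperbolicity (upper bound on $\eta$) to obtain the second inclusion by bounding the $\metm{M}$-size of $\phi_y(\rD{r_0})$, then use a positive lower bound on $\eta$ near $x$ to shrink $U$ and get the first inclusion. Your extra remarks—deriving the lower bound $\eta_0>0$ from lower semi-continuity and compactness, and translating both bounds into a two-sided comparison of metrics—are just the filled-in details of what the paper states in one sentence.
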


\begin{proof} First fix a flow box~$V$ containing~$x$. Since $\eta$ is bounded above, we can find~$r_0\in\intoo{0}{1}$ and~$U$ sufficiently small such that the map
  \[\rD{r_0}\times U\to\mani{M},\qquad (\zeta,y)\mapsto\phi_y(\zeta)\]
  is injective and with values in~$V$. Now, since~$\eta$ is bounded from below near~$x$, shrinking~$U$ if necessary, we can suppose $U\subset\cup_{t\in\set{T}}\phi_t\left(\rD{\frac{r_0}{3}}\right)$. 
\end{proof}

\begin{lem}\label{estimLebzeta} For $\delta>0$ and $\mu$-almost every $x\in\manis{M}{E}$, there exists $\eps_0>0$ satisfying the following. For $\eps>0$ sufficiently small, there exists $C>0$ such that for all sufficiently large $R>0$ and $y\in B_R(x,\eps_0)$,
    \[C^{-1}e^{-3R}\leq\Leb\left(\left\{(\zeta,\theta)\in\rD{r_0}\times\intcc{-\pi}{\pi};~\dhimps{\adhDR{R}}{\phi_y\circ\tau_{\zeta}\circ r_{\theta}}{\phi_y}<\eps\right\}\right)\leq Ce^{-3(1-\delta)R}.\]
      Here, $r_0$ is given by the previous lemma.
\end{lem}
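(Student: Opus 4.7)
The plan is to estimate the three-dimensional Lebesgue measure on $\rD{r_0} \times \intcc{-\pi}{\pi}$ by fibering over the $\zeta$-variable. On each $\zeta$-fiber the measure of admissible $\theta$ will be controlled by Lemma~\ref{estimLebtheta}, contributing the factor $e^{-(1-\delta)R}$. The remaining factor $e^{-2(1-\delta)R}$ in the $\zeta$-direction reflects the geometric fact that $\tau_\zeta$ is a hyperbolic translation of the Poincar\'e disk whose displacement at a point $\xi \in \adhDR{R}$ at Poincar\'e distance $\rho$ from the axis is approximately $\ell \cosh \rho$, where $\ell = 2\tanh^{-1}\Cmod{\zeta}$; for $\rho \sim R$ this amplification by $e^R/2$ forces $\Cmod{\zeta} \lesssim e^{-R}$ under the Bowen condition, producing a two-dimensional measure $\sim e^{-2R}$ of admissible $\zeta$.

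\textbf{Upper bound.} Fix flow boxes $U \subset V$ containing $x$ as in Lemma~\ref{flowboxUV}, and shrink $\eps_0$ so that $B_R(x,\eps_0) \subset U$. For admissible $(\zeta,\theta)$ and $y \in B_R(x,\eps_0)$, evaluating the Bowen condition at $\xi = 0$ gives $\phi_y(\zeta) \in \set{B}(y,\eps) \subset U$; hence $z := \phi_y(\zeta)$ lies in the plaque of $y$. Since $\phi_y \circ \tau_\zeta$ and $\phi_z$ are both uniformizations of $\leafu{y}$ sending $0$ to $z$, there exists a unique $\theta_0(\zeta)$ with $\phi_y \circ \tau_\zeta = \phi_z \circ r_{\theta_0(\zeta)}$. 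If $\theta,\theta_1$ both lie in the fiber $S_\zeta$, then the triangle inequality followed by a rotational change of variable on $\adhDR{R}$ yields
\[
\dhimps{\adhDR{R}}{\phi_z \circ r_{\theta - \theta_1}}{\phi_z} < 2\eps,
\]
and applying Lemma~\ref{estimLebtheta} at the base point $z$ (after observing that $d_R(z,y)$ is bounded by a constant depending only on $c_0$ and $r_0$, so that $z$ lies in a slightly enlarged Bowen neighborhood of $x$) produces $\Leb(S_\zeta) \leq C e^{-(1-\delta)R}$. To bound the set of admissible $\zeta$, I would invoke Dinh--Nguy\^{e}n--Sibony's Birkhoff-type theorem~\cite{DNS12}: for $\mu$-a.e.~$x$ and sufficiently large $R$, one can find $\xi^* \in \adhDR{R}$ at Poincar\'e distance $\geq (1-\delta)R$ from the axis of $\tau_\zeta$ with $\phi_y(\xi^*)$ uniformly away from $\mani{E}$. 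The Bowen condition at such a $\xi^*$ translates, via the local equivalence between Poincar\'e and ambient distance when $\eta$ is bounded below, into $\Cmod{\zeta} \lesssim e^{-(1-\delta)R}$; hence the admissible $\zeta$'s have Lebesgue measure $\lesssim e^{-2(1-\delta)R}$. Multiplying the two factors gives the desired upper bound $C e^{-3(1-\delta)R}$.

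\textbf{Lower bound and main obstacle.} For the lower bound, I would directly verify that if $\Cmod{\zeta} \leq c_3 e^{-R}$ and $\Cmod{\theta} \leq c_3 e^{-R}$ for $c_3$ sufficiently small in terms of $\eps$ and $c_0$, then the Poincar\'e displacement of $\tau_\zeta \circ r_\theta$ on the whole of $\adhDR{R}$ is bounded by a constant (since $\ell \cosh R \sim c_3$ and $\Cmod{\theta} e^R \sim c_3$), and the ambient displacement, controlled by $\eta/2 \leq c_0/2$ times the Poincar\'e displacement, is $< \eps$; the resulting set has Lebesgue measure $\gtrsim c_3^3 e^{-3R}$. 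The main obstacle lies in the $\zeta$-upper-bound step: one must simultaneously realize a point $\xi^* \in \adhDR{R}$ both at Poincar\'e distance $\sim (1-\delta)R$ from the arbitrary axis of $\tau_\zeta$ (a geometric condition, essentially automatic because a geodesic has measure zero in $\adhDR{R}$ and most of this disk lies at distance $\sim R$ from any geodesic through $0$) and such that $\phi_y(\xi^*)$ stays a definite distance from $\mani{E}$ (a dynamical condition requiring the ergodic-theoretic input of~\cite{DNS12}). Balancing these two constraints is precisely where the $(1-\delta)$ factor, reflecting the Birkhoff average of the time spent near the singular set, enters the exponent.
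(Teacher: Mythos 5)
Your fibering strategy and your $\theta$-fiber reduction are sound, and in fact the paper itself uses exactly this kind of Fubini-type fibering (relating $B_\eps(y)$, $A_\eps(y)$ and $C_\eps$) in the proof of Proposition~\ref{hpm=hpmT+2}; the lower bound argument is also essentially the paper's. The fatal gap is in the $\zeta$-upper-bound step. You want to pick $\xi^* \in \adhDR{R}$ at Poincar\'e distance $\geq (1-\delta)R$ from the axis of $\tau_\zeta$ and away from $\mani{E}$, and conclude $\Cmod{\zeta} \lesssim e^{-(1-\delta)R}$ from the Bowen condition. But the Poincar\'e-to-ambient comparison (Lemma~\ref{lemdefeps0}) is one-sided and only applies when the \emph{Poincar\'e} displacement is already known to be $\leq \eps_0$. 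At your $\xi^*$, if $\Cmod{\zeta} \gg e^{-(1-\delta)R}$, the Poincar\'e displacement $\dPCp{\xi^*}{\tau_{\zeta}(e^{i\theta}\xi^*)}$ is of order $\Cmod{\zeta}\,e^{(1-\delta)R} \gg 1$; two leafwise far-apart points can perfectly well be ambient-close (the leaf can recur), so knowing $\phi_y(\xi^*)\notin U_3^{\sing}$ gives no contradiction with $\dhimp{\phi_y(\xi^*)}{\phi_y(\tau_\zeta(e^{i\theta}\xi^*))} < \eps$.

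The correct move, which drives Lemmas~\ref{disttauzeta} and~\ref{zetapasprescrits}, is to put $\xi$ precisely at the radius where $\dPCp{\xi}{\tau_\zeta(e^{i\theta}\xi)} \in [\eps_1,\eps_2] \subset [0,\eps_0]$; but that critical radius is roughly $\log(\eps/\norm{(\zeta,\theta)}_\infty)$, hence varies with $\zeta$, and it may happen to land in one of the ``bad'' windows $\intoo{R_{i,2}}{R_{i,1}}$ where much of the circle is mapped into $U_1^{\sing}$. There the contradiction evaporates, so the set of admissible $(\zeta,\theta)$ is \emph{not} contained in a ball of radius $e^{-(1-\delta)R}$; its diameter can be $\sim\eps e^{-R_{0,2}}$ with $R_{0,2}$ as small as~$4$. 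What saves the estimate is that the argument applies to the \emph{difference} of any pair $(\zeta_1,\theta_1),(\zeta_2,\theta_2)$ in the set (via Lemma~\ref{compzeta12theta12}), forcing all mutual distances into the same union of scale-windows. This is the ``prescribed-steps'' (Cantor-like) structure of Lemma~\ref{zetapasprescrits}, and the measure bound then comes from the combinatorial covering Lemma~\ref{mesepspresc}, not from a diameter bound. Without this step your argument cannot close; the Birkhoff input alone does not produce a uniform-in-$\zeta$ choice of good $\xi^*$ at the relevant radius.
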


\begin{proof}[Proof of Proposition~\ref{hpm=hpmT+2}.] We take for granted Lemmas~\ref{estimLebtheta} and~\ref{estimLebzeta} and show how they give us our result. Consider~$x\in\manis{M}{E}$ satisfying the conclusions of the lemmas and fix $\delta>0$. Let $r_0\in\intoo{0}{1}$, $\set{D}\times\set{T}\simeq U\subset V$ be given by Lemma~\ref{flowboxUV} and $0<\eps<\frac{\eps_0}{3}$ sufficiently small to have the conclusions of both other lemmas. Using~\eqref{desintmu}, we get
  \[\mu(B_R(x,\eps))=\int_{\set{T}}\left(\int_{\set{D}}f_t(z)\chi_{B_R(x,\eps)}(z,t)\metPC(z,t)\right)d\nu(t),\]
  where $\chi_B$ denotes the characteristic function of a Borel set~$B$.  Now, since $x\in\manis{M}{E}$ is still far from the singular set, $\metPC$ is equivalent to the Lebesgue measure and by the Harnack inequality, there exists $c>0$ such that $c^{-1}\leq f_t\leq c$ on $U$. Therefore, there exists $c'>1$ with
  \[c'^{-1}\mu(B_R(x,\eps))\leq\int_{\pi_{\set{T}}(B_R(x,\eps))}\Leb\left(B_R(x,\eps)\cap(\set{D}\times\{t\})\right)d\nu(t)\leq c'\mu(B_R(x,\eps)).\]
  Here, the Lebesgue measure is the one of~$\set{D}$ in the flow box $U$. For $t\in\pi_{\set{T}}(B_R(x,\eps))$, choose $y_t\in B_R(x,\eps)\cap(\set{D}\times\{t\})$. Then, $B_R(x,\eps)\cap(\set{D}\times\{t\})\subset B_R\left(y_t,2\eps\right)\cap(\set{D}\times\{t\})$. Moreover, for $y_t\in B_R\left(x,\frac{\eps}{2}\right)$, $B_R\left(y_t,\frac{\eps}{2}\right)\cap(\set{D}\times\{t\})\subset B_R(x,\eps)\cap(\set{D}\times\{t\})$. Hence,
  \begin{equation}\label{compmunu}\begin{aligned}\mu(B_R(x,\eps))&\leq c'\nu(\pi_{\set{T}}(B_R(x,\eps)))\sup_{t\in\pi_{\set{T}}(B_R(x,\eps))}\Leb(B_R(y_t,2\eps)\cap(\set{D}\times\{t\}));\\
  \mu(B_R(x,\eps))&\geq c'^{-1}\nu\left(\pi_{\set{T}}\left(B_R\left(x,\eps/2\right)\right)\right)\inf_{t\in\pi_{\set{T}}\left(B_R\left(x,\eps/2\right)\right)}\Leb\left(B_R\left(y_t,\eps/2\right)\cap\left(\set{D}\times\{t\}\right)\right).\end{aligned}\end{equation}
  So, changing~$\eps$ if necessary, it will be sufficient to bound above and below the Lebesgue measure of $B_R(y,\eps)\cap(\set{D}\times\{t\})$, for $y\in B_R(x,\eps_0)\cap(\set{D}\times\{t\})$. Let $y'\in B_R(y,\eps)\cap(\set{D}\times\{t\})$. Since~$y$ and~$y'$ are in the same plaque, there is $\zeta\in\rD{r_0}$ such that $y'=\phi_y(\zeta)$. Thus,
  \[B_R(y,\eps)\cap(\set{D}\times\{t\})=\left\{\phi_y(\zeta);\zeta\in\rD{r_0},\exists\theta\in\intcc{-\pi}{\pi},\dhimps{\adhDR{R}}{\phi_y\circ\tau_{\zeta}\circ r_{\theta}}{\phi_y}<\eps\right\}.\]
  Denote by
  \[\begin{aligned}A_{\eps}(y)&=\left\{\zeta\in\rD{r_0},\exists\theta\in\intcc{-\pi}{\pi},\dhimps{\adhDR{R}}{\phi_y\circ\tau_{\zeta}\circ r_{\theta}}{\phi_y}<\eps\right\},\\
      B_{\eps}(y)&=\left\{(\zeta,\theta)\in\rD{r_0}\times\intcc{-\pi}{\pi},\dhimps{\adhDR{R}}{\phi_y\circ\tau_{\zeta}\circ r_{\theta}}{\phi_y}<\eps\right\},\\
      C_{\eps}(y')&=\left\{\theta\in\intcc{-\pi}{\pi},\dhimps{\adhDR{R}}{\phi_{y'}\circ r_{\theta}}{\phi_{y'}}<\eps\right\},\quad y'\in B_R(y,\eps)\cap\left(\set{D}\times\{t\}\right).\end{aligned}\]
Since the Poincar\'e metric in the flow box is equivalent to the Lebesgue measure, we have
\begin{equation}\label{compAepsBR}c^{-1}\Leb\left(A_{\eps}(y)\right)\leq\Leb\left(B_R(y,\eps)\cap(\set{D}\times\{t\})\right)\leq c\Leb\left(A_{\eps}(y)\right).\end{equation}
Moreover,
\[\begin{aligned}\Leb\left(B_{\eps}(y)\right)&\leq\Leb(A_{\eps}(y))\sup_{\xi\in\pi_{\zeta}(B_{\eps}(y))}\Leb(C_{2\eps}(\phi_y(\xi))),\\
\Leb\left(B_{2\eps}(y)\right)&\geq\Leb\left(A_{\eps}(y)\right)\inf_{\xi\in\pi_{\zeta}\left(B_{\eps}(y)\right)}\Leb\left(C_{\eps}(\phi_y(\xi))\right),\end{aligned}\]
where $\pi_{\zeta}$ is the projection on the disk from $\set{D}\times\intcc{-\pi}{\pi}$. Now, applying Lemmas~\ref{estimLebtheta} and~\ref{estimLebzeta} and coming back to~\eqref{compAepsBR}, we get
\[C^{-1}e^{-(2+\delta)R}\leq\Leb\left(B_R(y,\eps)\cap(\set{D}\times\{t\})\right)\leq Ce^{-(2-3\delta)R}.\]
By~\eqref{compmunu}, we obtain
\[(c'C)^{-1}\mu(B_R(x,\eps))e^{(2-3\delta)R}\leq\nu(\pi_{\set{T}}(B_R(x,\eps)))\leq c'Ce^{(2+\delta)R}\mu(B_R(x,\eps)).\]
Letting $R$ go to infinity and noting that $\eps$ can be chosen arbitrarily small, this implies
\[h^{\pm}(\mu,x)-2-\delta\leq\wt{h}^{\pm}(\mu,x)\leq h^{\pm}(\mu,x)-2+3\delta.\]
Since~$\delta$ was chosen arbitrarily, we conclude the proof.
\end{proof}

\subsection{Subsets of $\set{R}^n$ with prescribed steps} This subsection is devoted to studying briefly subsets of $\set{R}^n$ with distances in a pre-defined set. As will appear in the next subsection, the sets appearing in Lemmas~\ref{estimLebtheta} and~\ref{estimLebzeta} will be of that kind (for $n=1$ or~$3$) and we are especially interested in estimating their Lebesgue measure. The sets we study look like the union of intervals we obtain when constructing Cantor sets by induction. Let us be more precise.

\begin{defn} Let $N\in\set{N}$ and for $i\in\intent{0}{N}$, $j\in\{1,2\}$, take $\eps_{i,j}\geq0$. Suppose that
  \[0<\eps_{i+1,2}<\eps_{i,1}<\eps_{i,2},\qquad i\in\intent{0}{N-1},\]
  and $\eps_{N,1}=0$. A Borel subset $A\subset\set{R}^n$ is said to have \emph{prescribed $(\eps_{i,j})_{i,j}$-steps} if for any $x,y\in A$, there exists $i\in\intent{0}{N}$ such that $\norm{x-y}\in\intcc{\eps_{i,1}}{\eps_{i,2}}$.
\end{defn}

In particular, it should be noted that $\diam(A)\leq\eps_{0,2}$. The following gives an upper bound to the Lebesgue measure of those sets. The bound is loose but will be sufficient.

\begin{lem}\label{mesepspresc} Let $A$ be a Borel subset of $\set{R}^n$ with prescribed $(\eps_{i,j})_{i\in\intent{0}{N},j\in\{1,2\}}$-steps. Suppose that for $i\in\intent{1}{N}$, $\eps_{i,2}<\frac{\eps_{i-1,1}}{2}$. Then, if $V_n=\Vol(\set{B}(0,1))$ in $\set{R}^n$,
  \[\Leb(A)\leq V_n\eps_{N,2}^n3^{nN}\times\prod\limits_{i=0}^{N-1}\left(\frac{\eps_{i,2}}{\eps_{i,1}}\right)^n.\]
\end{lem}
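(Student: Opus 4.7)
The plan is to prove this by a hierarchical clustering argument: the step structure gives rise to a rooted tree of equivalence classes whose branching factor is controlled by a simple volume packing.

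First, for each level $i \in \intent{0}{N}$, I define the relation $\sim_i$ on $A$ by $x \sim_i y$ iff $\norm{x-y} \leq \eps_{i,2}$. The key use of the gap hypothesis $\eps_{i,2} < \eps_{i-1,1}/2$ is to make $\sim_i$ transitive: if $x \sim_i y \sim_i z$ then $\norm{x-z} \leq 2\eps_{i,2} < \eps_{i-1,1}$, and since $\norm{x-z}$ is required to lie in one of the prescribed intervals $\intcc{\eps_{j,1}}{\eps_{j,2}}$, it is forced into $\intcc{0}{\eps_{i,2}}$. Thus each $\sim_i$ is genuinely an equivalence relation, level-$i'$ classes refine level-$i$ classes whenever $i\leq i'$, level $0$ consists of a single class (because $\diam A \leq \eps_{0,2}$), and level-$N$ classes have diameter at most $\eps_{N,2}$. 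Covering each level-$N$ class by a ball of radius $\eps_{N,2}$ already yields $\Leb(A) \leq M_N \, V_n \, \eps_{N,2}^n$, where $M_N$ is the number of level-$N$ classes, so it is enough to prove $M_N \leq 3^{nN}\prod_{i=0}^{N-1}(\eps_{i,2}/\eps_{i,1})^n$.

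Next, I would bound the branching factor from level $i-1$ to level $i$. Inside a fixed level-$(i-1)$ class $C$ (hence of diameter $\leq \eps_{i-1,2}$), pick one representative per level-$i$ subclass of $C$. Two such representatives belong to different level-$i$ classes, so their distance exceeds $\eps_{i,2}$; by the gap structure it must then actually lie in $\intcc{\eps_{i-1,1}}{\eps_{i-1,2}}$. The open balls of radius $\eps_{i-1,1}/2$ centered at these representatives are therefore pairwise disjoint and contained in a ball of radius $\eps_{i-1,2}+\eps_{i-1,1}/2 \leq \tfrac{3}{2}\eps_{i-1,2}$ (using $\eps_{i-1,1}<\eps_{i-1,2}$). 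A direct volume comparison then gives at most $3^n(\eps_{i-1,2}/\eps_{i-1,1})^n$ level-$i$ subclasses inside $C$. Multiplying these branching bounds for $i=1,\dots,N$ yields the desired estimate on $M_N$.

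The only point requiring a bit of care is checking that the single hypothesis $\eps_{i,2}<\eps_{i-1,1}/2$ does double duty: it must simultaneously ensure the transitivity of $\sim_i$ and force the lower bound $\eps_{i-1,1}$ on the distance between representatives of distinct level-$i$ subclasses of a common level-$(i-1)$ class. Once both consequences are secured, the rest of the proof is a clean two-line packing estimate iterated $N$ times, and the loss of the factor $3^n$ at each level accounts for the $3^{nN}$ in the final bound.
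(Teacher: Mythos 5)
Your proof is correct and takes essentially the same approach as the paper, which argues by induction on $N$: both reduce to the same volume-packing estimate (disjoint balls of radius $\eps_{i-1,1}/2$ inside a ball of radius $\tfrac{3}{2}\eps_{i-1,2}$) to bound the branching from level $i-1$ to level $i$. Your top-down hierarchy of nested equivalence relations $\sim_i$ is a clean repackaging of the paper's bottom-up induction, with the gap hypothesis $\eps_{i,2}<\eps_{i-1,1}/2$ playing the same double role in both.
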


\begin{proof} Let us show by induction on~$N$ that such an~$A$ is contained in the union of at most $p_N=3^{nN}\times\prod_{i=0}^{N-1}\left(\frac{\eps_{i,2}}{\eps_{i,1}}\right)^n$ balls of radius $\eps_{N,2}$. If $N=0$, this is trivial. Suppose that this was proven at rank $N-1$. Take~$A$ as in the statement of the lemma. Then, we can apply the induction hypothesis to $\eps_{i,j}$, $i\in\intent{0}{N-2},j\in\{1,2\}$, $\eps_{N-1,2}$ and $\eps'_{N-1,1}=0$. This gives $p_{N-1}$ balls of radius $\eps_{N-1,2}$ which cover~$A$. Now, each of these $p_{N-1}$ balls has prescribed $(\eps_{i,j})_{i\in\{N-1,N\},j\in\{1,2\}}$-steps. Indeed, if $B$ is one of these balls and if $x,y\in A\cap B$, then
  \[\norm{x-y}\in\intcc{0}{2\eps_{N-1,2}}\cap\left(\bigcup\limits_{i=0}^N\intcc{\eps_{i,1}}{\eps_{i,2}}\right)=\intcc{0}{\eps_{N,2}}\cup\intcc{\eps_{N-1,1}}{\eps_{N-1,2}}.\]
    Here, we use that $\eps_{N-1,2}<\frac{\eps_{N-2,1}}{2}$. Since $p_N=p_{N-1}p'_1$, with $p'_1=3^n\left(\frac{\eps_{N-1,2}}{\eps_{N-1,1}}\right)^n$, this is enough to prove the induction hypothesis for $N=1$. This means that for $x,y\in A$, either $\norm{x-y}\leq\eps_{1,2}$, or $\norm{x-y}\in\intcc{\eps_{0,1}}{\eps_{0,2}}$. Let $x_1,\dots,x_r$ be a maximal $\eps_{1,2}$-separated subset of~$A$. Since the $x_i$ are $\eps_{1,2}$-separated, $\norm{x_i-x_j}>\eps_{1,2}$ for $i\neq j$ and the prescribed steps imply that $\norm{x_i-x_j}\geq\eps_{0,1}$. Therefore, the balls $\left(\set{B}\left(x_i,\frac{\eps_{0,1}}{2}\right)\right)_{i\in\intent{1}{r}}$ are disjoint. Since $\diam(A)\leq\eps_{0,2}$, they are also all included in $\set{B}\left(x_1,\eps_{0,2}+\frac{\eps_{0,1}}{2}\right)$. Hence,
    \[\begin{aligned}V_n\left(\eps_{0,2}+\frac{\eps_{0,1}}{2}\right)^n=\Vol\left(\set{B}\left(x_1,\eps_{0,2}+\frac{\eps_{0,1}}{2}\right)\right)&\geq\Vol\left(\bigcup\limits_{i=1}^r\set{B}\left(x_i,\frac{\eps_{0,1}}{2}\right)\right)\\
        &\geq\sum\limits_{i=1}^r\Vol\left(\set{B}\left(x_i,\frac{\eps_{0,1}}{2}\right)\right)=rV_n\left(\frac{\eps_{0,1}}{2}\right)^n,\end{aligned}\]
  using first the inclusion and second the disjointness of the balls. From the above inequality, it follows that $r\leq\left(\frac{2\eps_{0,2}+\eps_{0,1}}{\eps_{0,1}}\right)^n\leq\left(3\frac{\eps_{0,2}}{\eps_{0,1}}\right)^n=p_1$. Moreover, by maximality of the $\eps_{1,2}$-separated family $x_1,\dots,x_r$, $A\subset\cup_{i=1}^r\set{B}(x_i,\eps_{1,2})$. This completes the proof.
\end{proof}

\subsection{Geometric setup} In this subsection, we prepare some geometric and dynamical ground. In particular, we need Dinh, Nguy\^{e}n and Sibony's Birkhoff-type Theorem. Let $\fol=\plfol$ be a Brody-hyperbolic \textsingfol{} on a compact complex manifold. Suppose that~$\fol$ has isolated singularities. For $x\in\manis{M}{E}$, fix $\phi_x\colon\set{D}\to\leafu{x}$ a uniformization of $\leafu{x}$. For $r\in\intoo{0}{1}$ and the corresponding $R=\ln\frac{1+r}{1-r}$, define
\[m_{x,R}=\frac{1}{M_R}(\phi_x)_*\left(\log^+\frac{r}{\Cmod{\zeta}}\metPC\right),\]
where $\log^+$ stands for $\max(\log,0)$, $\metPC$ denotes the Poincar\'e metric on the disk and
\[M_R=\int_{\set{D}}\log^+\frac{r}{\Cmod{\zeta}}\metPC=-2\pi\log(1-r^2)\sim_{R\to+\infty} 2\pi R.\]

\begin{thm}[Dinh--Nguy\^{e}n--Sibony~{\cite[Theorem~7.1]{DNS12}} (see also~{\cite[Theorem~5.36]{surVANG21}})] Keep the above notations and hypotheses. Let~$\mu$ be an ergodic harmonic measure on~$\fol$. Then, for $\mu$-almost every $x\in\manis{M}{E}$, $m_{x,R}$ converges weakly to~$\mu$ when~$R$ tends to infinity.
\end{thm}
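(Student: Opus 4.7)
The plan is to reinterpret this statement as a Birkhoff-type ergodic theorem for the leafwise heat diffusion associated to the harmonic measure $\mu$. On the Poincar\'{e} disk, the weight $\log^+\frac{r}{\Cmod{\zeta}}$ is, up to a normalizing constant, the Green function of the Euclidean Laplacian on $\rD{r}$ with pole at $0$; equivalently, the measure $\log^+\frac{r}{\Cmod{\zeta}}\,\metPC$ can be rewritten, via Riesz's formula, as a time integral of the heat kernel of $\frac{1}{2}\LapP$ against the Dirac mass at the origin, run up to a hyperbolic time $T(R)\sim R$. After division by the normalization $M_R\sim 2\pi R$, the measure $m_{x,R}$ thus plays the role of a normalized time average $\frac{1}{T}\int_0^T P_s\delta_x\,ds$ of the leafwise heat semigroup applied at $x$, and the desired weak convergence becomes a pointwise ergodic statement for this semigroup.

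First I would exploit the defining feature of harmonic measures: $\mu$ is invariant under the leafwise heat semigroup $(P_t)_{t\geq 0}$ generated by $\LapP$ (after lifting to the sample-path space of the foliated Brownian motion), and ergodicity of $\mu$ in the foliated sense is equivalent to ergodicity of this Markov dynamics. A standard $L^2$ mean ergodic theorem for the semigroup $P_t$ then produces mean convergence $\int\varphi\,dm_{x,R}\to\int\varphi\,d\mu$ in $L^2(\mu)$ for every bounded continuous test function $\varphi$ on $\mani{M}$. To upgrade this to almost-sure convergence in $x$, I would use a second-moment estimate for $x\mapsto\int\varphi\,dm_{x,R}-\int\varphi\,d\mu$, combined with a Borel--Cantelli argument along a slowly increasing subsequence $R_n$; the gaps can be filled in by a Hopf-style maximal inequality for $P_t$ using the monotonicity properties of the weight in $R$. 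Taking $\varphi$ in a countable dense family of continuous functions on $\mani{M}$ then yields weak convergence for $\mu$-almost every $x$.

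The main obstacle will be twofold. First, the deterministic weight $\log^+\frac{r}{\Cmod{\zeta}}$ in the definition of $m_{x,R}$ is not literally a heat-kernel occupation time, and matching the two requires either a Tauberian comparison between the Green potential on $\DR{R}$ and the heat-semigroup average over hyperbolic times $[0,R]$, or a direct spectral representation using eigenfunction expansions of $\LapP$; the asymptotic $M_R = -2\pi\log(1-r^2)\sim 2\pi R$ is precisely what makes the two time scales agree. Second, the presence of the singular set $\mani{E}$ must be controlled: although each leaf is uniformized by $\set{D}$ so that the pulled-back Brownian motion on the leaf is well-defined and complete, one must verify tightness of $m_{x,R}$ on $\mani{M}$ and check that the sample paths do not pile up near $\mani{E}$ in a way that would spoil the convergence. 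Since $\mu$ gives no mass to $\mani{E}$ and $\fol$ is Brody-hyperbolic, standard compactness arguments on $\mani{M}$ close this gap and make the reduction to the ergodic theorem for $P_t$ rigorous.
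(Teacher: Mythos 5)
This statement is quoted in the paper as a black-box citation of Dinh--Nguy\^{e}n--Sibony \cite[Theorem~7.1]{DNS12}; the paper contains no proof of it, so there is no internal argument to compare against line by line. Judged against the actual proof in \cite{DNS12}, your sketch correctly identifies the governing framework: interpret $m_{x,R}$ through the leafwise heat semigroup $(D_t)_{t\geq0}$, use that harmonic measures are precisely the $D_t$-invariant measures, prove a pointwise ergodic theorem for this Markov semigroup, and then transfer the result to the geometric averages. The asymptotic $M_R\sim2\pi R$ is indeed the reason the Nevanlinna normalization matches the time normalization.

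Two points deserve sharpening. First, $\log^+\frac{r}{\Cmod{\zeta}}$ is the Green function of the \emph{Dirichlet} problem on $\rD{r}$, i.e.\ $\int_0^{\infty}p_t^{\rD{r}}(0,\zeta)\,dt$ with the \emph{killed} heat kernel, not a time-truncated integral $\int_0^{T(R)}p_t(0,\zeta)\,dt$ of the free heat kernel on $\set{D}$. The passage from one to the other is a genuine comparison lemma (controlling what happens near $\partial\DR{R}$, where the hyperbolic drift gives exit times $\approx R$), and in \cite{DNS12} this accounts for a substantial part of the work; calling it ``Riesz's formula'' undersells the step. Second, the $L^2$ mean ergodic theorem together with Borel--Cantelli along a subsequence is not by itself enough: ergodicity alone gives no summable variance decay, so the maximal inequality you mention in passing is not an optional filler but the crux, and it must be proved for the positivity-preserving semigroup $D_t$ (a Chacon--Ornstein/Hopf-type inequality, which is in fact how \cite{DNS12} proceeds). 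With those two caveats made explicit, your outline is a faithful reconstruction of the cited proof rather than an alternative route.
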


Now, fix an $x\in\manis{M}{E}$ such that $m_{x,R}\to\mu$. We use this convergence to control the time that $\leafu{x}$ spends near the singular set. More precisely, we need the following.

\begin{lem}\label{lemsetupsing} Suppose moreover that the Poincar\'e metric of~$\fol$ is complete. Fix $\delta>0$. There exist $0<\rho_1<\rho_2<\rho_3$ satisfying the following. Denote by $U_j^{\sing}=\{y\in\manis{M}{E};~\dhimpsing{E}{y}\leq\rho_j\}$, for $j\in\{1,2,3\}$.
  \begin{enumerate}[label=(\roman*),ref=\roman*]
  \item \label{rho1rho2sep} If $y\in\manis{M}{E}$ and $j\in\{1,2\}$ are such that $\dhimpsing{E}{y}\leq\rho_j$ and $y'\in\leafu{y}$ is such that $\dhimpsing{E}{y'}\geq\rho_{j+1}$, then $\dPC{y}{y'}\geq4$.
  \item \label{rho2petit} $\mu\left(U_3^{\sing}\right)<\frac{\delta}{2}$. In particular, for all large enough $R>0$, $m_{x,R}\left(U_3^{\sing}\right)<\delta$.
    \item \label{x>3} $x\notin U_3^{\sing}$.
  \end{enumerate}
\end{lem}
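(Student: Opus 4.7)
The plan is to first fix $\rho_3$ by a measure-theoretic argument, then successively shrink to $\rho_2$ and $\rho_1$ via a compactness argument that exploits the hypothesis that the Poincar\'e metric is complete. Since the singularities are isolated and~$\mani{M}$ is compact, $\mani{E}$ is finite; and as~$\mu$ is an ergodic harmonic probability measure that disintegrates into leafwise harmonic densities on flow boxes of~$\manis{M}{E}$, one has $\mu(\mani{E})=0$, so $\rho\mapsto\mu\left\{y\setst\dhimpsing{E}{y}\leq\rho\right\}$ decreases to~$0$ as $\rho\to0^+$.

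Choose $\rho_3>0$ with $\rho_3<\dhimpsing{E}{x}$, with $\mu\left\{y\setst\dhimpsing{E}{y}\leq 2\rho_3\right\}<\delta/2$, and with the balls $\set{B}(p,2\rho_3)$, $p\in\mani{E}$, pairwise disjoint. Then~(\ref{x>3}) is immediate and $\mu(U_3^{\sing})<\delta/2$. For the $m_{x,R}$ part of~(\ref{rho2petit}), pick a continuous $\chi\colon\mani{M}\to\intcc{0}{1}$ equal to~$1$ on $\left\{y\setst\dhimpsing{E}{y}\leq\rho_3\right\}$ and vanishing outside $\left\{y\setst\dhimpsing{E}{y}\leq 2\rho_3\right\}$; the weak convergence $m_{x,R}\to\mu$ then gives $m_{x,R}(U_3^{\sing})\leq\int\chi\,dm_{x,R}\to\int\chi\,d\mu<\delta/2<\delta$, so this inequality holds for all large~$R$.

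The core is~(\ref{rho1rho2sep}), which I reduce to the following claim: for every $p\in\mani{E}$ and every $\rho'>0$ there exists $\rho\in(0,\rho')$ such that $\dhimp{y}{p}\leq\rho$, $y'\in\leafu{y}$ and $\dhimpsing{E}{y'}\geq\rho'$ together force $\dPC{y}{y'}\geq 4$. Given the claim, define $\rho_2$ to be the minimum over $p\in\mani{E}$ of the radii it produces for $\rho'=\rho_3$, and $\rho_1$ the minimum of those produced with $\rho'=\rho_2$; since $\dhimpsing{E}{y}\leq\rho_j$ yields $\dhimp{y}{p}\leq\rho_j$ for the singularity~$p$ nearest to~$y$, both cases of~(\ref{rho1rho2sep}) follow. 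To prove the claim, suppose for contradiction $y_n\to p$ and $y'_n\in\leafu{y_n}$ with $\dhimpsing{E}{y'_n}\geq\rho'$ and $\dPC{y_n}{y'_n}\leq 4$. Passing to a subsequence via compactness of~$\mani{M}$, $y'_n\to y'_\infty$ with $\dhimpsing{E}{y'_\infty}\geq\rho'$, so $y'_\infty\in\manis{M}{E}$. Brody-hyperbolicity yields $\norm{\phi_{y'_n}'(0)}_{\metm{M}}=\eta(y'_n)\leq c_0$, making $\{\phi_{y'_n}\}$ an equicontinuous normal family; Arzel\`a-Ascoli then provides a subsequential locally uniform limit $\phi_\infty\colon\set{D}\to\mani{M}$ with $\phi_\infty(0)=y'_\infty$. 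The completeness of the Poincar\'e metric ensures continuity of~$\eta$ on~$\manis{M}{E}$ (Lins~Neto), so $\norm{\phi_\infty'(0)}_{\metm{M}}=\eta(y'_\infty)>0$; combined with the local product structure of~$\fol$ near the regular point~$y'_\infty$ and the extremality property~\eqref{eqdefeta} of~$\eta$, this forces $\phi_\infty$ to be a uniformization of $\leafu{y'_\infty}$. Writing $y_n=\phi_{y'_n}(\zeta_n)$ with $\zeta_n\in\adhDR{4}$, a further extraction yields $\zeta_n\to\zeta_\infty\in\adhDR{4}\subset\set{D}$, whence $y_n\to\phi_\infty(\zeta_\infty)\in\leafu{y'_\infty}\subset\manis{M}{E}$, contradicting $y_n\to p\in\mani{E}$.

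The main obstacle lies in identifying $\phi_\infty$ with a uniformization of the limit leaf: a priori the subsequential limit could degenerate or land in a neighboring leaf, and ruling this out is precisely where the completeness hypothesis enters, through Lins~Neto's continuity of~$\eta$ on all of~$\manis{M}{E}$.
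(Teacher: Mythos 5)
Your treatment of~(\ref{rho2petit}) and~(\ref{x>3}) is correct and is exactly the paper's: $\mu(\mani{E})=0$ lets you fix $\rho_3$, a continuous cutoff between $\rho_3$ and $2\rho_3$ together with weak convergence $m_{x,R}\to\mu$ gives the bound on $m_{x,R}(U_3^{\sing})$, and $\rho_3<\dhimpsing{E}{x}$ gives~(\ref{x>3}). The overall scheme --- choose $\rho_3$ first, then $\rho_2$, then $\rho_1$ --- also matches.

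The gap is in the normal-families argument for~(\ref{rho1rho2sep}), precisely at the step you flag as the main obstacle. You extract a nonconstant subsequential limit $\phi_\infty\colon\set{D}\to\mani{M}$ with $\phi_\infty(0)=y'_\infty$ and $\phi_\infty(\zeta_\infty)=p\in\mani{E}$, and you want to rule out the latter by asserting $\phi_\infty$ is a uniformization of $\leafu{y'_\infty}$. But the extremality property~\eqref{eqdefeta} (Schwarz lemma) characterizes the uniformization only among holomorphic maps $\set{D}\to\leafu{y'_\infty}$, i.e.\ maps whose image already avoids $\mani{E}$; and the local product structure only shows that $\phi_\infty$ maps the connected component of $\set{D}\setminus\phi_\infty^{-1}(\mani{E})$ through $0$ into $\leafu{y'_\infty}$. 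Nothing so far excludes $\phi_\infty^{-1}(\mani{E})\neq\emptyset$, and $\zeta_\infty$ lying in it is exactly the situation you are trying to contradict, so the argument is circular. Continuity of $\eta$ on $\manis{M}{E}$ does not close this: it tells you nothing about $\phi_\infty$ near a preimage of $\mani{E}$. What does close it is completeness used directly, and at that point the normal-families machinery becomes unnecessary. The paper's route: the metric $g:=\frac{4\metm{M}}{\eta^2}$ is complete on $\manis{M}{E}$, so by Hopf--Rinow the set $K'=\{y\setst d_g(y,K)\leq4\}$ around the compact $K=\{y\setst\dhimpsing{E}{y}\geq\rho_3\}$ is compact in $\manis{M}{E}$, hence at positive $\metm{M}$-distance from $\mani{E}$; take $\rho_2$ smaller than that distance. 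Since the leafwise Poincar\'e distance $\dPC{\cdot}{\cdot}$ (paths confined to $\leafu{y}$) dominates the ambient $d_g$-distance, $\dhimpsing{E}{y}\leq\rho_2$ and $\dhimpsing{E}{y'}\geq\rho_3$ force $\dPC{y}{y'}>4$, and one repeats with $\rho_2$ in place of $\rho_3$ to get $\rho_1$. If you want to keep your normal-families approach, you would still need an argument of this flavor (e.g.\ that $\phi_\infty(\adhDR{4})$ lies in the compact $\{d_g(\cdot,y'_\infty)\leq5\}$, hence misses $\mani{E}$) to justify $\phi_\infty(\zeta_\infty)\in\manis{M}{E}$.
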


\begin{proof} Choose first $\rho_3>0$ satisfying~\eqref{rho2petit} and~\eqref{x>3}. Clearly we can do so since $\mu(\mani{E})=0$ and $m_{x,R}\to\mu$. Then, since the Poincar\'e metric is complete, we can choose $\rho_2\in\intoo{0}{\rho_3}$ and after that $\rho_1\in\intoo{0}{\rho_2}$ satisfying~\eqref{rho1rho2sep}.
\end{proof}

\begin{lem}\label{lemdefeps0} With the notations of Lemma~\ref{lemsetupsing}, there exist $c>1$ and $\eps_0>0$ such that the following holds. Let $y\notin\frac{1}{2}U_1^{\sing}$, $y'\in\leafu{y}$ be such that $\dPC{y}{y'}\leq\eps_0$. Then,
  \[c^{-1}\dPC{y}{y'}\leq\dhimp{y}{y'}\leq c\dPC{y}{y'}.\]
  Above, $\frac{1}{2}U_1^{\sing}=\left\{y\in\manis{M}{E};~\dhimpsing{E}{y}\leq\frac{\rho_1}{2}\right\}$.
\end{lem}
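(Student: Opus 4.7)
The proof rests on the identity $\metPC=(4/\eta^2)\metm{M}$ on each leaf together with the fact that, under our hypotheses (isolated singularities, complete Poincar\'e metric, Brody-hyperbolicity), the function $\eta$ is continuous on $\manis{M}{E}$ by Lins Neto's theorem recalled just before the lemma. Since $\eta\leq c_0$ globally by Brody-hyperbolicity and $\eta$ is strictly positive and continuous on the compact set $K:=\{y\in\manis{M}{E}:\dhimpsing{E}{y}\geq\rho_1/4\}$, there exists $\eta_{\min}>0$ with $\eta\geq\eta_{\min}$ on $K$. Hence $\metPC$ and $\metm{M}$ are pointwise equivalent on $K$, with constants depending only on $c_0$ and $\eta_{\min}$.

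The upper estimate $\dhimp{y}{y'}\leq(c_0/2)\dPC{y}{y'}$ is immediate: parameterize the Poincar\'e geodesic from $y$ to $y'$ in $\leafu{y}$ by Poincar\'e arc length, so that its $\metm{M}$-speed equals $\eta/2\leq c_0/2$, and its ambient length is at most $(c_0/2)\dPC{y}{y'}$. This dictates a first constraint on $\eps_0$: requiring $\eps_0<\rho_1/(2c_0)$ forces, for any $y$ with $\dhimpsing{E}{y}>\rho_1/2$ and any $y'\in\leafu{y}$ with $\dPC{y}{y'}\leq\eps_0$, the whole Poincar\'e geodesic from $y$ to $y'$ to stay in $K$.

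The lower estimate $\dPC{y}{y'}\leq c\,\dhimp{y}{y'}$ is the substantive direction, because a priori the ambient distance can be much smaller than the leaf-intrinsic distance (another branch of the same leaf could come back close to $y$). To prevent this, I argue via a flow-box argument in the spirit of Lemma~\ref{flowboxUV}: using compactness of $K$ together with Brody-hyperbolicity, there exist a uniform radius $r_0\in\intoo{0}{1}$ and finitely many flow boxes covering $K$ such that, for every $y\in K$, one of those flow boxes contains $\phi_y(\rD{r_0})$ inside a single plaque. Shrinking $\eps_0$ further, so that the Poincar\'e $\eps_0$-ball around $0$ in $\set{D}$ is contained in $\rD{r_0/2}$, the condition $\dPC{y}{y'}\leq\eps_0$ forces $y'\in\phi_y(\rD{r_0/2})$, hence $y$ and $y'$ lie in a common plaque.

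Within that plaque the pullback $\phi_y^*\metm{M}=(\eta\circ\phi_y)^2/4\cdot\metPC$ is uniformly equivalent to $\metPC$ on $\rD{r_0}$, because $\eta\circ\phi_y\in[\eta_{\min},c_0]$ there; and $\metPC$ is uniformly equivalent to the Euclidean metric on $\rD{r_0}$. Finally, the flow-box chart being bi-Lipschitz on a compact sub-flow-box, the ambient distance $\dhimp{y}{y'}$ is bounded below by a uniform multiple of the Euclidean plaque distance, which is itself uniformly comparable to $\dPC{y}{y'}$. Concatenating these comparisons gives $\dPC{y}{y'}\leq c\,\dhimp{y}{y'}$. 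The main technical point is the last step, the comparison between the ambient $\metm{M}$-distance and the plaque-intrinsic distance, since the former could a priori be realized by a path leaving the plaque; it is here that isolatedness of the singularities and compactness of $K$ are essential to secure constants $c$ independent of $y$.
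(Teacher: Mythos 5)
Your proof is correct and follows essentially the same route as the paper: bound $\eta$ above and below on the compact set away from the singularities, then use a finite flow-box cover to reduce the ambient/Poincar\'e comparison to a plaquewise comparison where the metrics are uniformly equivalent. You invoke Lins Neto's continuity of $\eta$ to get the lower bound $\eta\geq\eta_{\min}$, which is slightly more than needed (lower semi-continuity, which always holds, already suffices on a compact set, and the paper instead reads the bound directly off the finite flow-box cover), and the closing remark attributing uniformity of constants partly to ``isolatedness of the singularities'' is a small misattribution --- compactness of $K$ is what is used, for which closedness of $\mani{E}$ and compactness of $\mani{M}$ suffice.
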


\begin{proof} Given $\rho_1$, we can cover $\wo{\mani{M}}{\frac{1}{4}U_1^{\sing}}$ by a finite number of flow boxes. It follows that~$\eta$ is bounded by $c^{-1}<\eta<c$ on $\wo{\mani{M}}{\frac{1}{2}U_1^{\sing}}$. Moreover, we find an $\eps_0>0$ such that $y\notin\frac{1}{4}U^{\sing}_1$ implies that $\set{B}(y,c\eps_0)$ is contained in a flow box. For~$y$ and~$y'$ satisfying the above conditions, we get
  \[\frac{c\eps_0}{2}\geq\frac{c}{2}\dPC{y}{y'}\geq\dhermf{y}{y}{y'},\]
  where $\dhermfnov{y}$ denotes the distance on $\leafu{y}$ induced by the restriction of~$\metm{M}$ to~$\leafu{y}$. This implies that~$y$ and~$y'$ are in the same plaque and the bounds on~$\eta$ give us both needed inequalities.
\end{proof}

The following describes circles that are in a large part mapped to the singular set. These $R_{i,j}$ will prove to give prescribed steps of the sets involved in Lemmas~\ref{estimLebtheta} and~\ref{estimLebzeta}.

\begin{lem}\label{lemRj12}With the notations of Lemma~\ref{lemsetupsing}, take $R>0$ sufficiently large. Denote by
  \[I_j^{\sing}=\left\{R'\in\intcc{0}{R};~\Leb\left\{\xi\in\partial\DR{R'};\phi_x(\xi)\in U_j^{\sing}\right\}>\frac{\pi}{3}\right\},\quad j\in\{1,2,3\}.\]
  Here, $\Leb$ of the circles $\partial\DR{R'}$ are normalized to have mass $2\pi$. There exists  $\left(R_{i,j}\right)_{i\in\intent{0}{N},j\in\{1,2\}}$, such that $4<R_{i,2}<R_{i,1}<R_{i+1,2}$, $i\in\intent{0}{N-1}$, $R\geq R_{N,2}$ and $R_{N,1}=+\infty$, with
    \begin{enumerate}[label=(\arabic*),ref=\arabic*]
    \item \label{Rj12} $R_{i,1}-R_{i,2}\geq4$, $R_{i+1,2}-R_{i,1}\geq4$, for $i\in\intent{0}{N-1}$.
    \item \label{Rjsing1}$I_1^{\sing}\subset\cup_{i=0}^N\intoo{R_{i,2}}{R_{i,1}}$. 
      \item\label{Rjsing3}$\intcc{0}{R}\cap\left(\cup_{i=0}^N\intoo{R_{i,2}}{R_{i,1}}\right)\subset I_3^{\sing}$.
    \end{enumerate}
    Moreover, $\sum_{i=0}^{N-1}R_{i,1}-R_{i,2}+(R-R_{N,2})\leq12\delta R$ and $N\leq3\delta R$.
  \end{lem}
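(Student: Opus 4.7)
The plan is to combine three ingredients: a Lebesgue bound on $|I_3^{\sing}|$, a geometric inclusion relating the sets $I_1^{\sing}\subset I_2^{\sing}\subset I_3^{\sing}$ via Lemma~\ref{lemsetupsing}\eqref{rho1rho2sep}, and a combinatorial merging of neighborhoods of $I_1^{\sing}$.

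First I would bound $|I_3^{\sing}|$ using the Birkhoff-type convergence $m_{x,R}\to\mu$: since $U_3^{\sing}$ is closed in $\mani{M}$, Lemma~\ref{lemsetupsing}\eqref{rho2petit} gives $m_{x,R}(U_3^{\sing})<\delta$ for all sufficiently large~$R$. In the hyperbolic polar coordinates $\zeta=\tanh(R'/2)e^{i\theta}$, the Poincar\'e area form on~$\set{D}$ is (up to a harmless constant) $\sinh(R')\,dR'\,d\theta$ and $\log^+(r/|\zeta|)=\log(\tanh(R/2)/\tanh(R'/2))$. A direct computation shows that the weight $\log^+(r/|\zeta|)\sinh(R')$ is bounded below by a positive constant (approaching $1$) for $R'\in[1,R-1]$. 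Combined with $M_R\sim 2\pi R$ and the defining threshold $f_3(R'):=\Leb\{\theta:\phi_x(\tanh(R'/2)e^{i\theta})\in U_3^{\sing}\}>\pi/3$ on $I_3^{\sing}$, this gives $|I_3^{\sing}|\leq 12\delta R$ once $R$ is large enough.

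Next I would establish the geometric inclusion: the open $4$-neighborhood of $I_1^{\sing}$ in $[0,R]$ is contained in $I_2^{\sing}$, and iterating, the $8$-neighborhood is contained in $I_3^{\sing}$. Indeed, if $R'\in I_1^{\sing}$ and $|R''-R'|<4$, then for each $\theta$ in the good set (of measure $>\pi/3$) the radial segment in $\set{D}$ from $\tanh(R'/2)e^{i\theta}$ to $\tanh(R''/2)e^{i\theta}$ has Poincar\'e length $|R''-R'|<4$, so its $\phi_x$-image has leaf Poincar\'e length $<4$, and Lemma~\ref{lemsetupsing}\eqref{rho1rho2sep} forces $\phi_x(\tanh(R''/2)e^{i\theta})\in U_2^{\sing}$. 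I then construct the intervals by taking the open $4$-neighborhood $\tilde I:=(I_1^{\sing}+(-4,4))\cap[0,R]$, decomposing $\tilde I$ into its maximal open components, and merging consecutive components whose gap is strictly less than $4$. Any bridge point between merged components lies at distance in $[4,8)$ from $I_1^{\sing}$, hence in $I_3^{\sing}$; so each merged interval remains inside $I_3^{\sing}$, the merged intervals are separated by gaps $\geq 4$ by construction, each has length $\geq 8$, and their union covers $I_1^{\sing}$, giving \eqref{Rj12}--\eqref{Rjsing3}. For the left endpoint $R_{0,2}>4$, Lemma~\ref{lemsetupsing}\eqref{x>3} and continuity of $\phi_x$ at $0$ ensure that $\phi_x(\adhDR{9})$ lies at positive distance from $\mani{E}$, so after possibly shrinking $\rho_1$ (consistent with Lemma~\ref{lemsetupsing}) one has $I_1^{\sing}\cap[0,9]=\emptyset$ and hence $R_{0,2}\geq 5$. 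For the right endpoint I set $R_{N,1}=+\infty$: either on the last merged interval when its right endpoint $b_M$ satisfies $R-b_M<4$ (the tail $(b_M,R]$ then lies in the $8$-neighborhood of the last $I_1^{\sing}$-point and hence in $I_3^{\sing}$), or as an additional trivial trailing interval with $R_{N,2}=R$ otherwise.

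Finally, since the union of all chosen intervals intersected with $[0,R]$ lies in $I_3^{\sing}$, the total length is bounded by $|I_3^{\sing}|\leq 12\delta R$, and since $R_{i,1}-R_{i,2}\geq 4$ for $i\leq N-1$ this also gives $N\leq 3\delta R$. The main technical difficulty will be making the numerical constants $4$, $8$, $12$ mutually compatible: the $4$ is forced by Lemma~\ref{lemsetupsing}\eqref{rho1rho2sep}, its iterate $8$ underlies the neighborhood inclusion, and the $12$ arises from combining $M_R\sim 2\pi R$, the $\pi/3$ threshold on $f_3$, and the asymptotic lower bound on the radial weight. A secondary subtle point is the right-endpoint convention $R_{N,1}=+\infty$, where one must verify that the absorbed tail $(b_M,R]$ still lies in $I_3^{\sing}$---and this is exactly what the $8$-neighborhood inclusion provides precisely when $R-b_M<4$.
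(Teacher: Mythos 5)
Your construction is correct, and it takes a genuinely different combinatorial route from the paper's. The paper decomposes $I_3^{\sing}$ into its connected components $C_3$ and, for those meeting $I_1^{\sing}$, takes $I_{C_3}$ to be the convex hull of $I_2^{\sing}\cap C_3$. The $\geq 4$ lengths and gaps are then forced by Lemma~\ref{lemsetupsing}\eqref{rho1rho2sep}: to exit $I_{C_3}$ you must pass from $U_1^{\sing}$ out of $U_2^{\sing}$, and to enter a new $I_{C_3'}$ you must pass from $U_2^{\sing}$ out of $U_3^{\sing}$, each costing hyperbolic distance $\geq 4$. You instead look only at the $4$-neighborhood of $I_1^{\sing}$ and merge components whose gaps are $<4$; the gaps $\geq 4$ then hold \emph{by construction}, while the containment in $I_3^{\sing}$ is obtained by iterating the inclusion ``$4$-neighborhood of $I_j^{\sing}\subset I_{j+1}^{\sing}$'' (using the same subset of angles at both steps). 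This is cleaner in one respect -- you never need to know that $I_3^{\sing}$ decomposes into nice open intervals -- at the cost of the merging step and the shrinking of $\rho_1$ for the left endpoint, whereas the paper gets $R_{0,2}>4$ more directly from condition~\eqref{x>3} and Lemma~\ref{lemsetupsing}\eqref{rho1rho2sep}.

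For the measure bound, the paper does not use a pointwise lower bound on the radial density; it integrates exactly over the constructed intervals via Lemma~\ref{estimintegral}, $\int_{r_2}^{r_1}\log(r/\rho)\frac{4\rho\,d\rho}{(1-\rho^2)^2}=R_1-R_2+o(1)$, and then C\`esaro-sums the errors. You instead bound $|I_3^{\sing}|$ directly with a pointwise lower bound on $\log(r/\rho)\sinh(R')$. This works, but note the weight is \emph{not} ``approaching $1$'' uniformly on $[1,R-1]$: writing $R'=R-c$, it equals $1-e^{-c}+o(1)$, so its infimum on $[1,R-1]$ tends to $1-e^{-1}\approx 0.63$. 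You therefore lose a factor $\approx(1-e^{-1})^{-1}$ relative to the paper's tight $\approx 6\delta R$, landing near $9.5\delta R$, which is still $\leq 12\delta R$ for $R$ large, so the final inequality survives -- but the slack is thinner than your parenthetical suggests and worth keeping track of. The bound $N\leq 3\delta R$ then follows from $R_{i,1}-R_{i,2}\geq 4$ exactly as you say.
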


  Big $R$ will correspond to small $\eps$, that is why the order is reversed compared to sets with prescribed steps.

  \begin{proof} \renewcommand{\qedsymbol}{}This is where we use our weird conditions on $\rho_1,\rho_2,\rho_3$. The $I_j^{\sing}$ are open subsets of $\intcc{0}{R}$. Hence, they are an at most countable union of intervals. For each $C_3$ connected component of $I_3^{\sing}$, if $C_3\cap I_1^{\sing}\neq\emptyset$, consider $I_{C_3}$ to be the convex hull of the union of all connected components~$C_2$ of $I_2^{\sing}\cap C_3$. Define $I$ to be the union of the $I_{C_3}$, for all the $C_3$. Since each $I_{C_3}$ contains a point of $I_1^{\sing}$ and all points of $I_2^{\sing}$ in $C_3$, condition~\eqref{rho1rho2sep} implies that the length of $I_{C_3}$ is at least~$4$. Indeed, points in the part of the circle of hyperbolic radius~$R_1\in I_1^{\sing}$ which are in $U_1^{\sing}$ must go out of $U_2^{\sing}$ so that we leave $I_{C_3}$. Moreover, since we work component by component in $I_3^{\sing}$, the same condition~\eqref{rho1rho2sep} implies that the $I_{C_3}$ are pairwise distant of at least~$4$. Indeed, points in the part of the circle of hyperbolic radius~$R_2\in I_{C_3}$ which are in $U_2^{\sing}$ must go out of $U_3^{\sing}$ so that we enter a new $I_{C_3'}$. If we denote the $I_{C_3}$ by $\intoo{R_{i,2}}{R_{i,1}}$, points~\eqref{Rj12},~\eqref{Rjsing1} and~\eqref{Rjsing3} are then clear. To have $R_{0,2}>4$, we use~\eqref{x>3}. The bounds on $R_{N,j}$ are just technical for further notations. Here actually, it is only asked that $R_{N,2}-R_{N-1,1}\geq4$. If this is not satisfied for $R_{N,2}=R$, then we can just put $R_{N-1,1}=+\infty$ and still get all points of the lemma.
  \end{proof}
  
For the estimate $\sum_{i=0}^{N-1}R_{i,1}-R_{i,2}+R-R_{N,2}\leq12\delta R$, we need the following computation.

\begin{lem}\label{estimintegral} Let $0<r_2<r_1<r$ and $R_j=\ln\frac{1+r_j}{1-r_j}$ (or $r_j=\frac{e^{R_j}-1}{e^{R_j}+1}$), $j\in\{1,2\}$ be the corresponding hyperbolic radii. Then
  \[\int_{r_2}^{r_1}\log\left(\frac{r}{\rho}\right)\frac{4\rho}{\left(1-\rho^2\right)^2}d\rho=R_1-R_2+o(1),\]
  where the $o(1)$ stands for $R_2$ goes to infinity.
\end{lem}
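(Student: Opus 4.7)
The plan is to substitute the hyperbolic radial variable $R' = \ln\frac{1+\rho}{1-\rho}$ (so $\rho = \tanh(R'/2)$). The key identity $\frac{4\rho\,d\rho}{(1-\rho^2)^2} = \sinh R'\,dR'$ (which is just the Poincar\'e area element in hyperbolic polar coordinates) turns the integral into
\[ I = \int_{R_2}^{R_1}\bigl(\log r - \log \tanh(R'/2)\bigr)\sinh R'\,dR'. \]

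First I would peel off the constant $\log r$ piece. Using $\int \sinh R'\,dR' = \cosh R'$, it contributes $\log r \cdot (\cosh R_1 - \cosh R_2)$. The identity $\log\tanh(R/2) = -2 e^{-R} + O(e^{-3R})$, together with $\cosh R_j = \frac{1}{2}(e^{R_j}+e^{-R_j})$, shows that this piece is of order $e^{R_2-R} - e^{R_1-R}$: it is bounded by $1$ in absolute value, and it goes to $0$ as soon as $R - R_1 \to +\infty$.

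For the remaining piece, the key observation is that $\frac{d}{dR'}\log\tanh(R'/2) = \frac{1}{\sinh R'}$, which makes integration by parts with $u = \log\tanh(R'/2)$, $dv = \sinh R'\,dR'$ extremely clean:
\[ -\int_{R_2}^{R_1} \log\tanh(R'/2)\sinh R'\,dR' = -\bigl[\cosh R'\log\tanh(R'/2)\bigr]_{R_2}^{R_1} + \bigl[\log\sinh R'\bigr]_{R_2}^{R_1}. \]
Expanding $\tanh(R'/2) = \frac{1-e^{-R'}}{1+e^{-R'}}$ gives $\log\tanh(R'/2) = -2 e^{-R'} + O(e^{-3R'})$, hence $\cosh R'\log\tanh(R'/2) = -1 + O(e^{-2R'})$, so the first bracket collapses to $o(1)$. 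The second bracket is $\log\sinh R_1 - \log\sinh R_2 = (R_1 - \log 2) - (R_2 - \log 2) + O(e^{-2R_2}) = R_1 - R_2 + o(1)$.

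Assembling, $I = R_1 - R_2 + o(1)$ as $R_2 \to +\infty$. The only delicate point is the boundary term $\log r \cdot (\cosh R_1 - \cosh R_2)$, which strictly speaking is only $o(1)$ under the implicit hypothesis $R - R_1 \to +\infty$; in the application to Lemma~\ref{lemRj12} one sums only $N = O(\delta R)$ such contributions, each bounded by $1$, so they can safely be absorbed into the $O(\delta R)$ error budget when deriving the final bound $\sum_i (R_{i,1}-R_{i,2}) + (R - R_{N,2}) \leq 12\delta R$.
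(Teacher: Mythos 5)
Your computation is correct and follows essentially the same integration-by-parts route as the paper. You work in the hyperbolic variable $R'$, which makes the primitives ($\int\sinh R'\,dR'=\cosh R'$ and $\tfrac{d}{dR'}\log\tanh(R'/2)=1/\sinh R'$) visibly clean, whereas the paper stays in the Euclidean variable $\rho$ and finishes with partial fractions, but the two decompositions are equivalent and both land on a main term $R_1-R_2$ plus a bracket. Your treatment of that bracket is in fact \emph{more} careful than the paper's: after the dust settles, the boundary contribution is $\log r\cdot(\cosh R_1-\cosh R_2)\approx e^{R_2-R}-e^{R_1-R}$, and the paper simply asserts this is $o(1)$, while, as you observe, it is only bounded by $1$ in absolute value and tends to $0$ only under the extra hypothesis $R-R_1\to+\infty$ (for instance $R_1=R-1$, $R_2=R-2$ leaves it bounded away from zero even as $R_2\to+\infty$). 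Your remark that in the application to Lemma~\ref{lemRj12} the resulting $O(1)$ error per interval is harmless, because there are only $N=O(\delta R)$ intervals and so the total error fits inside the $O(\delta R)$ budget, is precisely the right way to close this gap; it is worth recording, since the paper's phrasing of the $o(1)$ as $R_2\to\infty$ does not by itself justify the estimate.
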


\begin{proof}[End of proof of Lemma~\ref{lemRj12}] Taking for granted the above estimate, let us finish our proof. If~$R$ is sufficiently large, we have $m_{x,R}\left(U_3^{\sing}\right)<\delta$ by~\eqref{rho2petit}. We get
  \[M_R\delta>\int_{\set{D}}\chi_{\left\{\phi_x\in U_3^{\sing}\right\}}(\xi)\log^+\left(\frac{r}{\Cmod{\xi}}\right)\metPC(\xi),\]
  where as usual, $\chi_B$ is the characteristic function of~$B$. Let us continue our computation.
  \[\begin{aligned}M_R\delta&>\int_{0}^r\left(\int_0^{2\pi}\chi_{\left\{\phi_x\in U_3^{\sing}\right\}}(\rho e^{i\theta})d\theta\right)\log\left(\frac{r}{\rho}\right)\frac{4\rho d\rho}{\left(1-\rho^2\right)^2}\\
      M_R\delta&>\frac{\pi}{3}\sum_{i=0}^{N-1}\int_{r_{i,2}}^{r_{i,1}}\log\left(\frac{r}{\rho}\right)\frac{4\rho d\rho}{\left(1-\rho^2\right)^2}+\frac{\pi}{3}\int_{r_{N,2}}^r\log\left(\frac{r}{\rho}\right)\frac{4\rho d\rho}{\left(1-\rho^2\right)^2}\\
      M_R\delta&>\frac{\pi}{3}\sum_{i=0}^{N-1}\left(R_{i,1}-R_{i,2}\right)+\frac{\pi}{3}\left(R-R_{N,2}\right)+o(R).\end{aligned}\]
Here, we have denoted $r_{i,j}=\frac{e^{R_{i,j}}-1}{e^{R_{i,j}}+1}$ as usual and we have used~\eqref{Rjsing3}. For the last inequality, we applied Lemma~\ref{estimintegral} and Ces\`aro Theorem. Note that~\eqref{Rj12} implies $R_{i,2}>8i+4$, which goes to infinity with~$i\leq N=O(R)$. Now, since $M_R\sim2\pi R$, we get our result.\end{proof}

\begin{proof}[Proof of Lemma~\ref{estimintegral}] This is only a silly computation. Integrating by parts,
  \[\int_{r_2}^{r_1}\log\left(\frac{r}{\rho}\right)\frac{4\rho}{\left(1-\rho^2\right)^2}d\rho=2\left[\frac{1}{1-\rho^2}\log\left(\frac{r}{\rho}\right)\right]_{r_2}^{r_1}+2\int_{r_2}^{r_1}\frac{d\rho}{\rho(1-\rho^2)}.\]
  Since $\frac{1}{\rho(1-\rho^2)}=\frac{1}{2}\left(\frac{1}{1-\rho}-\frac{1}{1+\rho}+\frac{2}{\rho}\right)$, we obtain
  \[\int_{r_2}^{r_1}\log\left(\frac{r}{\rho}\right)\frac{4\rho}{\left(1-\rho^2\right)^2}d\rho=2\left[\frac{\rho^2}{1-\rho^2}\log\left(\frac{r}{\rho}\right)\right]_{r_2}^{r_1}-\log(1-r_1^2)+\log(1-r_2^2).\]
  Now, we translate into hyperbolic language. For $j\in\{1,2\}$, $\frac{r_j^2}{1-r_j^2}=\frac{1}{4}e^{R_j}+o\left(e^{R_j}\right)$. Moreover, $\log\frac{r}{r_j}=2e^{-R_j}-2e^{-R}+o\left(e^{-R_j}\right)$. So the first bracket above is $o(1)$. The other terms satisfy $1-r_j^2=e^{-R_j}(4+o(1))$. We get the estimate wanted.
\end{proof}
    
\subsection{End of proof of Lemmas~\ref{estimLebtheta} and~\ref{estimLebzeta}} Our preparation is soon to be over and we move on to the end of our proof. We begin by Lemma~\ref{estimLebtheta}, for it involves less computation and already explains the ideas of the proof of Lemma~\ref{estimLebzeta}. First, let us study the distance to identity of a rotation in $\Aut(\set{D})$.

\begin{lem}\label{distrotid} Let $0<\eps_1<\eps_2$ be sufficiently small with $\eps_1\leq\frac{1}{16}\eps_2$, $\theta\in\intcc{-\pi}{\pi}$ and $\xi\in\set{D}$. Then, if $\Cmod{\xi}\geq\frac{1}{4}$,
  \[\dPCp{\xi}{e^{i\theta}\xi}\in\intcc{\eps_1}{\eps_2},\quad\text{for}\quad 8\eps_2^{-1}\Cmod{\sin(\theta/2)}\leq1-\Cmod{\xi}^2\leq\frac{1}{2}\eps_1^{-1}\Cmod{\sin(\theta/2)}.\]
\end{lem}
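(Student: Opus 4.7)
The plan is to reduce everything to a direct computation of the Poincar\'e distance between~$\xi$ and $e^{i\theta}\xi$, then to perform a careful size estimate on the resulting expression.

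First I would recall that the Poincar\'e distance on the disk satisfies
\[\dPCp{z}{w}=2\,\text{arctanh}\left|\frac{z-w}{1-\cjg{z}w}\right|.\]
Applied to $z=\xi$ and $w=e^{i\theta}\xi$, factoring~$\xi$ in the numerator, the Möbius ratio becomes
\[m:=\left|\frac{\xi-e^{i\theta}\xi}{1-\Cmod{\xi}^2e^{i\theta}}\right|=\frac{\Cmod{\xi}\cdot\Cmod{1-e^{i\theta}}}{\Cmod{1-\Cmod{\xi}^2e^{i\theta}}}.\]
Using $\Cmod{1-e^{i\theta}}=2\Cmod{\sin(\theta/2)}$ and $\Cmod{1-\rho^2e^{i\theta}}^2=(1-\rho^2)^2+4\rho^2\sin^2(\theta/2)$, with the abbreviations $\rho=\Cmod{\xi}$, $s=\Cmod{\sin(\theta/2)}$ and $u=1-\rho^2$, this simplifies to
\[m=\frac{2\rho s}{\sqrt{u^2+4\rho^2 s^2}}.\]
This is the only non-trivial computation and it is straightforward.

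Next I would exploit the range assumption $8\eps_2^{-1}s\leq u\leq\tfrac{1}{2}\eps_1^{-1}s$. Since $\eps_2$ is taken sufficiently small (say $\eps_2\leq 4$), and $\rho\leq1$, the left-hand inequality forces $u\geq 2\rho s$. Hence the square root is controlled on both sides by
\[u\leq\sqrt{u^2+4\rho^2s^2}\leq\sqrt{2}\,u,\]
which yields the clean sandwich
\[\frac{\sqrt{2}\,\rho s}{u}\leq m\leq\frac{2\rho s}{u}.\]
Now I would plug in the two hypotheses on~$u$. The upper bound on~$u$ combined with $\rho\geq\tfrac14$ gives $m\geq\tfrac{\sqrt2}{4}\cdot 2\eps_1=\tfrac{\sqrt2}{2}\eps_1$, hence $\dPCp{\xi}{e^{i\theta}\xi}=2\,\text{arctanh}(m)\geq2m\geq\sqrt2\,\eps_1\geq\eps_1$. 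The lower bound on~$u$ (together with $\rho\leq1$) gives $m\leq\eps_2/4$, so that (for $\eps_2$ small enough) $\text{arctanh}(m)\leq2m\leq\eps_2/2$, thus $\dPCp{\xi}{e^{i\theta}\xi}\leq\eps_2$.

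I do not expect any serious obstacle: the argument is essentially a one-page computation. The only points requiring mild vigilance are the consistency of the interval for~$u$ (nonempty precisely when $\eps_1\leq\tfrac{1}{16}\eps_2$, matching the stated hypothesis) and keeping the crude elementary inequalities $\text{arctanh}(m)\leq2m$ for $m\leq\tfrac12$ and $\text{arctanh}(m)\geq m$ available, so that the passage from the Möbius ratio~$m$ to the hyperbolic distance introduces only harmless constants that are absorbed into the factors $\sqrt2$ and $1/2$ obtained above.
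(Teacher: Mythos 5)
Your proof is correct and follows essentially the same approach as the paper: both start from the explicit formula for the Poincar\'e distance in terms of the M\"obius ratio, isolate the quantity $\frac{1-\Cmod{\xi}^2}{2\Cmod{\xi}\Cmod{\sin(\theta/2)}}$, and translate the hypothesis on $1-\Cmod{\xi}^2$ into a sandwich for the distance. The only difference is stylistic: the paper packages the computation into the auxiliary function $f(x)=2\tanh^{-1}(1/\sqrt{1+x^2})$ and invokes the monotonicity and asymptotics of $f^{-1}$, whereas you bound the M\"obius ratio directly with elementary inequalities such as $u\leq\sqrt{u^2+4\rho^2s^2}\leq\sqrt{2}\,u$ and $m\leq\mathrm{arctanh}(m)\leq 2m$. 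Both are sound; yours is arguably more self-contained since it avoids the $O(1)$ estimate for $f^{-1}$.
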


\begin{proof} Denote by $\tanh^{-1}(x)=\frac{1}{2}\ln\frac{1+x}{1-x}$, $x\in\intoo{-1}{1}$ and compute.
  \[\begin{aligned}\dPCp{\xi}{e^{i\theta}\xi}&=2\tanh^{-1}\Cmod{\frac{\xi(1-e^{i\theta})}{1-e^{i\theta}\Cmod{\xi}^2}}=2\tanh^{-1}\Cmod{\frac{2\xi\sin(\theta/2)}{\cos(\theta/2)\left(1-\Cmod{\xi}^2\right)-i\sin(\theta/2)\left(1+\Cmod{\xi}^2\right)}}\\
      &=2\tanh^{-1}\left(\frac{2\Cmod{\xi}\Cmod{\sin(\theta/2)}}{\left(\left(1-\Cmod{\xi}^2\right)^2+4\Cmod{\xi}^2\sin^2(\theta/2)\right)^{1/2}}\right)=f\left(\frac{1-\Cmod{\xi}^2}{2\Cmod{\xi}\Cmod{\sin(\theta/2)}}\right),\end{aligned}\]
%      &=2\tanh^{-1}\left(\left(1+\left(\frac{1-\Cmod{\xi}^2}{2\Cmod{\xi}\Cmod{\sin(\theta/2)}}\right)^2\right)^{-1/2}\right)=f\left(\frac{1-\Cmod{\xi}^2}{2\Cmod{\xi}\Cmod{\sin(\theta/2)}}\right),\end{aligned}\]
  for $f\colon\set{R}_+^*\to\set{R}_+^*$ defined by $f(x)=2\tanh^{-1}\left(\frac{1}{\sqrt{1+x^2}}\right)$. The function~$f$ is strictly decreasing and has inverse $f^{-1}\colon y\mapsto\sqrt{\frac{1}{\tanh^{2}(y/2)}-1}=\frac{2}{y}+O_{y\to0}(1)$, as is shown by a straightforward computation. It follows, that if $\eps_1,\eps_2$ are sufficiently small, and if $x\in\intcc{4\eps_2^{-1}}{\eps_1^{-1}}$, then $x\in\intcc{f^{-1}(\eps_2)}{f^{-1}(\eps_1)}$ and $f(x)\in\intcc{\eps_1}{\eps_2}$. The bounds on $1-\Cmod{\xi}^2$ exactly give
  \[\frac{1-\Cmod{\xi}^2}{2\Cmod{\xi}\Cmod{\sin(\theta/2)}}\in\intcc{4\eps_2^{-1}}{\eps_1^{-1}},\]
  for $\Cmod{\xi}\leq\frac{1}{4}$, so we get our result.
\end{proof}

\begin{lem}\label{ensthetapasprescrits} Take the notations of Lemmas~\ref{lemdefeps0} and~\ref{lemRj12} and let $\eps>0$ be sufficiently small, $R>0$ be sufficiently large. Define $\eps_{i,1}=2\arcsin\left(8c\eps e^{-R_{i,1}}\right)$ and $\eps_{i,2}=2\arcsin\left(16c\eps e^{-R_{i,2}}\right)$, for $i\in\intent{0}{N}$. Note that $\eps_{N,1}=0$. Then, for $y\in B_R(x,\eps_0)$,
  \[A=\left\{\theta\in\intcc{-\pi}{\pi};~\dhimps{\adhDR{R}}{\phi_y\circ r_{\theta}}{\phi_y}<\eps\right\}\]
  has prescribed $(\eps_{i,j})_{i,j}$-steps.
\end{lem}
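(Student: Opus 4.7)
The plan is to argue by contradiction. Take $\theta_1, \theta_2 \in A$ and set $\theta = \theta_2 - \theta_1$. Assume that $\Cmod{\theta}$ lies in a ``forbidden gap'' of the partition $\bigcup_{i=0}^N \intcc{\eps_{i,1}}{\eps_{i,2}}$, i.e.\ either $\Cmod{\theta} > \eps_{0,2}$, or $\Cmod{\theta} \in \intoo{\eps_{i,2}}{\eps_{i-1,1}}$ for some $i\in\intent{1}{N}$. The triangle inequality applied to $\theta_1,\theta_2\in A$ (together with the change of variable $\eta=e^{i\theta_1}\xi$ on $\adhDR{R}$) will yield
\[\sup_{\xi \in \adhDR{R}} \dhimp{\phi_y(\xi)}{\phi_y(e^{i\theta}\xi)} < 2\eps,\]
so it will suffice to exhibit one $\wt{\xi} \in \adhDR{R}$ where $\dhimp{\phi_y(\wt{\xi})}{\phi_y(e^{i\theta}\wt{\xi})} \geq 2\eps$.

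The core of the argument is to select a hyperbolic radius $R' \in \intoo{0}{R}$ enjoying two competing properties. First, $R'$ must sit strictly between two consecutive singular intervals of Lemma~\ref{lemRj12}, so that $R_{i-1,1} < R' < R_{i,2}$ in the generic gap case (the outer case $\Cmod{\theta}>\eps_{0,2}$ being analogous with $R' < R_{0,2}$). By Lemma~\ref{lemRj12}(\ref{Rjsing1}) this guarantees $R' \notin I_1^{\sing}$, so $\Leb\{\xi \in \partial\adhDR{R'};\,\phi_x(\xi) \in U_1^{\sing}\} \leq \pi/3$ and a large portion of the circle avoids $\phi_x^{-1}(U_1^{\sing})$. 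Second, writing $s:=\Cmod{\sin(\theta/2)}$, I want $\dPC{\xi}{e^{i\theta}\xi} \in \intcc{2c\eps}{\eps_0}$ for every $\xi \in \partial\adhDR{R'}$. Applying Lemma~\ref{distrotid} with $\eps_1=2c\eps$ and $\eps_2=\eps_0$ (valid once $\eps_0 \geq 32c\eps$), this translates into a window $R' \in \intcc{\log(16c\eps/s)}{\log(\eps_0/(2s))}$. The forbidden bounds $s > 16c\eps e^{-R_{i,2}}$ and $s < 8c\eps e^{-R_{i-1,1}}$, combined with $16c\eps < \eps_0$ for $\eps$ small, are precisely what make this window meet $\intoo{R_{i-1,1}}{R_{i,2}}$.

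Having fixed such an $R'$, I will choose any $\xi_0 \in \partial\adhDR{R'}$ with $\phi_x(\xi_0) \notin U_1^{\sing}$ and set $\wt{\xi} = e^{i\theta_0}\xi_0$, where $\theta_0$ realizes the Bowen estimate $\sup_{\xi \in \adhDR{R}} \dhimp{\phi_x(\xi)}{\phi_y(e^{i\theta_0}\xi)} < \eps_0$ coming from $y \in B_R(x,\eps_0)$. Taking $\eps_0 < \rho_1/2$, the inequality $\dhimp{\phi_x(\xi_0)}{\phi_y(\wt{\xi})} < \eps_0$ forces $\phi_y(\wt{\xi}) \notin \frac{1}{2}U_1^{\sing}$. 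Since $\Cmod{\wt{\xi}} = \Cmod{\xi_0}$ and $\phi_y$ is a Poincar\'e isometry, one gets $\dPC{\wt{\xi}}{e^{i\theta}\wt{\xi}} = \dPC{\xi_0}{e^{i\theta}\xi_0} \in \intcc{2c\eps}{\eps_0}$, and Lemma~\ref{lemdefeps0} applied at $\phi_y(\wt{\xi})$ then upgrades this to $\dhimp{\phi_y(\wt{\xi})}{\phi_y(e^{i\theta}\wt{\xi})} \geq c^{-1}\cdot 2c\eps = 2\eps$, contradicting the supremum estimate.

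The main obstacle, and the reason behind the asymmetric constants $8c$ and $16c$ in the definitions of $\eps_{i,1}$ and $\eps_{i,2}$, will be this delicate matching of constants: the ratio-$16$ constraint of Lemma~\ref{distrotid} and the singular gap $R_{i,2}-R_{i-1,1}\geq4$ of Lemma~\ref{lemRj12}(\ref{Rj12}) must cooperate to make the window for $R'$ non-empty with some slack, while the tuning of $8c$ vs $16c$ is what absorbs the approximation error hidden in $\dPC{\xi}{e^{i\theta}\xi}\approx s e^{R'}$ so that the construction succeeds \emph{uniformly} on the entire circle $\partial\adhDR{R'}$.
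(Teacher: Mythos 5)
Your proof follows essentially the same strategy as the paper's, and it is correct at the level of the main ideas: reduce $\theta_1,\theta_2\in A$ to a single rotation $r_\theta$, choose a circle $\partial\DR{R'}$ on which Lemma~\ref{distrotid} places $\dPC{\xi}{e^{i\theta}\xi}$ in $\intcc{2c\eps}{\eps_0}$, and exploit the $\phi_x$-controlled non-singular behaviour along the circle (via Lemma~\ref{lemRj12} and Lemma~\ref{lemdefeps0}) to contradict the Bowen bound. The organizational difference is that the paper goes forward — it picks $\xi$ so that $1-\Cmod{\xi}^2=\tfrac12\eps_1^{-1}\Cmod{\sin(\theta/2)}$, shows the \emph{whole} circle must land in $\tfrac12U_1^{\sing}$ (or else $\dPC{0}{\xi}>R$), and invokes the definition of $I_1^{\sing}$ to locate $\dPC{0}{\xi}$ in $\cup_i\intoo{R_{i,2}}{R_{i,1}}$, from which the bounds on $\Cmod{\theta}$ fall out — whereas you argue by contradiction, choose $R'$ in a non-singular gap, and pick a single $\xi_0$ outside $\phi_x^{-1}(U_1^{\sing})$ (using $R'\notin I_1^{\sing}$). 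Logically these are equivalent, and your handling of the ambient rotation $\theta_0$ (which the paper absorbs into a choice of $\phi_y$ with $\dhimps{\adhDR{R}}{\phi_x}{\phi_y}<\eps_0$) is a clean alternative.

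Two small things you pass over, which the paper also treats briefly: (1) you should justify that the forbidden outer case $\Cmod{\theta}>\eps_{0,2}$ cannot actually occur for $\theta_1,\theta_2\in A$ in a way compatible with the constraint $\Cmod{\xi}\geq\tfrac14$ in Lemma~\ref{distrotid} — the paper addresses this by observing $A\subset\intcc{-C\eps}{C\eps}$ (since $x$ is away from the singular set) and then shrinking $\eps$; and (2) the approximation $1-\Cmod{\xi}^2\approx4e^{-R'}$ carries a bounded multiplicative error that you should track explicitly rather than only gesturing at — this is precisely what the slack between the factors $8c$ and $16c$ is meant to absorb, as you correctly anticipate.
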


\begin{proof}Let $y\in B_R(x,\eps_0)$, $\eps$ be sufficiently small to determine and $\theta_1,\theta_2\in A$. We have to show that $\Cmod{\theta_1-\theta_2}\in\cup_{i=0}^N\intcc{\eps_{i,1}}{\eps_{i,2}}$. Since $\theta_1,\theta_2\in A$, we have
  \begin{equation}\label{eqtheta12}\dhimps{\adhDR{R}}{\phi_y\circ r_{\theta_1}}{\phi_y\circ r_{\theta_2}}=\dhimps{\adhDR{R}}{\phi_y\circ r_{\Cmod{\theta_1-\theta_2}}}{\phi_y}<2\eps.\end{equation}
  Denote by $\theta=\Cmod{\theta_1-\theta_2}$. With the notations of Lemma~\ref{lemdefeps0}, set $\eps_2<\eps_0$, $\eps_1=\frac{1}{16}\eps_0$ sufficiently small, and $\eps=\frac{1}{2}c^{-1}\eps_1$. Fix $\phi_y$ a uniformization of $\leafu{y}$ with $\dhimps{\adhDR{R}}{\phi_x}{\phi_y}<\eps_0$. By definition, it is clear that $\eps_0<\frac{\rho_1}{2}$. Hence, if $\xi\in\adhDR{R}$ satisfies $\dhimpsing{E}{\phi_x(\xi)}>\rho_1$, then $\dhimpsing{E}{\phi_y(\xi)}>\frac{\rho_1}{2}$. Take $\xi\in\set{D}$ such that
  \[1-\Cmod{\xi}^2=8\eps_2^{-1}\Cmod{\sin(\theta/2)}=\frac{1}{2}\eps_1^{-1}\Cmod{\sin(\theta/2)}.\]
  Since $y$ is still far from the singular set, it is clear by Lemma~\ref{lemdefeps0} that $A\subset\intcc{-C\eps}{C\eps}$, for some constant $C>1$. Shrinking~$\eps$ if necessary, we obtain that $\Cmod{\xi}\geq\frac{1}{4}$. By Lemma~\ref{distrotid},
  \[2c\eps=\eps_1\leq\dPCp{\xi}{e^{i\theta}\xi}\leq\eps_2<\eps_0.\]
  Hence,~\eqref{eqtheta12} together with Lemma~\ref{lemdefeps0} imply that either $\dPC{0}{\xi}>R$, or $\phi_y(\xi)\in\frac{1}{2}U_1^{\sing}$. This should hold for every~$\xi$ with same modulus, so we deduce by Lemma~\ref{lemRj12}\eqref{Rjsing1} that $\dPC{0}{\xi}\in\cup_{i=0}^N\intoo{R_{i,1}}{R_{i,2}}$. Therefore, $1-\Cmod{\xi}^2\in\cup_{i=0}^N\intoo{2e^{-R_{i,2}}}{4e^{-R_{i,1}}}$ and
  \[\sin(\theta/2)\in\bigcup_{i=0}^N\intoo{8c\eps e^{-R_{i,1}}}{16c\eps e^{-R_{i,2}}}.\]
  This concludes the proof.
\end{proof}

\begin{proof}[End of proof of Lemma~\ref{estimLebtheta}] For the upper bound, we apply Lemmas~\ref{ensthetapasprescrits} and~\ref{mesepspresc}. Note that $\eps_{i,2}<\frac{\eps_{i-1,1}}{2}$, because $R_{i,2}-R_{i-1,1}>4$ in Lemma~\ref{lemRj12}. We obtain
  \[\Leb(A)\leq 2\eps_{N,2}3^N\prod\limits_{i=0}^{N-1}\frac{\eps_{i,2}}{\eps_{i,1}}\leq 32c\eps e^{-R}e^{R-R_{N,2}}12^N\prod\limits_{i=0}^{N-1}e^{R_{i,1}-R_{i,2}}.\]
  The last statement of Lemma~\ref{lemRj12} now gives
  \[\Leb(A)\leq 32c\eps e^{\delta(3\log(12)+12)R}e^{-R}.\]
  Since~$\delta$ was chosen arbitrarily, we deduce the upper bound wanted. For the lower bound, we will use notations of Lemma~\ref{distrotid}. Recall also that $c_0>0$ is such that $\eta\leq c_0$ on $\manis{M}{E}$. If $\theta$ satisfies $\Cmod{\sin(\theta/2)}\leq c_0^{-1}\frac{\eps}{4}e^{-R}$ and $\xi\in\adhDR{R}$, then $\frac{1-\Cmod{\xi}^2}{2\Cmod{\xi}\Cmod{\sin(\theta/2)}}\geq\frac{4c_0}{\eps}$. Thus,
  \[\dPCp{\xi}{e^{i\theta}\xi}=f\left(\frac{1-\Cmod{\xi}^2}{2\Cmod{\xi}\Cmod{\sin(\theta/2)}}\right)\leq f\left(\frac{4}{c_0\eps}\right)\leq c_0^{-1}\eps,\]
  where~$f$ is defined in Lemma~\ref{distrotid}. Since~$\xi$ is arbitrary in~$\adhDR{R}$, we get that $\theta\in A$. It follows that $\Leb(A)\geq c_0^{-1}\frac{\eps}{2}e^{-R}$. Such an estimate can also be thought as Lemma~\ref{distrotid} for $\eps_2=c_0^{-1}\eps$ and $\eps_1=0$.
\end{proof}

We argue similarly for Lemma~\ref{estimLebzeta}, with analogous steps. But first, let us see how to compose the automorphisms of the disk.

\begin{lem}\label{compzeta12theta12} Let $\zeta_1,\zeta_2\in\Aut(\set{D})$ and $\theta_1,\theta_2\in\set{R}$. If $\zeta_1,\zeta_2$ are sufficiently small, there exist $\zeta\in\set{D}$ and $\theta\in\set{R}$, with
  \[\frac{1}{2}\norm{(\zeta_1-\zeta_2,\theta_1-\theta_2)}_{\infty}\leq\norm{(\zeta,\theta)}_{\infty}\leq3\norm{(\zeta_1-\zeta_2,\theta_1-\theta_2)}_{\infty},\]
  satisfying
  \[\left(\tau_{\zeta_1}\circ r_{\theta_1}\right)^{-1}\circ\left(\tau_{\zeta_2}\circ r_{\theta_2}\right)=\tau_{\zeta}\circ r_{\theta}.\]
  Here, $\norm{(\zeta,\theta)}_{\infty}=\max\left(\Cmod{\zeta},\Cmod{\theta}\right)$.
\end{lem}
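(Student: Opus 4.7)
\medskip
\noindent\textbf{Proof proposal.} The plan is to compute $(\tau_{\zeta_1}\circ r_{\theta_1})^{-1}\circ(\tau_{\zeta_2}\circ r_{\theta_2})$ explicitly using the group structure of $\Aut(\set{D})$, to identify the unique pair $(\zeta,\theta)$ such that this composition equals $\tau_\zeta\circ r_\theta$, and then to estimate $\Cmod{\zeta}$ and $\Cmod{\theta}$ in terms of $\zeta_1-\zeta_2$ and $\theta_1-\theta_2$.

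First I would record two elementary identities. Since $\tau_\zeta^{-1}=\tau_{-\zeta}$ and $r_\theta^{-1}=r_{-\theta}$, a direct computation from the formula $\tau_\zeta(\xi)=\frac{\xi+\zeta}{1+\cjg{\zeta}\xi}$ gives the commutation rule $r_\theta\circ\tau_\zeta=\tau_{e^{i\theta}\zeta}\circ r_\theta$. A standard calculation also yields
\[\tau_{-\zeta_1}\circ\tau_{\zeta_2}(\xi)=\frac{(1-\zeta_1\cjg{\zeta_2})\xi+(\zeta_2-\zeta_1)}{(1-\cjg{\zeta_1}\zeta_2)+(\cjg{\zeta_2}-\cjg{\zeta_1})\xi}=\frac{\alpha}{\cjg{\alpha}}\tau_{\beta/\alpha}(\xi)=r_{\theta'}\circ\tau_{\beta/\alpha}(\xi),\]
with $\alpha:=1-\zeta_1\cjg{\zeta_2}$, $\beta:=\zeta_2-\zeta_1$, and $\theta':=2\arg\alpha$. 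Inserting these into $(\tau_{\zeta_1}\circ r_{\theta_1})^{-1}\circ(\tau_{\zeta_2}\circ r_{\theta_2})=r_{-\theta_1}\circ r_{\theta'}\circ\tau_{\beta/\alpha}\circ r_{\theta_2}$ and pushing the leftmost rotation past $\tau_{\beta/\alpha}$ via the commutation rule identifies
\[\zeta=e^{i(\theta'-\theta_1)}\frac{\zeta_2-\zeta_1}{1-\zeta_1\cjg{\zeta_2}},\qquad\theta=\theta_2-\theta_1+\theta'.\]

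It then remains to estimate $\Cmod{\zeta}$ and $\Cmod{\theta}$. Since $\Cmod{\zeta_j}<\eps$ small, we have $\Cmod{1-\zeta_1\cjg{\zeta_2}}=1+O(\eps^2)$, so $\Cmod{\zeta}/\Cmod{\zeta_2-\zeta_1}$ lies within a factor $1+O(\eps^2)$ of $1$. The crucial step is to bound $\theta'$ sharply: writing $\zeta_1\cjg{\zeta_2}=\Cmod{\zeta_1}^2+\zeta_1(\cjg{\zeta_2}-\cjg{\zeta_1})$ and using that $\Cmod{\zeta_1}^2$ is real gives
\[\Cmod{\Ima(\zeta_1\cjg{\zeta_2})}\leq\Cmod{\zeta_1}\Cmod{\zeta_2-\zeta_1},\]
whence $\Cmod{\theta'}=2\Cmod{\arg\alpha}\leq C\eps\Cmod{\zeta_2-\zeta_1}$ for $\eps$ small. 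Choosing $\eps$ small enough that $C\eps\leq\frac{1}{2}$, a short case analysis on whether $\Cmod{\zeta_2-\zeta_1}$ or $\Cmod{\theta_2-\theta_1}$ is larger then yields the two-sided bound $\frac{1}{2}\norm{(\zeta_1-\zeta_2,\theta_1-\theta_2)}_\infty\leq\norm{(\zeta,\theta)}_\infty\leq 3\norm{(\zeta_1-\zeta_2,\theta_1-\theta_2)}_\infty$.

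The only delicate point, which is what the plan must really take care of, is the lower bound in the regime $\Cmod{\zeta_2-\zeta_1}\sim\Cmod{\theta_2-\theta_1}$: one has to rule out that the correction $\theta'$ could cancel $\theta_2-\theta_1$ in $\Cmod{\theta}$. This is exactly what the refined estimate $\Cmod{\theta'}\leq C\eps\Cmod{\zeta_2-\zeta_1}$ (rather than the cruder $\Cmod{\theta'}=O(\eps^2)$ one would obtain by bounding $\Cmod{\Ima(\zeta_1\cjg{\zeta_2})}$ by $\Cmod{\zeta_1}\Cmod{\zeta_2}$) takes care of, since it makes $\theta'$ small compared to each component of $(\zeta_1-\zeta_2,\theta_1-\theta_2)$.
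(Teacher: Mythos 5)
Your proof is correct and takes essentially the same route as the paper: both compute $\tau_{-\zeta_1}\circ\tau_{\zeta_2}$ explicitly, factor it as a rotation times a $\tau$, commute the external rotations through, and then prove the crucial cancellation-aware bound on the auxiliary rotation angle (you via $\Ima(\zeta_1\cjg{\zeta_2})=\Ima(\zeta_1(\cjg{\zeta_2}-\cjg{\zeta_1}))$, the paper via the equivalent $\Ima(\zeta_1\cjg{\zeta_2})=\Ima((\zeta_1-\zeta_2)\cjg{\zeta_2})$). You also correctly identify the one genuinely delicate point, namely that the naive $O(\eps^2)$ bound on $\theta'$ would not suffice for the lower bound, which is exactly why the paper writes the imaginary part in a form that exhibits the factor $\Cmod{\zeta_1-\zeta_2}$.
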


\begin{proof} This is just computation, that we partly leave to the reader. We have the following composition rules.
  \[r_{\theta}\circ\tau_{\zeta}=\tau_{e^{i\theta}\zeta}\circ r_{\theta},\qquad \tau_{\zeta_1}\circ\tau_{\zeta_2}=r_{\theta(\zeta_1,\zeta_2)}\circ\tau_{\zeta(\zeta_1,\zeta_2)},\]
  for $\zeta(\zeta_1,\zeta_2)=\tau_{\zeta_2}(\zeta_1)$ and $\theta(\zeta_1,\zeta_2)$ satisfying $e^{i\theta(\zeta_1,\zeta_2)}=\frac{1+\zeta_1\cjg{\zeta_2}}{1+\cjg{\zeta_1}\zeta_2}$. Playing with these rules, we get the form wanted, with $\Cmod{\zeta}=\Cmod{\tau_{\zeta_2}(-\zeta_1)}$. So indeed, $\dPC{0}{\zeta}=\dPC{\zeta_1}{\zeta_2}$ and $2\Cmod{\zeta_1-\zeta_2}\leq\Cmod{\zeta}\leq3\Cmod{\zeta_1-\zeta_2}$. Moreover, $\theta=\theta_2-\theta_1+\theta(-\zeta_1,\zeta_2)$. Therefore, we just need to show that $\Cmod{\theta(-\zeta_1,\zeta_2)}\leq\Cmod{\zeta_1-\zeta_2}$. We have
  \[\Cmod{1-e^{i\theta(-\zeta_1,\zeta_2)}}=\Cmod{\frac{2i\Im\left(\zeta_1\cjg{\zeta_2}\right)}{1+\cjg{\zeta_1}\zeta_2}}=\Cmod{\frac{2\Im\left((\zeta_1-\zeta_2)\cjg{\zeta_2}\right)}{1+\cjg{\zeta_1}\zeta_2}}\leq\frac{1}{2}\Cmod{\zeta_1-\zeta_2},\]
      if $\zeta_1$ and $\zeta_2$ are sufficiently small. Above, $\Im(z)$ denotes the imaginary part of a complex number~$z$. We get the estimate wanted.
    \end{proof}

    \begin{lem}\label{disttauzeta} There exist constants $C_1,C_2>1$ satisfying the following. Let $0<\eps_1<\eps_2$ be sufficiently small with $\eps_1\leq(C_1C_2)^{-1}\eps_2$, $\theta\in\intcc{-\pi}{\pi}$ and $\zeta\in\rD{r_0}$. There exists a subset $\Lambda\subset\intcc{-\pi}{\pi}$, with $\Leb(\Lambda)=\frac{\pi}{2}$, such that if $\xi=\rho e^{i\lambda}$, $\rho\geq\frac{1}{4}$, $\lambda\in\Lambda$ and
      \[C_2\eps_2^{-1}\norm{(\zeta,\theta)}_{\infty}\leq1-\Cmod{\xi}^2\leq C_1^{-1}\eps_1^{-1}\norm{(\zeta,\theta)}_{\infty},\]
      then $\dPCp{\xi}{\tau_{\zeta}\left(e^{i\theta}\xi\right)}\in\intcc{\eps_1}{\eps_2}$.
    \end{lem}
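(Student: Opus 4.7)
The plan is to compute $\dPCp{\xi}{\tau_{\zeta}(e^{i\theta}\xi)}=2\tanh^{-1}(|N|/|D|)$ with $N=\xi-\tau_{\zeta}(e^{i\theta}\xi)$ and $D=1-\cjg{\xi}\tau_{\zeta}(e^{i\theta}\xi)$, and then read both bounds off the explicit formula. Writing $\xi=\rho e^{i\lambda}$, $\zeta=se^{i\psi}$ and $\beta:=\psi-\lambda-\theta/2$, and expanding the products with half-angle identities as in the proof of Lemma~\ref{distrotid}, I expect
\[|N|^2=s^2(1-\rho^2)^2\cos^2\beta+\bigl(2\rho\sin(\theta/2)+s(1+\rho^2)\sin\beta\bigr)^2,\]
together with a similar formula for $|D|^2$ whose leading term is $(1-\rho^2)^2$ and whose correction is of order $t^2$, where $t:=\norm{(\zeta,\theta)}_{\infty}$. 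Under the hypothesis $C_2\eps_2^{-1}t\leq 1-\rho^2$, this correction is $O((1-\rho^2)^2/C_2^2)$, so $|D|=(1-\rho^2)(1+O(1/C_2))$ and $|N|/|D|$ stays small enough to replace $\tanh^{-1}$ by the identity up to a harmless factor.

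For the upper bound, both pieces of $|N|^2$ are easy: $s^2(1-\rho^2)^2\cos^2\beta\leq t^2(1-\rho^2)^2$ and $\bigl(2\rho\sin(\theta/2)+s(1+\rho^2)\sin\beta\bigr)^2\leq(|\theta|+2s)^2\leq 9t^2$. Using $t\leq\eps_2(1-\rho^2)/C_2$, this gives $|N|^2/|D|^2\leq 10\eps_2^2/C_2^2\leq\eps_2^2/4$ for $C_2$ large enough, hence the Poincar\'e distance is at most $\eps_2$. Crucially, this half of the lemma requires no restriction on $\lambda$, so $\Lambda$ is needed only for the lower bound.

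For the lower bound, the other hypothesis $1-\rho^2\leq C_1^{-1}\eps_1^{-1}t$ gives $t\geq C_1\eps_1(1-\rho^2)$, and I would split into two cases. If $|\theta|\geq 10s$, then $|2\rho\sin(\theta/2)|\geq|\theta|/(2\pi)\geq t/(2\pi)$ while $|s(1+\rho^2)\sin\beta|\leq 2s\leq|\theta|/5$, so $|2\rho\sin(\theta/2)+s(1+\rho^2)\sin\beta|\geq c\,t$ for a universal $c>0$, uniformly in $\lambda$; $\Lambda$ can then be any subset of $[-\pi,\pi]$ of measure $\pi/2$. If instead $|\theta|<10s$, so $t\leq 10s$, I rewrite the same quantity as $s\bigl(v_0+(1+\rho^2)\sin\beta\bigr)$ with $|v_0|\leq 10$, and choose a universal $c_0>0$ small enough that the set of $\lambda\in[-\pi,\pi]$ on which $|v_0+(1+\rho^2)\sin\beta|<c_0$ has measure at most $\pi/2$; this is possible because such an inequality pins $\sin\beta$ to an interval of length $2c_0/(1+\rho^2)$, whose preimage under $\sin$ has measure of order $c_0$ (or $\sqrt{c_0}$ near $\pm 1$). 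Defining $\Lambda$ as the complement, truncated to measure exactly $\pi/2$, I then get $|2\rho\sin(\theta/2)+s(1+\rho^2)\sin\beta|\geq c_0 s\geq(c_0/10)t$ on $\Lambda$. Combining the two cases with $|D|\leq 2(1-\rho^2)$ and $t\geq C_1\eps_1(1-\rho^2)$ yields $|N|/|D|\geq c'C_1\eps_1$ for some universal $c'>0$, so $\dPCp{\xi}{\tau_\zeta(e^{i\theta}\xi)}\geq\eps_1$ provided $C_1$ is chosen large enough.

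The main obstacle is the case $|\theta|<10s$, where $\zeta$ dominates and the rotation and translation contributions can in principle cancel exactly: the delicate step is the uniform-in-$(\theta,s,\rho)$ measure estimate showing that the bad set of $\lambda$ has measure $\leq\pi/2$. Once that is in hand, the hypothesis $\eps_1\leq(C_1C_2)^{-1}\eps_2$ is exactly what is needed to make the upper and lower bounds compatible and to close the argument.
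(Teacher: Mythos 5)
The approach shares the same skeleton as the paper's: compute $\dPCp{\xi}{\tau_\zeta(e^{i\theta}\xi)}=2\tanh^{-1}(|N|/|D|)$, show $|D|\asymp 1-\rho^2$ under the hypotheses, and bound $|N|$ above and below. Your formula for $|N|^2$ is correct (one checks that $N=e^{i\lambda}\bigl[s\cos\beta\,(1-\rho^2)+i\bigl(2\rho\sin(\theta/2)+s(1+\rho^2)\sin\beta\bigr)\bigr]$ with $\beta=\psi-\lambda-\theta/2$). The upper bound and the treatment of $|D|$ are fine, modulo constants.

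Where you diverge from the paper, and where the difficulties lie, is the lower bound on $|N|$. The paper does not split into cases. It writes $\xi=\sigma i\frac{\zeta}{|\zeta|}e^{-i\theta/2}\rho e^{i\lambda}$ with $\sigma\sin(\theta/2)=-|\sin(\theta/2)|$; in your variables this amounts to centering $\Lambda$ at a specific angle depending only on $(\theta,\zeta)$, so that on $|\lambda|\leq\pi/4$ \emph{all three} summands of the numerator have non-negative real part. The rotation and translation contributions then add constructively, one gets $|N|\geq\frac{\sqrt2}{4\pi}\norm{(\zeta,\theta)}_\infty$ with no cancellation to rule out, no case distinction, and---crucially---a set $\Lambda$ that manifestly does not depend on $\rho$.

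Your argument has two concrete problems. First, a numerical slip: with $\rho\geq\tfrac14$ you only get $|2\rho\sin(\theta/2)|\geq|\theta|/(2\pi)\approx 0.159\,|\theta|$, while $|s(1+\rho^2)\sin\beta|\leq 2s\leq|\theta|/5=0.2\,|\theta|$ when $|\theta|\geq 10s$, so the triangle inequality gives a \emph{negative} lower bound. The threshold needs to be raised past $4\pi s$ for the first case to close; this is easily repaired.

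The second problem is the one you yourself flag as ``the delicate step,'' and it is genuine. In the case where $\zeta$ dominates, you define $\Lambda$ as the complement of $\{\lambda:|v_0+(1+\rho^2)\sin\beta|<c_0\}$ with $v_0=2\rho\sin(\theta/2)/s$. But $v_0$ (and $1+\rho^2$) depend on $\rho$, and the lemma quantifies $\Lambda$ \emph{before} $\rho$: the constraint $C_2\eps_2^{-1}\norm{(\zeta,\theta)}_\infty\leq 1-\rho^2\leq C_1^{-1}\eps_1^{-1}\norm{(\zeta,\theta)}_\infty$ allows a whole range of $\rho$ when $\eps_1<(C_1C_2)^{-1}\eps_2$. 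To repair this you would have to bound the measure of the \emph{union} over admissible $\rho$ of the bad sets. As $\rho$ ranges over $[1/4,1)$, $\frac{2\rho}{1+\rho^2}$ sweeps $[8/17,1)$, so the bad $\sin\beta$-interval drifts by a length comparable to $\frac{9}{17}\,\frac{|\sin(\theta/2)|}{s}$, and one has to check this swept interval (intersected with $[-1,1]$) still has $\sin$-preimage of measure strictly less than $3\pi/2$. That is a real computation you do not carry out, and without it the proof is incomplete. The paper's choice of a ``good direction'' for $\xi$ avoids this entirely, which is why it is the cleaner route.

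In short: same starting formula, different mechanism for the lower bound. Your mechanism is repairable, but as written it has an arithmetic error in the threshold and, more seriously, produces a $\rho$-dependent $\Lambda$, which the statement does not permit and which you acknowledge without resolving.
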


    \begin{proof} First, let us compute a formula for the distance
      \[\begin{aligned}\dPCp{\xi}{\tau_{\zeta}\left(e^{i\theta}\xi\right)}&=2\tanh^{-1}\Cmod{\left(\xi-\frac{\zeta+e^{i\theta}\xi}{1+\cjg{\zeta}e^{i\theta}\xi}\right)\left(1-\cjg{\xi}\frac{\zeta+e^{i\theta}\xi}{1+\cjg{\zeta}e^{i\theta}\xi}\right)^{-1}}\\
          &=2\tanh^{-1}\Cmod{\frac{\xi(1-e^{i\theta})-\zeta+\cjg{\zeta}e^{i\theta}\xi^2}{1-e^{i\theta}+e^{i\theta}\left(1-\Cmod{\xi}^2\right)+\cjg{\zeta}e^{i\theta}\xi-\zeta\cjg{\xi}}}\\
          &=2\tanh^{-1}\Cmod{\frac{2i\xi\sin(\theta/2)+\zeta e^{-i\theta/2}-\cjg{\zeta}e^{i\theta/2}\xi^2}{2i\left(\sin(\theta/2)+\Im\left(\zeta\cjg{\xi}e^{-i\theta/2}\right)\right)-e^{i\theta/2}\left(1-\Cmod{\xi}^2\right)}}.\end{aligned}\]
      Now, take $\xi=\sigma i\frac{\zeta}{\Cmod{\zeta}}e^{-i\theta/2}\rho e^{i\lambda}$, with $\sigma=\pm1$ chosen such that $\sigma\sin(\theta/2)=-\Cmod{\sin(\theta/2)}$. If $\zeta=0$, then any complex number of modulus~$1$ can replace $\frac{\zeta}{\Cmod{\zeta}}$. The above formula becomes
      \[\dPCp{\xi}{\tau_{\zeta}\left(e^{i\theta}\xi\right)}=2\tanh^{-1}\Cmod{\frac{2\rho e^{i\lambda}\Cmod{\sin(\theta/2)}+\Cmod{\zeta}\left(1+\rho^2e^{2i\lambda}\right)}{2i\left(\sin(\theta/2)-\sigma\rho\Cmod{\zeta}\cos(\lambda)\right)-e^{i\theta/2}\left(1-\rho^2\right)}}.\]
      Next, suppose that $\Cmod{\lambda}\leq\frac{\pi}{4}$. This gives indeed a subset $\Lambda$ of angles, of Lebesgue measure~$\frac{\pi}{2}$. It should be noted that the three terms in the numerator have non-negative real part. Moreover, if $\rho$ satisfies the conditions of the lemma with $\eps_1$ and $\eps_2$ sufficiently small, it is clear that the term $1-\rho^2$ in the denominator dominates all the others. Therefore,
      \[\begin{aligned}2\tanh^{-1}\left(\frac{\sqrt{2}/(4\pi)\norm{(\zeta,\theta)}_{\infty}}{2C_1^{-1}\eps_1^{-1}\norm{(\zeta,\theta)}_{\infty}}\right)&\leq\dPCp{\xi}{\tau_{\zeta}\left(e^{i\theta}\xi\right)}\leq2\tanh^{-1}\left(\frac{3\norm{(\zeta,\theta)}_{\infty}}{C_2\eps_2^{-1}/2\norm{(\zeta,\theta)}_{\infty}}\right)\\
          \eps_1=\frac{\sqrt{2}}{8\pi}C_1\eps_1&\leq\dPCp{\xi}{\tau_{\zeta}\left(e^{i\theta}\xi\right)}\leq24C_2^{-1}\eps_2=\eps_2,\end{aligned}\]
      for $\eps_1,\eps_2$ sufficiently small. Here we have defined the constants $C_1,C_2$ to get the extreme identities. The lemma is proven.
    \end{proof}

    \begin{lem}\label{zetapasprescrits}Take the notations of Lemmas~\ref{lemdefeps0} and~\ref{lemRj12} and let $\eps>0$ be sufficiently small, $R>0$ be sufficiently large. Define $\eps_{i,1}=\frac{4}{3}cC_1\eps e^{-R_{i,1}}$ and $\eps_{i,2}=16cC_1\eps e^{-R_{i,2}}$, for $i\in\intent{0}{N}$. Note that $\eps_{N,1}=0$. Then, for $y\in B_R(x,\eps_0)$,
  \[A=\left\{(\zeta,\theta)\in\rD{r_0}\times\intcc{-\pi}{\pi};~\dhimps{\adhDR{R}}{\phi_y\circ\tau_{\zeta}\circ r_{\theta}}{\phi_y}<\eps\right\}\]
  has prescribed $(\eps_{i,j})_{i,j}$-steps.
\end{lem}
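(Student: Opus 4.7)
The plan is to adapt the proof of Lemma~\ref{ensthetapasprescrits} with $r_\theta$ replaced by a general element $\tau_\zeta\circ r_\theta$ of $\Aut(\set{D})$, using Lemma~\ref{compzeta12theta12} to handle the non-commutativity and Lemma~\ref{disttauzeta} in place of Lemma~\ref{distrotid}. Given $(\zeta_1,\theta_1),(\zeta_2,\theta_2)\in A$ and writing $\alpha_i=\tau_{\zeta_i}\circ r_{\theta_i}$, the triangle inequality yields $\dhimps{\adhDR{R}}{\phi_y\circ\alpha_1}{\phi_y\circ\alpha_2}<2\eps$. Lemma~\ref{compzeta12theta12} then lets us write $\alpha_1^{-1}\circ\alpha_2=\tau_\zeta\circ r_\theta$ with $\norm{(\zeta,\theta)}_\infty$ comparable to $\norm{(\zeta_1-\zeta_2,\theta_1-\theta_2)}_\infty$. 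Setting $\wt{\phi}_y=\phi_y\circ\alpha_1$, this rewrites as $\dhimps{\adhDR{R}}{\wt{\phi}_y\circ\tau_\zeta\circ r_\theta}{\wt{\phi}_y}<2\eps$.

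The key observation, replacing the rotation-invariance trick in Lemma~\ref{ensthetapasprescrits}, is that $\wt{\phi}_y$ is itself close to $\phi_x$ on $\adhDR{R}$: since $(\zeta_1,\theta_1)\in A$, $\dhimps{\adhDR{R}}{\wt{\phi}_y}{\phi_y}<\eps$, and combining with $\dhimps{\adhDR{R}}{\phi_x}{\phi_y}<\eps_0$, we obtain $\dhimps{\adhDR{R}}{\wt{\phi}_y}{\phi_x}<\eps+\eps_0<\rho_1/2$ once $\eps$ is small enough. Now apply Lemma~\ref{disttauzeta} to the pair $(\zeta,\theta)$: for the admissible choice $\eps_1=2c\eps$ and $\eps_2\leq\eps_0$ with $\eps_1\leq(C_1C_2)^{-1}\eps_2$, there is a measure-$\pi/2$ subset $\Lambda\subset\intcc{-\pi}{\pi}$ and a specific range of moduli so that for every $\xi=\sigma i(\zeta/\Cmod{\zeta})e^{-i\theta/2}\rho e^{i\lambda}$ with $\lambda\in\Lambda$, one has $\dPCp{\xi}{\tau_\zeta(r_\theta(\xi))}\in\intcc{\eps_1}{\eps_2}$. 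Since $\wt{\phi}_y$ is a local Poincar\'{e} isometry, the same bracket holds for $\dPCp{\wt{\phi}_y(\xi)}{\wt{\phi}_y(\tau_\zeta(r_\theta(\xi)))}$; if additionally $\xi\in\adhDR{R}$ and $\wt{\phi}_y(\xi)\notin\tfrac{1}{2}U_1^{\sing}$, Lemma~\ref{lemdefeps0} forces $\dhimpnov\geq c^{-1}\eps_1=2\eps$, contradicting the Bowen bound. Hence $\wt{\phi}_y(\xi)\in\tfrac{1}{2}U_1^{\sing}$, and by the preceding closeness, $\phi_x(\xi)\in U_1^{\sing}$.

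From here the argument mirrors Lemma~\ref{ensthetapasprescrits}. If the hyperbolic radius $R'=\dPCp{0}{\rho}$ corresponding to the chosen $\rho$ satisfies $R'\leq R$, the above applies to every $\lambda\in\Lambda$, producing a subset of the circle $\partial\DR{R'}$ of Lebesgue measure $\pi/2>\pi/3$ on which $\phi_x$ lands in $U_1^{\sing}$; hence $R'\in I_1^{\sing}$ and, by Lemma~\ref{lemRj12}\eqref{Rjsing1}, $R'\in\bigcup_{i=0}^{N-1}\intoo{R_{i,2}}{R_{i,1}}$. The range $1-\Cmod{\xi}^2\in\intcc{C_2\eps_2^{-1}\norm{(\zeta,\theta)}_\infty}{C_1^{-1}\eps_1^{-1}\norm{(\zeta,\theta)}_\infty}$ from Lemma~\ref{disttauzeta}, together with $1-\rho^2\sim 4e^{-R'}$, translates $R'\in\intoo{R_{i,2}}{R_{i,1}}$ into a window for $\norm{(\zeta,\theta)}_\infty$; passing back to $\norm{(\zeta_1-\zeta_2,\theta_1-\theta_2)}_\infty$ via the comparison factors $\tfrac{1}{2}$ and $3$ of Lemma~\ref{compzeta12theta12} produces the interval $\intcc{\eps_{i,1}}{\eps_{i,2}}$ announced in the statement. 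The residual regime, in which no valid $\xi$ sits in $\adhDR{R}$ because the whole allowed range of $R'$ exceeds $R$, corresponds to the last interval $\intcc{0}{\eps_{N,2}}$ with $\eps_{N,1}=0$.

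The main obstacle will be booking the constants $c$, $C_1$, $C_2$, $r_0$ cleanly through the two chained lemmas so that the final intervals exactly match $\eps_{i,1}=\tfrac{4}{3}cC_1\eps e^{-R_{i,1}}$ and $\eps_{i,2}=16cC_1\eps e^{-R_{i,2}}$; in particular one must verify that $\eps_2$ can be chosen $\leq\eps_0$ while preserving the admissibility $\eps_1\leq(C_1C_2)^{-1}\eps_2$ of Lemma~\ref{disttauzeta}. Once this bookkeeping is in place, the genuinely new ingredient compared with the rotation case is simply that the angular witness uses only the measure-$\pi/2$ subset $\Lambda$ rather than the whole circle, which nonetheless still exceeds the threshold $\pi/3$ defining $I_1^{\sing}$.
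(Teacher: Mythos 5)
Your proof is correct and follows essentially the same route as the paper: reduce, via the composition Lemma~\ref{compzeta12theta12}, to controlling a single $\tau_\zeta\circ r_\theta$; invoke Lemma~\ref{disttauzeta} to produce a measure-$\pi/2$ arc $\Lambda$ on a circle $\partial\DR{R'}$ where the Poincar\'{e} displacement sits in $\intcc{\eps_1}{\eps_2}$; use Lemma~\ref{lemdefeps0} to force those points into $\tfrac{1}{2}U_1^{\sing}$; compare to $\phi_x$ to conclude $R'\in I_1^{\sing}$; and read off the prescribed steps from Lemma~\ref{lemRj12}\eqref{Rjsing1}.

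The one place where your handling is a genuine (and slightly cleaner) variation: to pass from $d_{\adhDR{R}}(\phi_y\circ\alpha_1,\phi_y\circ\alpha_2)<2\eps$ to a statement about $\tau_\zeta\circ r_\theta$, the paper shrinks the radius to $R-3\Cmod{\zeta_1}$ and writes the comparison against $\phi_y$ itself, whereas you keep the full radius $R$ and replace the base parametrization by $\wt{\phi}_y=\phi_y\circ\alpha_1$; since $\phi_y\circ\alpha_2=\wt{\phi}_y\circ\tau_\zeta\circ r_\theta$, this rewriting is exact, with no conjugation or radius loss, and the observation that $\wt{\phi}_y$ is $(\eps+\eps_0)$-close to $\phi_x$ on $\adhDR{R}$ makes the "$\phi_x$ lands in $U_1^{\sing}$" step transparent. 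You do leave the final constant bookkeeping (deriving $\eps_{i,1}=\tfrac{4}{3}cC_1\eps e^{-R_{i,1}}$ and $\eps_{i,2}=16cC_1\eps e^{-R_{i,2}}$) implicit, which the paper carries out explicitly; that part is routine arithmetic from $1-\rho^2\in\intoo{2e^{-R_{i,1}}}{4e^{-R_{i,2}}}$, $\eps_1=2c\eps$, and the factors $\tfrac{1}{2}$, $3$ of Lemma~\ref{compzeta12theta12}, so it is not a conceptual gap.
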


\begin{proof} We argue similarly to Lemma~\ref{ensthetapasprescrits}. Fix $\eps_2<\eps_0$ sufficiently small, $\eps_1=(C_1C_2)^{-1}\eps_2$ and $\eps=(2c)^{-1}\eps_1$. Take $(\zeta_1,\theta_1),(\zeta_2,\theta_2)\in A$ and $(\zeta,\theta)$ given by Lemma~\ref{compzeta12theta12}. That way,
  \[2\eps>\dPCs{\adhDR{R}}{\phi_y\circ\tau_{\zeta_1}\circ r_{\theta_1}}{\phi_y\circ\tau_{\zeta_2}\circ r_{\theta_2}}\geq\dPCs{\adhDR{R-3\Cmod{\zeta_1}}}{\phi_y\circ\tau_{\zeta}\circ r_{\theta}}{\phi_y}.\]
  Finally, take $\xi=\rho e^{i\lambda}$, with $\lambda\in\Lambda$ given by Lemma~\ref{disttauzeta} and
  \[1-\rho^2=C_1^{-1}\eps_1^{-1}\norm{(\zeta,\theta)}_{\infty}=C_2\eps_2^{-1}\norm{(\zeta,\theta)}_{\infty}.\]
  As in Lemma~\ref{ensthetapasprescrits}, because $\Leb(\Lambda)=\frac{\pi}{2}$, we obtain $1-\rho^2\in\cup_{i=0}^N\intoo{2e^{-R_i,1}}{4e^{-R_{i,2}}}$. Here, we work with Lemma~\ref{lemRj12} for $R-3\Cmod{\zeta_1}$ but this does not change anything to the estimates. This gives $\norm{(\zeta,\theta)}_{\infty}\in\intoo{4cC_1\eps e^{-R_{i,1}}}{8c\eps C_1 e^{-R_{i,2}}}$. Lemma~\ref{compzeta12theta12} then implies
    \[\norm{(\zeta_1-\zeta_2,\theta_1-\theta_2)}_{\infty}\in\bigcup\limits_{i=0}^N\intoo{\frac{4}{3}cC_1\eps e^{-R_{i,1}}}{16c C_1\eps e^{-R_{i,2}}}.\qedhere\]
  \end{proof}

  \begin{proof}[End of proof of Lemma~\ref{estimLebzeta}] The proof is essentially the same as of Lemma~\ref{estimLebtheta}. Note that $\eps_{i,2}<\frac{\eps_{i-1,1}}{2}$ because $R_{i,2}-R_{i-1,1}>4$. By Lemmas~\ref{mesepspresc} and~\ref{zetapasprescrits}, there exist universal constants $C,C'$ such that the Lebesgue measure of the Borel set involved is lower than $C\eps^3e^{(C'\delta-3)R}$. For the lower bound, the computation of Lemma~\ref{disttauzeta} easily shows that for $\norm{(\zeta,\theta)}_{\infty}\leq C\eps e^{-R}$ and $\xi\in\adhDR{R}$, $\dPCp{\xi}{\tau_{\zeta}\left(e^{i\theta}\xi\right)}\leq c_0^{-1}\eps$. We conclude the same using that~$\fol$ is Brody-hyperbolic.
  \end{proof}

\end{document}